\documentclass[11pt,a4paper]{article}
\setlength{\topmargin}{0.0in}
\setlength{\oddsidemargin}{0.33in}
\setlength{\textheight}{9.0in}
\setlength{\textwidth}{6.0in}

\usepackage{todonotes}

\newcommand{\vect}[1]{\boldsymbol{#1}}
\newcommand{\vectt}[1]{\boldsymbol{\mathbf{#1}}}
\newcommand{\tens}[1]{\pmb{\mathsf{#1}}}
\newcommand{\dx}{\mathrm{d}x}
\newcommand{\dy}{\mathrm{d}y}

\newcommand{\supp}{\mathrm{supp}}

\newcommand{\Hil}{\mathcal{H}}
\newcommand{\rvline}{\hspace*{-\arraycolsep}\vline\hspace*{-\arraycolsep}}

\newcommand{\eT}{\tilde{T}}
\newcommand{\eU}{\tilde{U}}
\newcommand{\Sk}{\vectt{S}^{\vect{I}}_{\vect{n}}}
\newcommand{\Skp}{\vectt{S}^{\vect{I},+}_{\vect{n}}}
\newcommand{\Skd}{\vectt{S}^{\vect{I},*}_{\vect{n}}}
\newcommand{\fdx}{\frac{\mathrm{d}}{\dx}}
\newcommand{\sgn}{\mathrm{sgn}}
\newcommand{\oFo}{{_1}F_1}
\newcommand{\mi}{\mathrm{i}}
\newcommand{\me}{\mathrm{e}}
\newcommand{\I}{\mathrm{i}}
\newcommand{\E}{\mathrm{e}}
\newcommand{\hT}{\hat{T}}
\newcommand{\hW}{\hat{W}}

\def\Xint#1{\mathchoice
{\XXint\displaystyle\textstyle{#1}}%
{\XXint\textstyle\scriptstyle{#1}}%
{\XXint\scriptstyle\scriptscriptstyle{#1}}%
{\XXint\scriptscriptstyle\scriptscriptstyle{#1}}%
\!\int}
\def\XXint#1#2#3{{\setbox0=\hbox{$#1{#2#3}{\int}$}
\vcenter{\hbox{$#2#3$}}\kern-.5\wd0}}

\def\dashint{\Xint-}

\baselineskip=20pt plus1pt

\usepackage{color}
\definecolor{deepblue}{rgb}{0,0,0.5}
\definecolor{deepred}{rgb}{0.6,0,0}
\definecolor{deepgreen}{rgb}{0,0.5,0}

\usepackage{amsmath,amssymb}
\usepackage{bm}
\usepackage{amsthm}
\usepackage{anyfontsize}
\usepackage{url}
\usepackage{graphicx}
\usepackage{array}
\usepackage{varwidth}
\usepackage{geometry}
\usepackage{ esint }
\definecolor{ao(english)}{rgb}{0.0, 0.5, 0.0}
\usepackage{listings}
\usepackage{fancyhdr}
\usepackage{enumitem}
\usepackage[
    backend=bibtex,
   bibstyle=ieee,
    citestyle=numeric-comp,
    natbib=true,
    sortlocale=en_GB,
    sorting = nyt,
    url=false,
    doi=true,
    arxiv=true,
    eprint=true
]{biblatex}
\usepackage{caption}
\usepackage{mathtools}
\usepackage{subfig}
\usepackage{appendix}
\usepackage{tikz-cd} 
\usepackage{easybmat}
\usepackage{hyperref}
\usepackage{xcolor}
\hypersetup{
    colorlinks,
    linkcolor={blue!50!black},
    citecolor={blue!50!black},
    urlcolor={blue!80!black}
}

\RequirePackage{algorithm}[0.1]

\RequirePackage[capitalize,nameinlink]{cleveref}[0.19]
\crefformat{equation}{\textup{#2(#1)#3}}
\crefrangeformat{equation}{\textup{#3(#1)#4--#5(#2)#6}}
\crefmultiformat{equation}{\textup{#2(#1)#3}}{ and \textup{#2(#1)#3}}
{, \textup{#2(#1)#3}}{, and \textup{#2(#1)#3}}
\crefrangemultiformat{equation}{\textup{#3(#1)#4--#5(#2)#6}}%
{ and \textup{#3(#1)#4--#5(#2)#6}}{, \textup{#3(#1)#4--#5(#2)#6}}{, and \textup{#3(#1)#4--#5(#2)#6}}

\Crefformat{equation}{#2Equation~\textup{(#1)}#3}
\Crefrangeformat{equation}{Equations~\textup{#3(#1)#4--#5(#2)#6}}
\Crefmultiformat{equation}{Equations~\textup{#2(#1)#3}}{ and \textup{#2(#1)#3}}
{, \textup{#2(#1)#3}}{, and \textup{#2(#1)#3}}
\Crefrangemultiformat{equation}{Equations~\textup{#3(#1)#4--#5(#2)#6}}%
{ and \textup{#3(#1)#4--#5(#2)#6}}{, \textup{#3(#1)#4--#5(#2)#6}}{, and \textup{#3(#1)#4--#5(#2)#6}}

\usepackage{algpseudocode}
\algnewcommand{\Initialize}[1]{
  \State \textbf{Initialize:}
  \Statex \hspace*{\algorithmicindent}\parbox[t]{.8\linewidth}{\raggedright #1}
}
\algnewcommand{\Indent}[2]{
  \State {#1}
  \vspace{-2mm}
  \Statex \hspace*{\algorithmicindent}\parbox[t]{.9\linewidth}{\raggedright #2}
}

\addbibresource{references.bib}

\newtheorem{proposition}{Proposition}[section]
\newtheorem{theorem}{Theorem}[section]
\newtheorem{lemma}{Lemma}[section]
\newtheorem{definition}{Definition}[section]
\newtheorem{corollary}{Corollary}[section]
\newtheorem{remark}{Remark}[section]
\setcounter{tocdepth}{2}%
\numberwithin{equation}{section}

\title{{A sparse spectral method for fractional differential equations in one-spatial dimension}}

\author{Ioannis P.~A.~Papadopoulos\thanks{Department of Mathematics, Imperial College London, UK. \newline \hspace*{5.5mm} \tt{ioannis.papadopoulos13@imperial.ac.uk},} \and Sheehan Olver\thanks{Department of Mathematics, Imperial College London, UK, {\tt s.olver@imperial.ac.uk}.  \newline
\hspace*{4.7mm} \textbf{Funding:} This work was completed with the support of the EPSRC grant EP/T022132/1 ``Spectral element methods for fractional differential equations, with applications in applied analysis and medical imaging" and the Leverhulme Trust Research Project Grant RPG-2019-144 ``Constructive approximation theory on and inside algebraic curves and surfaces". IP was also supported by the Deutsche Forschungsgemeinschaft (DFG, German Research Foundation) under Germany's Excellence Strategy -- The Berlin Mathematics Research Center MATH+ (EXC-2046/1, project ID: 390685689).}\footnotemark[2]}

\begin{document}
\maketitle
\date
\thispagestyle{empty}
\pagestyle{fancy}
\lhead{Ioannis Papadopoulos, Sheehan Olver}

\begin{abstract}
We develop a sparse spectral method for a class of fractional differential equations, posed on $\mathbb{R}$, in one dimension. These equations may include sqrt-Laplacian, Hilbert, derivative, and identity terms. The numerical method utilizes a basis consisting of weighted Chebyshev polynomials of the second kind in conjunction with their Hilbert transforms. The former functions are  supported on $[-1,1]$ whereas the latter have global support. The global approximation space may contain different affine transformations of the basis, mapping $[-1,1]$ to other intervals. Remarkably, not only are the induced linear systems sparse, but the  operator decouples across the different affine transformations. Hence, the solve reduces to solving $K$ independent sparse linear systems of size $\mathcal{O}(n)\times \mathcal{O}(n)$, with $\mathcal{O}(n)$ nonzero entries, where $K$ is the number of different intervals and $n$ is the highest polynomial degree contained in the sum space. This results in an $\mathcal{O}(n)$ complexity solve. Applications to fractional heat and wave equations are considered.
\end{abstract}

\section{Introduction}
\label{sec:introduction}

In this work, we develop a spectral method,  detailed in \cref{alg:spectral-method},  to solve equations of the form
\begin{align}
\mathcal{L}_{\lambda, \mu, \eta}[u] \coloneqq (\lambda \mathcal{I} + \mu \Hil + \eta \fdx+ (-\Delta)^{1/2})[u] = f,
\label{eq:fpde}
\end{align}
in one dimension where $\lambda, \mu, \eta \in \mathbb{R}$ are known constants, $\Hil$ is the Hilbert transform and $(-\Delta)^{1/2}$ is the sqrt-Laplacian, as defined in the next section. The domain is the whole real line $\mathbb{R}$. We seek a solution such that:
\begin{align}
\lim_{|x| \to \infty} u(x) = 0.
\end{align}

Although in this work we only consider the sqrt-Laplacian $(-\Delta)^{1/2}$, the method may generalize to fractional rational powers $(-\Delta)^s$, $s \in (0,1) \cap \mathbb{Q}$. This would be achieved via weighted Jacobi polynomials and their fractional Laplacian analogues \cite[Tab.~5, Row $1^*$]{Gutleb2023} combined with the techniques developed by Hale and Olver \cite{Hale2018} for fractional derivatives of a rational power of Riemann--Liouville and Caputo types. These techniques may further extend to two and three-dimensional problems via the formulae found in \cite[Tab.~7]{Gutleb2023}.

Equations of the form \cref{eq:fpde} arise in several contexts. Hilbert transforms occur in equations modelling interfacial turbulence of surface-tension driven Stokes flow \cite{Thess1995}.   The Hilbert transform is particularly useful in analyzing and processing signals, especially in the context of understanding their instantaneous frequency and phase relationships. By taking the derivative of the phase of the analytic signal with respect to time, one can obtain the instantaneous frequency, providing insight into how the frequency of the signal changes over time. The Hilbert transform has also been used in the Benjamin--Ono equation \cite{Benjamin1967}  as well as in the context of Riemann--Hilbert problems \cite{Trogdon2015}.  The sqrt-Laplacian $(-\Delta)^{1/2}$ is an operator that represents nonlocal or long-range interactions in a physical system. Examples include
\begin{itemize}
\itemsep=0pt
\item anomalous diffusion and random walks: unlike the classical Laplacian, the sqrt-Laplacian can capture more complex diffusion behaviour often characterized by power-law scaling and naturally arises as the continuous limit of discrete long jump random walks, cf.~\cite[Sec.~2.1.4]{Lischke2020}, \cite[Ch.~1]{pozrikidis2018fractional} and \cite{valdinoci2009long}.
\item nonlocal potential energy: the sqrt-Laplacian has appeared in the kinetic energy term of a Hamiltonian in the context of quantum mechanics where it models a nonlocal contribution to the kinetic energy of a particle \cite{laskin2000fractional,laskin2002fractional}.
\end{itemize}

When $\lambda = \eta = \mu =0$ and $\lambda = 1$, $\eta = \mu =0$ we recover the fractional Poisson and fractional (positive-definite) Helmholtz equations, respectively. Such equations have found many uses \cite{Antil2017, Du2019, Hatano1998, Benson2000, Carmichael2015}. In particular the fractional Helmholtz equation arises after a backward Euler time discretization of the fractional heat equation
\begin{align}
\partial_t u(x,t) +  (-\Delta)^{1/2}[u](x,t) = f(x,t).
\end{align} 
In this case, the constant $\lambda = (\Delta t)^{-1}$ is the inverse of the time step size and $\mu= \eta = 0$. It also arises in power law absorption \cite{Treeby2010} or during the Newton linearization for nonlocal Burgers-type equations \cite{Baker1996, Cordoba2006}, which have applications for quasi-geostrophic equations \cite{Chae2005}, or fractional porous medium flow \cite{Caffarelli2016}.

Our approach is to approximate the function $u$ with a space of functions consisting of weighted Chebyshev polynomials of the second kind and their Hilbert transforms, the span of which we refer to as a {\it sum space}: it is a direct sum of the span of two bases. This approach is inspired by a similar method utilized by Hale and Olver for fractional integral and differential equations \cite{Hale2018}. As the Hilbert transform is an anti-involution, $\Hil[\Hil[u]] = -u$, the operator $(\lambda \mathcal{I} + \mathcal{H})$ maps the sum space to itself and the operator representation of $(\lambda \mathcal{I} + \mathcal{H})$ is sparse. This method naturally extends to combinations of affine transformations of the sum space where the Hilbert transform decouples across the affine transformations. By constructing a dual sum space that contains the range of the derivative of the sum space, we exploit a relationship between the sqrt-Laplacian, derivative and Hilbert transform (see \cref{eq:hilbert-fL}) to solve \cref{eq:fpde}.

There exist many methods for tackling problems involving fractional Laplacian terms. Finite element methods (FEM) are popular, for instance see \cite{Bonito2018, Lischke2020} and the references therein. Typically during the construction of FEM, the boundary must be bounded. This implies that special care must be taken for the imposed boundary conditions as the different definitions of the fractional Laplacian are no longer equivalent. Moreover, the solutions themselves are often heavy-tailed, i.e.~they decay at the rate of a Cauchy distribution rather than a Gaussian (see \cref{sec:frac-heat-example}) \cite[Sec.~2]{Vazquez2018}. Hence, FEM na\"ively applied to a (large) truncation of an unbounded domain will likely exhibit numerical artefacts due to the nonlocal nature of the operators \cite[Sec.~9]{D2020}.

Spectral methods for tackling Helmholtz-like problems posed on $\mathbb{R}^d$ can also be found in the literature \cite{Chen2018, Tang2018, Tang2020, Mao2017, Sheng2020, Li2021}. For a summary see \cite{D2020}. Mao and Shen \cite{Mao2017} developed a Hermite spectral method to solve the fractional Helmholtz problem ($\lambda >0$, $\eta=\mu=0$) on unbounded domains, $d \in \{1,2\}$, for an arbitrary fractional Laplacian $(-\Delta)^s$, $s \in (0,1)$.  Each solve relied on four discrete Hermite transforms. They also developed a Galerkin method where the stiffness matrix entries can be computed by explicit integrals, however, the stiffness matrix is dense. Li et al.~\cite{Li2021} also develop a spectral-Galerkin method based on Hermite polynomials and their stiffness matrix is also dense. They are also able to define a truncated fractional Laplacian. Tang et al.~\cite{Tang2018} developed a Hermite collocation method where they derived explicit recurrence relationships for the entries in the stiffness matrix. They applied their method for $d \in \{1,2\}$ as well as to nonlinear problems. Similarly, as they use a collocation method, their linear systems are dense. Then, Tang et al.~\cite{Tang2020} extended their collocation method to modified mapped Gegenbauer functions which are better suited to approximating solutions with algebraic decay rates. Sheng et al.~\cite{Sheng2020} note that for dimensions two or more, the previous methods can become expensive. Sheng et al.~developed a Chebyshev spectral Galerkin method for the fractional positive-definite Helmholtz equation on an unbounded domain using the Dunford--Taylor formulation of the fractional Laplacian \cite[Th.~4.1]{Bonito2019}. They construct so-called mapped Chebyshev functions defined on $\mathbb{R}$. Their method works for arbitrary $s \in (0,1)$ and they considered one, two, and three-dimensional problems. The initialization of their algorithm requires the solution of an eigenvalue problem with worst case complexity $\mathcal{O}(n^3)$ in one dimension. However, they remark this can be improved to $\mathcal{O}(n^2)$ with an optimal solver. Each solve requires an FFT and the complexity of the solve is $\mathcal{O}(n (\log n)^d)$, where $d$ is the dimension.  Recently, in one dimension, a series of specialized algorithms were developed by de la Hoz and coworkers that utilize a so-called $L\cot(s)$ transformation to convert the unbounded domain to a bounded one \cite{cayama2021pseudospectral, cayama2020numerical,cayama2022fast,cuesta2023numerical}. They converge at a spectral rate when there is sufficient smoothness in the problem, accurately capture behaviour of the solution as $|x|\to \infty$, and may be applied to nonlinear problems. The main disadvantages are the heavy reliance on the choice of $L$ for accuracy (although heuristics exist), the evaluation of infinite sums or expensive evaluations of $_{2}F_1$ functions (which may require higher precision than double precision), and the ill-conditioning of the matrices that represent the action of the fractional Laplacian which introduces numerical error when the truncation degree is large. Out of the four papers mentioned, \cite{cuesta2023numerical} is a specialized algorithm for $s=1/2$ and accelerates the computation of the sqrt-Laplacian via an FFT resulting in a fast algorithm. However, depending on the behaviour of the solution, the algorithm requires different choices for the setup (for instance the specific choice of the transformation to the bounded domain) which induces some subtleties in the implementation. We emphasize their algorithms are effective, however, we believe our spectral method is better suited for certain classes of nonsmooth right-hand sides (see \cref{sec:examples:nonsmooth}), solutions with singularities that naturally arise in the case of the sqrt-Laplacian, and is better conditioned. We prove that the linear system to compute the solution may be preconditioned with a diagonal preconditioner such that the condition number is $\mathcal{O}(1)$, independent of the truncation degree and the number of discretization intervals. Moreover, the expansion of the right-hand side relies on the use of frames for which an $\epsilon$-truncated SVD factorization has been proven to mitigate the perceived ill-conditioning (see \cref{sec:implementation}). 

We reiterate some advantages of our method. We can handle the terms $\fdx$ and $\Hil$ as well as $(-\Delta)^{1/2}$ and $\mathcal{I}$, the former of which are not included in the aforementioned works. Moreover, the ability to contain multiple affine transformations of the reference sum space allows one to better handle regions of discontinuity in the data resulting in better approximations. The resulting linear systems are banded and sparse. Each solve is divided into independent  well-conditioned  solves, that can be computed in parallel, involving only the functions contained in the same affine transformed sum space. This results in quick solve times.  Moreover, there exists a diagonal preconditioner such that the preconditioned linear system has an $\mathcal{O}(1)$ condition number, independent of the truncation degree or interval choice.   The  evaluation of the solution  requires the integral of four independent special functions per differently scaled sum space, and if all sum spaces have the same scaling (i.e.~they are just translations), we require only four integrals in total. The special functions are not dependent on the right-hand side of \cref{eq:fpde}, however, they are dependent on $\lambda$, $\mu$, and $\eta$. 

Given that the different affine transformed sum spaces have overlapping support, the fact that the operator decouples across the sum spaces seems implausible at first glance. There must be communication between the sum spaces for continuity purposes. The communication occurs during the expansion of the right-hand side: a process that cannot be decoupled across different affine transformed sum spaces. Hence, the computational expense shifts from the solve of the equation to an interpolation of a known function \cite{Adcock2019, Adcock2020}.

\begin{remark}
As the sum spaces centred at multiple intervals are combined together to form the overall approximation space, it is tempting to call this a spectral \emph{element} method. We avoid using the terminology ``element" as it may be confusing. The support of a sum space function centred at an interval is not necessarily contained within the interval. In fact the support of the approximation space is always $\mathbb{R}$. Moreover, the expansion of the right-hand side is slightly more difficult than a typical finite/spectral element method as the sum space functions centred at different intervals all interact. However, we emphasize that the operator $L_{\lambda,\mu,\eta}$ decouples across sum space functions centred at different intervals. Hence, given an expansion of the right-hand side, the solve complexity only increases linearly with the number of intervals used and the solve over different intervals can be done in parallel.
\end{remark}

The paper is structured as follows. In \cref{sec:math-setup} we rigorously define the problem we are solving and prove relationships between the sqrt-Laplacian, the Hilbert transform, and the derivative operator. In \cref{sec:sum-space}, we introduce the approximation space of our spectral method which we call the ``sum space". We prove the regularity of the functions contained in the sum space and derive the action of the fractional operator $\mathcal{L}_{\mu,\lambda,\eta}$ on the functions. In \cref{sec:spectral-method}, we outline the algorithm in \cref{alg:spectral-method} and discuss implementation details in \cref{sec:implementation}. We analyze the conditioning of the solve in \cref{sec:conditioning} and prove the existence of a diagonal preconditioner that induces a condition number independent of the truncation degree and number of intervals. In \cref{sec:analysis} we prove a convergence result for the spectral method and consider various examples in \cref{sec:examples} which include the fractional heat equation and a fractional wave propagation problem. In \cref{sec:conclusions} we give our conclusions. In the appendices, we explain how to utilize the FFT to approximate the continuous inverse Fourier transforms of the four special functions, required for the evaluation of the solution, and also discuss special cases for the parameters $\lambda$, $\mu$, and $\eta$.

\section{Mathematical setup}
\label{sec:math-setup}
Let $W^{s,p}(\mathbb{R})$ denote the (possibly fractional) Sobolev space \cite{Adams2003,Di2012} and $H^{s}(\mathbb{R}) \coloneqq W^{s,2}(\mathbb{R})$. We denote the Lebesgue space by $L^s(\mathbb{R})$, $s >0$. We seek a solution $u \in H^{1/2}(\mathbb{R})$ for \cref{eq:fpde}. Moreover, if $\eta \neq 0$, then we require the stronger assumption that $u \in H^1(\mathbb{R})$. We note that $H^1(\mathbb{R}) \subset H^{1/2}(\mathbb{R})$ which follows from the (Fourier) definition of $H^1(\mathbb{R})$ \cite[Sec.~7.62]{Adams2003} and a small extension to \cite[Prop.~3.6]{Di2012}. Similarly, the right-hand side $f$ must have sufficient regularity so that \cref{eq:fpde} is well-posed. Let $H^{-s}(\mathbb{R})$ denote the dual of $H^s(\mathbb{R})$, then we require $f \in H^{-1/2}(\mathbb{R})$.

$\mathcal{I}$ denotes the identity operator and $(-\Delta)^{1/2}$ denotes the fractional sqrt-Laplacian. Its definition is somewhat subtle depending on the domain and range considered. Let $\mathcal{S}$ denote the space of Schwartz functions. For any $s \in (0,1)$, we define $(-\Delta)^s : \mathcal{S} \to L^2(\mathbb{R})$ as \cite[Sec.~3]{Di2012}:
\begin{align}
(-\Delta)^{s} u(x) &\coloneqq c_{s} \,\,  \dashint_{\mathbb{R}} \frac{u(x) - u(y)}{|x-y|^{1+2s}} \, \dy, \; \text{for a.e.} \; x \in \mathbb{R}, \;\;
c_s  \coloneqq \frac{4^s \Gamma(1/2+s)}{\pi^{1/2} |\Gamma(-s)|}. \label{def:fracLap}
\end{align} 
Here $\dashint_{\mathbb{R}} \cdot$ denotes the Cauchy principal value integral \cite[Ch.~2.4]{King2009a} and $\Gamma(\cdot)$ denotes the Gamma function. For any $u \in H^s(\mathbb{R})$ it can be shown that $(-\Delta)^{s/2}u \in L^2(\mathbb{R})$ \cite[Prop.~3.6]{Di2012} where $(-\Delta)^{s/2}$ is defined as in \cref{def:fracLap} for $s \in (0,1)$. This also holds for $s=1$ by a small extension. Thus we consider the weak form reformulation of the fractional Laplacian $(-\Delta)^s: H^s(\mathbb{R}) \to H^{-s}(\mathbb{R})$:
\begin{align}
\langle (-\Delta)^s u, v \rangle_{H^{-s}(\mathbb{R}),H^s(\mathbb{R})} &= \langle (-\Delta)^{s/2} u,(-\Delta)^{s/2}  v \rangle_{L^2(\mathbb{R})} \;\; \text{for all} \; v \in H^s(\mathbb{R}). \label{eq:weak}
\end{align}
The right-hand side of \cref{eq:weak} is often referred to as the \emph{quadratic} form of the fractional Laplacian. The quadratic form is best suited to prove existence of solutions. Suppose $\lambda > 0$ and $\eta \in \mathbb{R}$. Then the following equation has a weak solution $u_* \in H^{1/2}(\mathbb{R})$ (see \cref{th:existence}), 
\begin{align}
\begin{split}
&\lambda \langle u, v \rangle_{L^2(\mathbb{R})} + \eta \langle \Hil u, v \rangle_{L^2(\mathbb{R})}\\
& \indent + \langle (-\Delta)^{1/4} u,(-\Delta)^{1/4}  v \rangle_{L^2(\mathbb{R})} = \langle f, v \rangle _{H^{-1/2}(\mathbb{R}),H^{1/2}(\mathbb{R})}. 
\end{split} \label{eq:weak-form}
\end{align}
By definition of $H^{1/2}(\mathbb{R})$ we have that $u_* \in L^2(\mathbb{R})$. Moreover, if $f \in L^2(\mathbb{R})$ (and thus $(-\Delta)^{1/2} u_* \in L^2(\mathbb{R})$) then the weak solution $u_*$ also satisfies \cite[Th.~1.1]{Kwasnicki2017}
\begin{align}
(\lambda \mathcal{I} + \eta \Hil + (-\Delta)^{1/2}) u_* = f \; \text{a.e.~in} \; \mathbb{R}. \label{eq:strong}
\end{align}

The fractional Laplacian can also be defined via the Fourier transform. In this work we use the following conventions for the Fourier transform and its inverse, $\mathcal{F}:L^1(\mathbb{R}) \to C(\mathbb{R})$:
\begin{align}
\mathcal{F}[f](\omega) = \int_{\mathbb{R}} f(x) \me^{-\mi \omega x} \dx, \;\;\; 
\mathcal{F}^{-1}[F](x) = \frac{1}{2\pi} \int_{\mathbb{R}} F(\omega) \me^{\mi \omega x} \mathrm{d}\omega. \label{def:fourier}
\end{align}
The Fourier transform is an automorphism for the space $\mathcal{S}$, $\mathcal{F} : \mathcal{S} \to \mathcal{S}$ \cite[Cor.~9.1.8]{Bogachev2020}.  By duality, the Fourier transform (and its inverse) may be defined for the space of tempered distributions, $\mathcal{F}:\mathcal{S}^* \to \mathcal{S}^*$, where the space of tempered solutions $\mathcal{S}^*$ is the dual space of $\mathcal{S}$ \cite[Sec.~9.3]{Bogachev2020}. As the space of Schwartz functions is dense in $L^p(\mathbb{R})$, for $p \in [1,\infty)$, the Fourier transform may be extended to all $f \in L^p(\mathbb{R})$. However, the Fourier transform may only be a tempered distribution for $p > 2$. The Hausdorff--Young inequality implies that, for $p \in [1,2]$, $\mathcal{F} : L^p(\mathbb{R}) \to L^q(\mathbb{R})$ where $1/p+1/q=1$ \cite[Ch.~2.3]{Rauch1991}. In general, the Fourier transform of a function $f \in L^p(\mathbb{R}) \backslash L^1(\mathbb{R})$, $p \in (1,\infty)$, cannot be computed via the formula in \cref{def:fourier} \cite[Sec.~9.2]{Bogachev2020}.

If both $u, (-\Delta)^s u \in L^p(\mathbb{R})$ for some $p \in [1,\infty)$, then the operator $(-\Delta)^s$ may be equivalently interpreted as a Fourier multiplier, i.e.~the following equation holds in the sense of distributions \cite[Th.~1.1]{Kwasnicki2017}:
\begin{align}
\mathcal{F}[(-\Delta)^s [u]](\omega) = |\omega|^{2s} \mathcal{F}[u](\omega).
\label{eq:fourier-frac-laplace}
\end{align}
If $p \in [1,2]$, then \cref{eq:fourier-frac-laplace} holds a.e.~in $\mathbb{R}$ \cite[Th.~1.1]{Kwasnicki2017}.

$\mathcal{H}: L^p(\mathbb{R}) \to L^p(\mathbb{R})$, $p \in (1,\infty)$, denotes the Hilbert transform which is a bounded linear operator. It can be defined via the following convolution:
\begin{align}
\mathcal{H}[u](x) \coloneqq \frac{1}{\pi} \dashint_{\mathbb{R}} \frac{u(y)}{x-y}  \mathrm{d}y \;\; \text{for a.e.} \; x \in \mathbb{R}. 
\end{align}
Equivalently, the Hilbert transform can be seen as a Fourier multiplier such that \cite[Sec.~2.2]{Johansson1999}
\begin{align}
\mathcal{F}[\mathcal{H}[u]](\omega) = -\mi \, \mathrm{sgn}(\omega) \mathcal{F}[u](\omega),
\label{eq:fourier-hilbert}
\end{align}
where $\mathrm{sgn}(\cdot)$ denotes the sign function. A useful property of the Hilbert transform is that it is anti-self adjoint \cite[Th.~102]{Titchmarsh1948}.
\begin{lemma}[Anti-self adjointness of the Hilbert transform]
\label{lem:existence2}
The Hilbert transform is anti-self adjoint, i.e.~for $u \in L^p(\mathbb{R})$, $v \in L^q(\mathbb{R})$ such that $1 <p, q < \infty$, $1/p+1/q = 1$,
\begin{align}
\langle \mathcal{H}[u], v \rangle_{L^p(\mathbb{R}), L^q(\mathbb{R})} = -  \langle u, \mathcal{H}[v] \rangle_{L^p(\mathbb{R}), L^q(\mathbb{R})}.
\end{align}
\end{lemma}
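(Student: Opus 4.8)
The plan is to establish the identity first on the dense subspace of Schwartz functions, where the Fourier-multiplier representation \cref{eq:fourier-hilbert} reduces it to an elementary computation, and then to extend it to all of $L^p(\mathbb{R}) \times L^q(\mathbb{R})$ by a density-and-continuity argument. The two facts that power the density step are: the Hilbert transform is bounded on $L^p(\mathbb{R})$ and on $L^q(\mathbb{R})$ for $1 < p, q < \infty$, and $\mathcal{S}$ is dense in each of these spaces. Combined with H\"older's inequality, which shows that $(f,g) \mapsto \langle f, g \rangle_{L^p(\mathbb{R}),L^q(\mathbb{R})} = \int_{\mathbb{R}} f g \, \dx$ is continuous on $L^p(\mathbb{R}) \times L^q(\mathbb{R})$, it suffices to prove $\langle \mathcal{H}[u], v \rangle = - \langle u, \mathcal{H}[v] \rangle$ for $u, v \in \mathcal{S}$.

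For $u, v \in \mathcal{S}$ I would combine \cref{eq:fourier-hilbert} with the multiplication formula. Using the conventions of \cref{def:fourier}, Fubini's theorem and the inversion formula give, for any $f, g \in \mathcal{S}$,
\[
\int_{\mathbb{R}} f(x) g(x) \, \dx = \frac{1}{2\pi} \int_{\mathbb{R}} \mathcal{F}[f](\omega) \, \mathcal{F}[g](-\omega) \, \mathrm{d}\omega .
\]
Applying this with $f = \mathcal{H}[u]$ and $g = v$ and invoking \cref{eq:fourier-hilbert} yields
\[
\langle \mathcal{H}[u], v \rangle = \frac{1}{2\pi} \int_{\mathbb{R}} \bigl( -\mi \, \sgn(\omega) \bigr) \mathcal{F}[u](\omega) \, \mathcal{F}[v](-\omega) \, \mathrm{d}\omega ,
\]
whereas applying it with $f = u$ and $g = \mathcal{H}[v]$ and using $\sgn(-\omega) = -\sgn(\omega)$ yields
\[
\langle u, \mathcal{H}[v] \rangle = \frac{1}{2\pi} \int_{\mathbb{R}} \mathcal{F}[u](\omega) \bigl( -\mi \, \sgn(-\omega) \bigr) \mathcal{F}[v](-\omega) \, \mathrm{d}\omega = - \langle \mathcal{H}[u], v \rangle .
\]
Alternatively one could bypass the Fourier side and apply Fubini directly to the principal-value convolution, using $\tfrac{1}{x-y} = -\tfrac{1}{y-x}$; since $u, v \in \mathcal{S}$ the integrand is absolutely integrable after a symmetric truncation about the singularity, so this route is equally valid, but the Fourier computation keeps the bookkeeping shortest.

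Finally I would run the density argument: choose $u_n, v_n \in \mathcal{S}$ with $u_n \to u$ in $L^p(\mathbb{R})$ and $v_n \to v$ in $L^q(\mathbb{R})$; then $\mathcal{H}[u_n] \to \mathcal{H}[u]$ in $L^p(\mathbb{R})$ and $\mathcal{H}[v_n] \to \mathcal{H}[v]$ in $L^q(\mathbb{R})$ by boundedness of $\mathcal{H}$, and H\"older's inequality lets me pass to the limit in $\langle \mathcal{H}[u_n], v_n \rangle = - \langle u_n, \mathcal{H}[v_n] \rangle$ to obtain the claimed identity in general.

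I expect the only genuinely delicate point to be the Schwartz-level identity itself — specifically the sign bookkeeping for the Fourier multiplier under the reflection $\omega \mapsto -\omega$ (or, on the direct route, the justification of Fubini for the principal-value integral). Everything else is a routine application of boundedness, density and H\"older, and the reduction to $\mathcal{S}$ is precisely what insulates the argument from the subtleties of the principal value.
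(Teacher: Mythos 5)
The paper does not actually prove this lemma: it simply cites Titchmarsh's Theorem~102, so there is no internal argument to compare against. Your proposal supplies the standard self-contained proof, and it is essentially correct: the reduction to a dense subclass, the use of the boundedness of $\Hil$ on $L^p(\mathbb{R})$ and $L^q(\mathbb{R})$ together with H\"older's inequality to pass to the limit, and the sign computation $\sgn(-\omega)=-\sgn(\omega)$ on the Fourier side are all sound. One technical wrinkle deserves attention: the multiplication formula you state ``for any $f,g\in\mathcal{S}$'' is then applied with $f=\Hil[u]$, but $\Hil[u]$ is generally \emph{not} a Schwartz function (it typically decays only like $|x|^{-1}$ unless $u$ has vanishing moments), so the Fubini justification as written does not literally cover this case. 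The fix is one line: since $\Hil[u],\,v\in L^2(\mathbb{R})$, invoke the Plancherel--Parseval identity $\int_{\mathbb{R}} fg\,\dx = \tfrac{1}{2\pi}\int_{\mathbb{R}}\mathcal{F}[f](\omega)\mathcal{F}[g](-\omega)\,\mathrm{d}\omega$ for $L^2$ functions rather than the Schwartz-level formula; alternatively, your second route (Fubini on the symmetrically truncated principal-value kernel, using the antisymmetry $\tfrac{1}{x-y}=-\tfrac{1}{y-x}$, followed by a limit in $\varepsilon$) avoids the issue entirely for $u,v\in\mathcal{S}$. With that adjustment your argument is complete, and it has the small advantage over the paper's bare citation of making explicit exactly which properties of $\Hil$ (the multiplier identity \cref{eq:fourier-hilbert} and $L^p$-boundedness) the lemma rests on.
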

The Hilbert transform has found uses in aerofoil theory, crack formation, elasticity, and potential theory \cite{King2009a}. Of particular interest here is the connection with the sqrt-Laplacian.

\begin{theorem}
\label{th:fdx-Hil}
Consider any $u \in L^2(\mathbb{R})$ such that $(-\Delta)^{1/2} u \in \mathcal{S}^*$. Then, the following equation holds in the sense of distributions:
\begin{align}
\fdx \mathcal{H}[u] = (-\Delta)^{1/2}[u]. \label{eq:hilbert-fL}
\end{align}
\end{theorem}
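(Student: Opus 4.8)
The plan is to reduce \cref{eq:hilbert-fL} to the elementary identity of Fourier symbols $(\mi\omega)\cdot(-\mi\,\sgn\omega) = |\omega|$, which says precisely that the Fourier multiplier of $\fdx\Hil$ coincides with that of $(-\Delta)^{1/2}$. Since $\mathcal{F}:\mathcal{S}^*\to\mathcal{S}^*$ is a bijection, it then suffices to show that the two sides of \cref{eq:hilbert-fL} are both tempered distributions with the same Fourier transform.

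First I would note that, because $u\in L^2(\mathbb{R})$ and $\Hil$ is bounded on $L^2(\mathbb{R})$, we have $\Hil[u]\in L^2(\mathbb{R})\subset\mathcal{S}^*$; hence $\fdx\Hil[u]$ is a well-defined tempered distribution (differentiation being continuous on $\mathcal{S}^*$). Using the distributional rule $\mathcal{F}[v'](\omega) = \mi\omega\,\mathcal{F}[v](\omega)$ together with \cref{eq:fourier-hilbert}, valid since $u\in L^2(\mathbb{R})$, one obtains
\[
\mathcal{F}\!\left[\fdx\Hil[u]\right](\omega) = \mi\omega\,\mathcal{F}[\Hil[u]](\omega) = \mi\omega\,(-\mi\,\sgn\omega)\,\mathcal{F}[u](\omega) = |\omega|\,\mathcal{F}[u](\omega).
\]
On the other hand, the hypothesis $(-\Delta)^{1/2}u\in\mathcal{S}^*$ is exactly what is needed to invoke the equivalence of the singular-integral definition \cref{def:fracLap} with the Fourier-multiplier definition at the distributional level \cite[Th.~1.1]{Kwasnicki2017}, which gives $\mathcal{F}[(-\Delta)^{1/2}u](\omega) = |\omega|\,\mathcal{F}[u](\omega)$ in $\mathcal{S}^*$. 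Comparing the two expressions and applying $\mathcal{F}^{-1}$ yields \cref{eq:hilbert-fL}.

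The technical heart of the argument — and the step I expect to require the most care — is making these symbol manipulations rigorous in $\mathcal{S}^*$ rather than merely on $L^p(\mathbb{R})$. Multiplication of a tempered distribution by $\sgn\omega$ is harmless here because $\mathcal{F}[u]\in L^2(\mathbb{R})$ and $\sgn$ is bounded, so $-\mi\,\sgn\omega\,\mathcal{F}[u]$ is again an $L^2$ function; the subtle point is the subsequent multiplication by $\mi\omega$ and its identification with $|\omega|\mathcal{F}[u]$. Since the two candidate expressions agree as locally integrable functions on $\mathbb{R}\setminus\{0\}$, one must rule out a spurious contribution supported at the origin (a finite combination of $\delta$ and its derivatives), which follows from the fact that $\mathcal{F}[\Hil[u]]$ is locally integrable near $0$. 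A clean way to package this is to establish \cref{eq:hilbert-fL} first for $u\in\mathcal{S}$, where every step is an honest computation with Schwartz functions, and then pass to general $u$ by choosing $u_n\in\mathcal{S}$ with $u_n\to u$ in $L^2(\mathbb{R})$: then $\fdx\Hil[u_n]\to\fdx\Hil[u]$ in $\mathcal{S}^*$, while $\mathcal{F}[(-\Delta)^{1/2}u_n] = |\omega|\mathcal{F}[u_n]\to|\omega|\mathcal{F}[u]$ in $\mathcal{S}^*$ (testing against $\varphi\in\mathcal{S}$ uses only that $\omega\mapsto|\omega|\varphi(\omega)\in L^2(\mathbb{R})$), and the two limits must coincide.

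Finally, I would remark that the identity admits a physical-space derivation as a sanity check: $\Hil[u] = \tfrac{1}{\pi}\,\pv\tfrac{1}{x} * u$, so $\fdx\Hil[u] = \tfrac{1}{\pi}(\pv\tfrac{1}{x})' * u$, and one checks that $(\pv\tfrac{1}{x})' = -\mathrm{Pf}\tfrac{1}{x^2}$ (the Hadamard finite part) while, since $c_{1/2} = \tfrac{4^{1/2}\Gamma(1)}{\pi^{1/2}|\Gamma(-1/2)|} = \tfrac{1}{\pi}$, the singular integral \cref{def:fracLap} with $s=1/2$ equals $-\tfrac{1}{\pi}\,\mathrm{Pf}\tfrac{1}{x^2} * u$. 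This route, however, requires a careful treatment of convolutions of distributions and finite-part kernels and is less transparent than the Fourier argument above.
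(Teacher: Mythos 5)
Your proposal is correct in substance and rests on the same core computation as the paper, namely the symbol identity $(\mi\omega)\cdot(-\mi\,\sgn\omega)=|\omega|$, but it is packaged differently: you take Fourier transforms of both sides of \cref{eq:hilbert-fL} directly in $\mathcal{S}^*$, whereas the paper pairs both sides with a test function $\phi\in\mathcal{S}$, moves the operators onto $\phi$ (distributional derivative plus the anti-self-adjointness of $\Hil$, \cref{lem:existence2}), and then checks $\Hil\fdx\phi=(-\Delta)^{1/2}\phi$ for Schwartz $\phi$ by comparing Fourier transforms. The payoff of the paper's arrangement is twofold: \cite[Th.~1.1]{Kwasnicki2017} is invoked only for the Schwartz function $\phi$, for which $\phi$ and $(-\Delta)^{1/2}\phi$ both lie in $L^2(\mathbb{R})$ so the hypotheses quoted around \cref{eq:fourier-frac-laplace} are plainly met, and the pairing $\langle u,(-\Delta)^{1/2}\phi\rangle$ simultaneously fixes what $(-\Delta)^{1/2}u$ means for a general $u\in L^2(\mathbb{R})$.

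The one step of yours that needs care is the claim that $(-\Delta)^{1/2}u\in\mathcal{S}^*$ is ``exactly what is needed'' to apply \cite[Th.~1.1]{Kwasnicki2017} to $u$ itself: as that result is stated and used in this paper, it requires $u$ and $(-\Delta)^{1/2}u$ to both lie in some $L^p(\mathbb{R})$, which is not assumed here, and for general $u\in L^2(\mathbb{R})$ the meaning of $(-\Delta)^{1/2}u$ must be fixed before $\mathcal{F}[(-\Delta)^{1/2}u]=|\omega|\,\mathcal{F}[u]$ can even be asserted. Your density fallback (prove the identity for $u_n\in\mathcal{S}$ and let $u_n\to u$ in $L^2(\mathbb{R})$) does repair this, but identifying the limit of $(-\Delta)^{1/2}u_n$ with $(-\Delta)^{1/2}u$ requires precisely the duality definition $\langle(-\Delta)^{1/2}u,\phi\rangle=\langle u,(-\Delta)^{1/2}\phi\rangle$, at which point your argument essentially reduces to the paper's. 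Finally, your worry about a spurious distribution supported at the origin is moot: $\mathcal{F}[\Hil[u]]$ is an $L^2(\mathbb{R})$ function, and multiplying such a function by the polynomial $\mi\omega$ is a pointwise operation, so $\mathcal{F}[\fdx\Hil[u]]=|\omega|\,\mathcal{F}[u]$ holds a.e.\ with no possible delta contribution.
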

\begin{proof}
For any $u \in L^2(\mathbb{R})$, $(-\Delta)^{1/2} u$ is defined as follows: for any $\phi \in \mathcal{S}$ \cite{Kwasnicki2017},
\begin{align}
\int_{\mathbb{R}} (-\Delta)^{1/2}[u] \phi \, \dx = \int_{\mathbb{R}} u (-\Delta)^{1/2}[\phi] \, \dx,
\end{align}
where $(-\Delta)^{1/2}[\phi]$ is understood via \cref{def:fracLap}. Similarly,
\begin{align}
\int_{\mathbb{R}} \fdx \Hil[u] \phi \, \dx = - \int_{\mathbb{R}} \Hil[u] \fdx \phi \, \dx = \int_{\mathbb{R}} u \Hil \fdx \phi \, \dx,
\end{align}
where the first equality is the definition of the distributional derivative and the second holds by \cref{lem:existence2} since $u, \fdx \phi \in L^2(\mathbb{R})$. It remains to show that $(-\Delta)^{1/2} \phi = \Hil \fdx \phi$ for any $\phi \in \mathcal{S}$. This follows by comparing Fourier coefficients. 

Since $\phi \in \mathcal{S}$ then $\phi \in L^2(\mathbb{R})$ and by the singular operator definition of $(-\Delta)^{1/2}$, $(-\Delta)^{1/2} \phi \in L^2(\mathbb{R}$). Hence, by a result of Kwasnicki \cite[Th.~1.1]{Kwasnicki2017}, $\mathcal{F}[(-\Delta)^{1/2}\phi](\omega) = |\omega| \mathcal{F}[\phi](\omega)$ for a.e.~$\omega \in \mathbb{R}$. Moreover, since $\phi \in \mathcal{S}$, then $\fdx \phi \in L^2(\mathbb{R})$ which implies that $\Hil[\fdx \phi] \in L^2(\mathbb{R})$ and, therefore, $\mathcal{F}[\Hil [\fdx  \phi]] \in L^2(\mathbb{R})$. In particular,
\begin{align}
\mathcal{F}[\Hil [\fdx  \phi]](\omega) = - \mi \, \sgn(\omega) \mathcal{F}[\fdx \phi](\omega) = |\omega| \mathcal{F}[\phi](\omega) \; \text{for a.e.} \; \omega \in \mathbb{R}. 
\end{align}
\end{proof}

For $f \in L^2(\mathbb{R})$, the same $u_*$ that solves \cref{eq:weak-form} also satisfies
\begin{align}
u_*(x) = \mathcal{F}^{-1} [(\lambda - \mi \, \eta \, \mathrm{sgn}(\omega) +  |\omega|)^{-1} \mathcal{F} [f](\omega)](x), \label{eq:strong-fourier}
\end{align}
whenever the right-hand side of \cref{eq:strong-fourier} is well-defined (which is not automatic). For the case where $\lambda < 0$, $\eta = 0$, there may exist \emph{standing wave} solutions to the homogeneous equation ($f = 0$). These standing waves do not decay and thus do not live in $H^{1/2}(\mathbb{R})$. However, the Fourier transform definition does not automatically select the solution that lives in $H^{1/2}(\mathbb{R})$. Thus a solution that satisfies \cref{eq:strong-fourier} might not satisfy \cref{eq:weak}. In \cref{sec:special-cases} we encounter an example where $f \in L^1(\mathbb{R}) \backslash L^2(\mathbb{R})$ (and thus $\mathcal{F}[f]$ is well-defined) such that \cref{eq:strong-fourier} has a solution and the solution does not live in $H^{1/2}(\mathbb{R})$, and thus cannot be a solution of \cref{eq:weak}. 

\begin{theorem}[Existence]
\label{th:existence}
Suppose that $\mu \in \mathbb{R}$, $\eta = 0$, $\lambda > 0$, and $f \in H^{-1/2}(\mathbb{R})$. Then, there exists a unique weak solution $u_* \in H^{1/2}(\mathbb{R})$ that satisfies \cref{eq:weak-form}.
\end{theorem}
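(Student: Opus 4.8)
The plan is to reformulate \cref{eq:weak-form} variationally and apply the Lax--Milgram theorem on the Hilbert space $H^{1/2}(\mathbb{R})$. I would introduce the bilinear form
\begin{align*}
a(u,v) := \lambda\langle u,v\rangle + \mu\langle\Hil[u],v\rangle + \langle(-\Delta)^{1/4}u,(-\Delta)^{1/4}v\rangle,
\end{align*}
all inner products being over $L^2(\mathbb{R})$, so that a function solving \cref{eq:weak-form} is exactly a $u_*\in H^{1/2}(\mathbb{R})$ with $a(u_*,v)=\langle f,v\rangle_{H^{-1/2}(\mathbb{R}),H^{1/2}(\mathbb{R})}$ for all $v\in H^{1/2}(\mathbb{R})$. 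Since $H^{-1/2}(\mathbb{R})$ is by definition the dual of $H^{1/2}(\mathbb{R})$, the map $v\mapsto\langle f,v\rangle$ is automatically a bounded linear functional on $H^{1/2}(\mathbb{R})$; the remaining task is to verify that $a$ is bounded and coercive. Throughout I would use the Fourier characterisation $\|w\|_{H^{1/2}(\mathbb{R})}^2\simeq\int_{\mathbb{R}}(1+|\omega|)\,|\mathcal{F}[w](\omega)|^2\,\mathrm{d}\omega$ (a consequence of \cite[Prop.~3.6]{Di2012} and Plancherel) together with $\|(-\Delta)^{1/4}w\|_{L^2(\mathbb{R})}^2=\tfrac{1}{2\pi}\int_{\mathbb{R}}|\omega|\,|\mathcal{F}[w](\omega)|^2\,\mathrm{d}\omega$.

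Boundedness is routine: the identity term is dominated by $\|u\|_{L^2}\|v\|_{L^2}\le\|u\|_{H^{1/2}}\|v\|_{H^{1/2}}$; for the Hilbert term one uses Cauchy--Schwarz and the fact that $\Hil$ is an $L^2$-isometry (its Fourier symbol $-\mi\,\sgn(\omega)$ from \cref{eq:fourier-hilbert} has unit modulus), so $|\langle\Hil[u],v\rangle|\le\|u\|_{L^2}\|v\|_{L^2}$; and the quadratic-form term is bounded by $\|(-\Delta)^{1/4}u\|_{L^2}\|(-\Delta)^{1/4}v\|_{L^2}$, which by the Fourier identities above is $\le C\|u\|_{H^{1/2}}\|v\|_{H^{1/2}}$. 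Hence $|a(u,v)|\le C(\lambda+|\mu|+1)\|u\|_{H^{1/2}}\|v\|_{H^{1/2}}$.

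The crux is coercivity, and it is here that the hypotheses $\eta=0$ and $\lambda>0$ are used. Taking $v=u$: the assumption $\eta=0$ deletes the $\fdx$ term, and the Hilbert term disappears on the diagonal, because \cref{lem:existence2} with $p=q=2$ gives $\langle\Hil[u],u\rangle_{L^2(\mathbb{R})}=-\langle u,\Hil[u]\rangle_{L^2(\mathbb{R})}$, forcing $\langle\Hil[u],u\rangle_{L^2(\mathbb{R})}=0$ for real-valued $u$ (and $\mathrm{Re}\,\langle\Hil[u],u\rangle_{L^2(\mathbb{R})}=0$ for complex-valued $u$, which is what the complex Lax--Milgram theorem requires). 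Therefore, by Plancherel,
\begin{align*}
a(u,u) &= \lambda\|u\|_{L^2(\mathbb{R})}^2+\|(-\Delta)^{1/4}u\|_{L^2(\mathbb{R})}^2 = \frac{1}{2\pi}\int_{\mathbb{R}}(\lambda+|\omega|)\,|\mathcal{F}[u](\omega)|^2\,\mathrm{d}\omega\\
&\ge \frac{\min\{\lambda,1\}}{2\pi}\int_{\mathbb{R}}(1+|\omega|)\,|\mathcal{F}[u](\omega)|^2\,\mathrm{d}\omega \ge \alpha\,\|u\|_{H^{1/2}(\mathbb{R})}^2
\end{align*}
for some $\alpha>0$ depending only on $\lambda$ and the norm-equivalence constant. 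Note this holds for every $\mu\in\mathbb{R}$, not merely for small $|\mu|$, precisely because the Hilbert term is invisible on the diagonal. The Lax--Milgram theorem now yields a unique $u_*\in H^{1/2}(\mathbb{R})$ solving \cref{eq:weak-form}.

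I do not anticipate a genuine obstacle. The only delicate points are (i) the equivalence of $\|\cdot\|_{H^{1/2}(\mathbb{R})}$ with the weighted Fourier integral $\int(1+|\omega|)|\mathcal{F}[\cdot]|^2$, which is standard and already cited in the excerpt, and (ii) choosing to work over $\mathbb{R}$ or over $\mathbb{C}$ and treating the anti-symmetric Hilbert term consistently --- in either case its vanishing (resp.\ purely imaginary) diagonal is exactly what keeps $a$ coercive despite $a$ being non-symmetric, so no smallness of $\mu$ is needed. As an alternative one could argue directly on the Fourier side, setting $\mathcal{F}[u_*](\omega):=\mathcal{F}[f](\omega)\,\big(\lambda+|\omega|-\mi\mu\,\sgn(\omega)\big)^{-1}$ and using $|\lambda+|\omega|-\mi\mu\,\sgn(\omega)|\ge\lambda+|\omega|\ge\min\{\lambda,1\}(1+|\omega|)$ to check that $u_*\in H^{1/2}(\mathbb{R})$ and that it is the unique solution of \cref{eq:weak-form}; but the Lax--Milgram route is cleaner and matches the variational formulation emphasised in the paper.
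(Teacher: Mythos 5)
Your proposal is correct and follows essentially the same route as the paper, whose proof is a brief sketch citing exactly the ingredients you use: the anti-self adjointness of $\Hil$ (so the Hilbert term vanishes on the diagonal and coercivity holds for every $\mu$ when $\lambda>0$ and $\eta=0$), the $L^2$-boundedness of $\Hil$ for continuity of the bilinear form, and the Lax--Milgram theorem. Your write-up simply fills in the details the paper delegates to \cite[Sec.~2.2]{Mao2017}.
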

\begin{proof}
The result follows by utilizing the anti-self adjointness and continuity of $\Hil$ (see \cref{lem:existence2} and \cite{Pichorides1972}) as well as an application of the Lax--Milgram theorem (see \cite[Sec.~2.2]{Mao2017}). 
\end{proof}

\section{Sum space}
\label{sec:sum-space}
The goal of this section is to define the four families of functions found in \cref{fig:relations}. The first row in \cref{fig:relations} form the primal set of functions that we approximate our solution with. The action of $(-\Delta)^{1/2}$ maps the top left and right families to the bottom left and right families of functions, respectively. The map induces a diagonal matrix. By contrast, we construct the identity map from the top left family to the bottom right, and from the top right to the bottom left. These identity mappings only have two diagonals in their induced matrices that are non-zero entries. Throughout this work, we use the convention $\mathbb{N} \coloneqq \{1,2,3,\dots\}$ and $\mathbb{N}_0 \coloneqq \{0,1,2,3,\dots\}$.

\begin{remark} 
We note that there does not exist an identity operator from top left to bottom left and top right to bottom right. This is precisely why we require both families in the top and bottom rows, rather than just one. By combining the first row of families of functions and the bottom row of families of functions, we can find the sparse map induced by $\lambda \mathcal{I}+(-\Delta)^{1/2}$. The operators $\Hil$ and $\fdx$ are treated similarly. 
\end{remark}

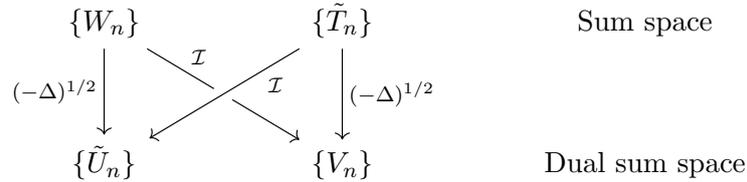
\begin{figure}[h]
\centering
\begin{tikzcd}[column sep=5em, row sep=3em]
\{W_n\} \arrow[swap]{d}{(-\Delta)^{1/2}} \arrow[near start]{dr}{\mathcal{I}} &\{\eT_n\}  \arrow{d}{(-\Delta)^{1/2}}   \arrow[crossing over,near start]{dl}{\mathcal{I}} & \text{Sum space} \\%
\{\eU_{n} \}& \{V_n\}& \text{Dual sum space} 
\end{tikzcd} 
\caption{The domain and range of the operators $(-\Delta)^{1/2}$ and $\mathcal{I}$ for our approximating spaces.}
\label{fig:relations}
\end{figure}

Consider the {\it sum space}:
\begin{align}
S \coloneqq \mathrm{span}( \{\eT_n,  W_n : n \in \mathbb{N}_0 \}),
\end{align}
where $W_n(x) = (1-x^2)^{1/2}_+ U_n(x)$, $U_n$ are Chebyshev polynomials of the second kind, and $(1-x^2)^{1/2}_+ = 0$ if $|x|\geq 1$.  Recall that \cite[Sec.~18.3]{OlverNIST}, $U_n(x)$ are orthogonal to the weight $(1-x^2)^{1/2}_+$, in the domain $(-1,1)$, and
\begin{align}
U_0(x) = 1, \;\; U_1(x) = 2x,\;\; U_{n+1}(x) = 2xU_n(x) - U_{n-1}(x), \; n \in \mathbb{N}.
\end{align} 
Hence, $W_n(x)$ are \emph{weighted} Chebyshev polynomials of the second kind extended to $\mathbb{R}$ by zero. Moreover,
\begin{align}
\eT_n(x) \coloneqq
\begin{cases}
T_n(x) & \text{for} \; |x|\leq 1,\\
(x-\mathrm{sgn}(x)\sqrt{x^2 - 1})^n & \text{for} \; |x| > 1,
\end{cases}
\end{align}
where $T_n(x)$ are Chebyshev polynomials of the first kind.  Recall that \cite[Sec.~18.3]{OlverNIST} $T_n(x)$ are orthogonal to the weight $(1-x^2)^{-1/2}_+$ in $(-1,1)$ and
\begin{align}
T_0(x) = 1, \;\; T_1(x) = x,\;\; T_{n+1}(x) = 2xT_n(x) - T_{n-1}(x), \; n \in \mathbb{N}.
\end{align} 
We call $\eT_n(x)$ \emph{extended} Chebyshev functions of the first kind. Under the change of variables, $x = \cos \theta$, for $|x|\leq 1$, the sum space is equivalent to the Fourier series on the half-period $0 \leq \theta \leq \pi$. Thus expanding a function $f(x)$ in this space is equivalent to the Fourier extension problem \cite{Boyd2002,Bruno2007,Huybrechs2010}. We emphasize that we must take the span of both families of functions otherwise we are unable to construct a sparse representation of the fractional operator in \cref{eq:fpde}.

The extended Chebyshev functions $\eT_n$ satisfy the same recurrence relationship as $T_n$.
\begin{proposition}[Recurrence relation]
\label{prop:recurrence}
For all $x \in \mathbb{R}$,  $n \in \mathbb{N}$, 
\begin{align}
2x \eT_n(x) = \eT_{n+1}(x) +  \eT_{n-1}(x). \label{eq:recurrence1}
\end{align}
\end{proposition}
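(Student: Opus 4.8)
The plan is to verify the identity region by region, exploiting the two different closed forms that define $\eT_n$. On the bounded interval the claim is nothing but the classical three-term recurrence for Chebyshev polynomials of the first kind, so the only genuine work is on $\{|x|>1\}$.

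First, for $|x|\le 1$ we have $\eT_n(x)=T_n(x)$ for every $n$, and the recurrence $2xT_n(x)=T_{n+1}(x)+T_{n-1}(x)$ is standard: writing $x=\cos\theta$ it is the product-to-sum identity $2\cos\theta\cos n\theta=\cos(n+1)\theta+\cos(n-1)\theta$. Hence \cref{eq:recurrence1} holds on $[-1,1]$, and in particular at $x=\pm 1$, where the two branches of $\eT_n$ agree.

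Next, fix $|x|>1$ and set $z=z(x)\coloneqq x-\sgn(x)\sqrt{x^2-1}$, so that $\eT_n(x)=z^n$ for all $n\ge 0$. The key algebraic fact is that $z$ is a root of $t^2-2xt+1=0$, equivalently $z+z^{-1}=2x$ and $z\cdot z^{-1}=1$. I would check this by rationalizing $z^{-1}$, treating $x>1$ and $x<-1$ separately because of the sign: for $x>1$, $z=x-\sqrt{x^2-1}$ and $z^{-1}=x+\sqrt{x^2-1}$, while for $x<-1$, $z=x+\sqrt{x^2-1}$ and $z^{-1}=x-\sqrt{x^2-1}$; in both cases $z+z^{-1}=2x$ and $zz^{-1}=1$, and $z\ne 0$ since $|x|>1$. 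Alternatively, the substitution $x=\sgn(x)\cosh t$ with $t>0$ gives $z=\sgn(x)\me^{-t}$ and makes the identity transparent. Then multiply the quadratic relation $z^2+1=2xz$ by $z^{n-1}$ — legitimate for $n\ge 1$ — to obtain $z^{n+1}+z^{n-1}=2xz^n$, i.e.\ $\eT_{n+1}(x)+\eT_{n-1}(x)=2x\,\eT_n(x)$, which is \cref{eq:recurrence1} on $\{|x|>1\}$. Combining the two regions proves the proposition.

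The only point requiring care — the ``main obstacle'', such as it is — is the bookkeeping of $\sgn(x)$ in the definition of $z$, i.e.\ confirming that the same quadratic $t^2-2xt+1$ is satisfied whether $x>1$ or $x<-1$ so that the root argument applies uniformly; the hyperbolic substitution sidesteps this cleanly and is probably the cleanest way to present the step.
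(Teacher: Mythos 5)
Your proposal is correct and follows essentially the same route as the paper: the same split into $|x|\le 1$ (classical Chebyshev three-term recurrence) and $|x|>1$, where your observation that $z=x-\sgn(x)\sqrt{x^2-1}$ satisfies $z+z^{-1}=2x$ (multiplied through by $z^{n-1}$) is just an algebraic repackaging of the paper's substitution $x=\pm\cosh(y)$, which you yourself note as the alternative. No gaps.
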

\begin{proof}
The case where $|x| \leq 1$ is the three-term recurrence for Chebyshev polynomials of the first kind. For $|x| > 1$, the result follows by utilizing the substitution $x = \cosh(y)$ if $x>1$ and $x=-\cosh(y)$ if $x<-1$ into \cref{eq:recurrence1}. 
\end{proof}

The following result is a key observation for the construction of the spectral method. The result can be found in \cite[Cor.~5.7]{Trogdon2015}.
\begin{proposition}[Hilbert transform]
\label{prop:Hilbert-W}
For $x \in \mathbb{R}$, $n \in \mathbb{N}_0$, we have that
\begin{align}
\Hil[W_n](x) = \eT_{n+1}(x).
\label{eq:HwU}
\end{align}
Moreover, for a function $u \in L^p(\mathbb{R})$, $p \in (1,\infty)$, the Hilbert transform is an anti-involution, i.e.~$\Hil^2[u] = -u$. Hence,
\begin{align}
\Hil[\eT_{n+1}](x) = -W_n(x).
\end{align}
\end{proposition}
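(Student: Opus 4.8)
The plan is to prove the two claimed identities in \cref{prop:Hilbert-W} by reducing both to a single Fourier-multiplier computation, exploiting the already-established relations \cref{eq:hilbert-fL}, the anti-self-adjointness \cref{lem:existence2}, and the standard fact that $\Hil$ is a bounded anti-involution on $L^p(\mathbb{R})$. The first identity $\Hil[W_n] = \eT_{n+1}$ is quoted from \cite[Cor.~5.7]{Trogdon2015}, so I would either cite it directly or, for completeness, sketch the derivation: $W_n = (1-x^2)^{1/2}_+ U_n$ is supported on $[-1,1]$, so it lies in every $L^p(\mathbb{R})$, $p\in(1,\infty)$ (in fact $p<4$ near the endpoints, but certainly the relevant range), hence $\Hil[W_n]$ is well defined. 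One then computes the finite Hilbert transform $\frac{1}{\pi}\dashint_{-1}^{1}\frac{(1-y^2)^{1/2}U_n(y)}{x-y}\,\dy$ using the classical Chebyshev--Hilbert pair (the finite Hilbert transform of $(1-y^2)^{1/2}U_n$ is $T_{n+1}$ on $[-1,1]$, and the same integral formula analytically continues to the branch $(x-\sgn(x)\sqrt{x^2-1})^{n+1}$ for $|x|>1$), which is exactly the definition of $\eT_{n+1}$. The key sanity check is that the $|x|>1$ branch is precisely the value of the Cauchy integral off $[-1,1]$, which matches the Joukowski-type formula defining $\eT_{n+1}$.

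For the second identity, I would first invoke the anti-involution property of the Hilbert transform: since $\Hil^2[u] = -u$ for all $u \in L^p(\mathbb{R})$, $p\in(1,\infty)$, one would like to simply apply $\Hil$ to both sides of $\Hil[W_n] = \eT_{n+1}$ to get $\Hil[\eT_{n+1}] = \Hil^2[W_n] = -W_n$. The one point that needs care is the justification that $\eT_{n+1} \in L^p(\mathbb{R})$ for some $p\in(1,\infty)$, so that the anti-involution identity legitimately applies to $u = \eT_{n+1}$; I would note that $\eT_{n+1}(x) = (x-\sgn(x)\sqrt{x^2-1})^{n+1} = O(|x|^{-(n+1)})$ as $|x|\to\infty$ (this is the decaying branch of the Joukowski map, and it is bounded on $[-1,1]$ since $|T_{n+1}|\le 1$ there), so $\eT_{n+1} \in L^p(\mathbb{R})$ for all $p > 1/(n+1)$, in particular for all $p \in (1,\infty)$ when $n\ge 0$. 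Hence the anti-involution identity applies and the conclusion follows immediately.

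The first reduction can also be phrased via \cref{eq:hilbert-fL}: applying $\fdx$ to $\Hil[W_n] = \eT_{n+1}$ and using $\fdx\Hil[W_n] = (-\Delta)^{1/2}[W_n]$ gives a consistency relation $(-\Delta)^{1/2}[W_n] = \eT_{n+1}'$, which is the content of the upper-left/lower-right arrows in \cref{fig:relations}; but for the proposition itself the cleanest route is the direct finite-Hilbert-transform computation together with analytic continuation, followed by the one-line anti-involution argument for the second identity.

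I expect the main obstacle to be the analytic-continuation step in establishing the first identity cleanly — namely verifying that the Cauchy integral $\frac{1}{\pi}\dashint_{-1}^{1}\frac{(1-y^2)^{1/2}U_n(y)}{x-y}\,\dy$, which equals $T_{n+1}(x)$ on $(-1,1)$ by the classical Chebyshev weight identity, extends to exactly $(x-\sgn(x)\sqrt{x^2-1})^{n+1}$ for $|x|>1$ rather than some other branch. This is handled by recognizing the integral as the boundary value of the Cauchy transform of $(1-z^2)^{1/2}U_n(z)$, which is analytic in $\mathbb{C}\setminus[-1,1]$, decays at infinity, and whose jump across $[-1,1]$ reproduces the density — these three properties pin it down uniquely, and one checks the explicit Joukowski formula satisfies them. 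Since this is precisely \cite[Cor.~5.7]{Trogdon2015}, in the write-up I would lean on that citation and keep the argument short, reserving the detailed verification for the anti-involution step which is genuinely self-contained given \cref{prop:Hilbert-W}'s first line and the $L^p$ decay of $\eT_{n+1}$.
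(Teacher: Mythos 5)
Your proposal is correct and follows essentially the same route as the paper, which itself offers no proof beyond citing \cite[Cor.~5.7]{Trogdon2015} for $\Hil[W_n]=\eT_{n+1}$ and invoking the anti-involution property (applied to the compactly supported, bounded $W_n\in L^p(\mathbb{R})$) for the second identity; your sketch of the finite Hilbert transform plus Cauchy-transform uniqueness and the $O(|x|^{-(n+1)})$ decay of $\eT_{n+1}$ just fills in details the paper leaves to the citation. (The parenthetical ``$p<4$ near the endpoints'' is a harmless slip: $W_n$ is bounded, so it lies in every $L^p(\mathbb{R})$, $p\in[1,\infty]$, with no endpoint restriction.)
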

\begin{remark}
Here $\eT_0(x) = 1$ for all $x \in \mathbb{R}$. It can be shown that $\Hil[\eT_0](x) = 0$. Therefore, $\Hil^2[\eT_0](x) = 0$. This is not a contradiction as $\eT_0 \not \in L^p(\mathbb{R})$ for any $p \in[1,\infty)$. 
\end{remark}

\begin{lemma}
\label{lem:sum-space-regularity}
$W_n \in L^s(\mathbb{R}) \cap W^{1,p}(\mathbb{R})$ for $n \geq 0$, $s \in [1,\infty]$, $p \in [1,2)$, and $\eT_n \in L^s(\mathbb{R})  \cap W^{1,p}(\mathbb{R})$ for $n \geq 1$, $s \in (1,\infty)$, $p \in (1,2)$. 
\end{lemma}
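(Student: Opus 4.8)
The plan is to handle the two families separately, since $W_n$ is compactly supported and well-behaved while $\eT_n$ has global support with only algebraic decay. For $W_n(x) = (1-x^2)^{1/2}_+ U_n(x)$, membership in $L^s(\mathbb{R})$ for all $s \in [1,\infty]$ is immediate: $W_n$ is bounded (a continuous function on the compact set $[-1,1]$, zero outside) and compactly supported, so $\|W_n\|_{L^s}<\infty$ for every $s$, including $s=\infty$. For the Sobolev claim $W_n \in W^{1,p}(\mathbb{R})$ with $p \in [1,2)$, I would compute the distributional derivative. On $(-1,1)$ we have $W_n'(x) = -x(1-x^2)^{-1/2}U_n(x) + (1-x^2)^{1/2}U_n'(x)$, and since $W_n$ is continuous across $x=\pm 1$ (it vanishes there) there is no boundary Dirac contribution — so the distributional derivative is just this classical derivative extended by zero. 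The only issue is integrability near the endpoints, where the term $x(1-x^2)^{-1/2}U_n(x)$ behaves like $(1\mp x)^{-1/2}$; this is in $L^p$ near the endpoint iff $p/2 < 1$, i.e. $p<2$, which is exactly the stated range. The second term is bounded. Hence $W_n' \in L^p(\mathbb{R})$ for $p\in[1,2)$ and $W_n \in W^{1,p}$ for that range.

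For $\eT_n$ with $n\ge 1$, the key is the decay as $|x|\to\infty$. Writing $|x|>1$ and $x>0$ for concreteness, $\eT_n(x) = (x-\sqrt{x^2-1})^n$; since $x - \sqrt{x^2-1} = (x+\sqrt{x^2-1})^{-1} \sim (2x)^{-1}$ as $x\to\infty$, we get $\eT_n(x) = O(|x|^{-n})$ at infinity (and similarly at $-\infty$, using oddness/evenness as appropriate). Therefore $|\eT_n|^s$ is integrable at infinity iff $ns>1$; for $n\ge 1$ this holds for all $s>1$, while at $s=1$ the decay is only $O(|x|^{-1})$ when $n=1$, which fails — this explains the restriction $s\in(1,\infty)$ rather than $[1,\infty)$ (and also why $s=\infty$ is fine since $\eT_n$ is bounded, being $T_n$ on $[-1,1]$ and decaying outside). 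For the Sobolev part, I would differentiate: on $|x|>1$, $\frac{\mathrm d}{\mathrm dx}(x-\sqrt{x^2-1})^n = -n(x-\sqrt{x^2-1})^n (x^2-1)^{-1/2}$, which near $x=1$ blows up like $(x-1)^{-1/2}$ (so $L^p$ locally iff $p<2$) and at infinity decays like $|x|^{-n-1}$ (so integrable at infinity for $p>1$ when $n\ge 1$). Again $\eT_n$ is continuous at $x=\pm 1$ (both pieces equal $T_n(\pm1) = (\pm1)^n$), so no Dirac masses appear in the distributional derivative; one should double-check the matching of one-sided derivatives is not needed because a jump in the derivative only produces an $L^p_{\mathrm{loc}}$ function, not a distribution of higher order, so $W^{1,p}$ membership only needs $\eT_n' \in L^p$. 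Combining the endpoint and tail estimates gives $\eT_n \in W^{1,p}(\mathbb{R})$ for $p\in(1,2)$.

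The main obstacle — really the only delicate point — is the endpoint behaviour at $x=\pm1$: one must verify that the functions are continuous there so that no singular (Dirac) terms enter the distributional derivative, and then pin down the precise exponent of the algebraic blow-up of the classical derivative to extract the sharp upper limit $p<2$. Everything else (boundedness on compact sets, the tail decay rate of $\eT_n$, the lower limit $p>1$ and $s>1$ for the $\eT_n$ family coming from the $O(|x|^{-n})$ tail with $n\ge1$) is routine. I would also remark that the stated ranges are sharp for $n=1$, which is why the lemma is phrased with open intervals for the $\eT_n$ family.
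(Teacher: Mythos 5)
Your argument is correct, but for the $\eT_n$ family it takes a genuinely different route from the paper. The paper disposes of $\eT_n$ abstractly: it writes $\eT_n = \Hil[W_{n-1}]$ (\cref{prop:Hilbert-W}), invokes the $L^s$-boundedness of the Hilbert transform for $s\in(1,\infty)$, and then uses the fact that $\Hil$ and $\fdx$ commute on $W^{1,q}(\mathbb{R})$, $q\in(1,\infty)$, to get $\fdx \eT_n = \Hil[\fdx W_{n-1}]$ a.e., so both the $L^s$ and the $W^{1,p}$ claims for $\eT_n$ are inherited from the elementary claims for $W_{n-1}$; the excluded endpoints $s=1,\infty$ and $p=1$ are precisely where the Hilbert transform fails to be bounded. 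You instead work directly with the explicit formula $(x-\sgn(x)\sqrt{x^2-1})^n$, extracting the $O(|x|^{-n})$ tail, the $(x\mp1)^{-1/2}$ blow-up of the one-sided derivative at $\pm1$, and continuity across $\pm1$ to rule out Dirac contributions, and you make the ``direct check'' for $W_n$ explicit in the same way. Your route is more self-contained (no Pichorides bound, no commutation lemma) and shows the stated ranges are sharp at $n=1$; the paper's route is shorter and makes transparent why $\eT_n$ inherits exactly the $W_{n-1}$ ranges minus the Hilbert-transform endpoints. One small imprecision, which does not affect the conclusion: the restriction $p>1$ is not forced by the tail of $\eT_n'$ (that tail decays like $|x|^{-n-1}$ and is integrable at infinity even for $p=1$), but by $\eT_1\notin L^1(\mathbb{R})$, i.e.\ by the $L^p$ membership of the function itself, as your own $L^s$ discussion already shows.
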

\begin{proof}
Since $W_n(x)$ are essentially bounded and supported on $[-1,1]$, it follows immediately that $W_n \in L^p(\mathbb{R})$ for all $n \geq 0$, $p \in [1,\infty]$. A direct check reveals that $W_n \in W^{1,p}(\mathbb{R})$ for $p \in [1,2)$. Moreover, for any function $f \in L^s(\mathbb{R})$, $s \in (1,\infty)$, we have that \cite{Pichorides1972}
\begin{align}
\| \Hil f \|_{L^s(\mathbb{R})} \leq C \| f \|_{L^s(\mathbb{R})},
\label{eq:frame2}
\end{align}
for a constant $C$ that only depends on $s$. Hence, for $ n \geq 1$,
\begin{align}
\| \eT_n \|_{L^s(\mathbb{R})} = \| \Hil W_{n-1} \|_{L^s(\mathbb{R})} \leq C \| W_{n-1} \|_{L^s(\mathbb{R})} < \infty.
\end{align}
As $\fdx$ and $\Hil$ commute for any function in $W^{1,q}(\mathbb{R})$, $q \in (1,\infty)$ \cite[Th.~3.2]{Johansson1999}, we have that $\Hil \fdx \eT_n(x) = \fdx W_{n-1}(x)$ a.e.~Thus, for any $p \in (1,2)$, $n \geq 1$,
\begin{align}
\| \fdx \eT_n \|_{L^p(\mathbb{R})} = \| \Hil \fdx W_{n-1} \|_{L^p(\mathbb{R})} \leq C \| \fdx W_{n-1} \|_{L^p(\mathbb{R})} < \infty.
\end{align}
Therefore, $\eT_n \in L^s(\mathbb{R})  \cap W^{1,p}(\mathbb{R})$ for $n \geq 1$, $s \in (1,\infty)$, $p \in (1,2)$.
\end{proof}

Consider the expansion, for $x \in \mathbb{R}$,
\begin{align}
u(x) &= \tilde{u}_0 \eT_0(x) +  \sum^\infty_{n=1}[ \tilde{u}_n \eT_n(x) + u_n W_{n-1}(x)], \label{eq:fsum-space}
\end{align}
where $\{\tilde{u}_n\}, \{u_n\} \subset \mathbb{R}$ are constants. Then, a calculation shows that, for $x \in \mathbb{R}$,
\begin{align}
(\lambda \mathcal{I} + \mu \Hil)[u](x) &=  \lambda \tilde{u}_0 \eT_0(x) + \sum^\infty_{n=1}[ ( \lambda \tilde{u}_n+ \mu u_n) \eT_{n}(x) + (\lambda u_n- \mu \tilde{u}_n) W_{n-1}(x) ], \label{eq:Hsum-space}
\end{align}
i.e.~the $(\lambda \mathcal{I} + \mu \Hil)$ operator maps the expansion to itself. Consider the equation $(\lambda \mathcal{I} + \mu \mathcal{H})[u] = f$. If one has an expansion for the right-hand side term, $f$, in the sum space, then it is possible to quickly compute the corresponding solution, $u$, via \cref{eq:Hsum-space}.

\begin{remark}
\label{rem:affine-transform}
We note that the Hilbert transform relationship \cref{eq:HwU} also holds for scaled and shifted Chebyshev polynomials. This provides the bedrock for the decoupling of the Hilbert transform problem across affine transformed sum spaces.
\end{remark}

We note that the sum space is a so-called \emph{frame} for $x \in [-1,1]$ \cite{Adcock2019}. 

\begin{definition}
Consider a Hilbert space $H$. An indexed family of functions $\{\phi_n\} \in H$ is called a \emph{frame} for $H$ if there exist constants $0 < c \leq C < \infty$ such that
\begin{align}
c\| f \|^2_H \leq \sum_n | \langle f, \phi_n \rangle_H |^2 \leq C \|f \|^2_H \;\; \text{for all} \; \; f \in H.
\label{ineq:frame-condition}
\end{align}
\end{definition}

\begin{definition}[Weighted Lebesgue space]
Let $w : [a,b] \to [c,\infty)$, $c > 0$, be a measurable function. Then, the weighted Lebesgue space $L^p_w(a,b)$ is equipped with the norm $\|f\|_{L^p_w(a,b)} \coloneqq (\int_{a}^b |f|^p w \, \dx)^{1/p}$. If $p=2$, then $L^2_w(a,b)$ is a Hilbert space equipped with the inner product $( f,g)_{L^2_w(a,b)} \coloneqq \int_{a}^b f g w \, \dx$.
\end{definition}

The following proposition is equivalent to a similar result by Huybrechs \cite[Cor.~3.2]{Huybrechs2010} concerning a frame for Fourier extensions, as our space restricted to $[-1,1]$ is equivalent to the Fourier basis functions restricted to $[-1,1]$.
\begin{proposition}[Frame]
\label{prop:frame}
Let $w(x) \coloneqq (1-x^2)_+^{-1/2}$. Then, the sum space $S$, for $x \in (-1,1)$, is a frame on the  weighted Lebesgue space $L^2_w(-1,1)$. 
\end{proposition}
\begin{proof}
The families of Chebyshev polynomials $T_n(x)$ and $U_n(x)$ are orthogonal with respect to the weights $(1-x^2)^{-1/2}_+$ and $(1-x^2)^{1/2}_+$, respectively on $L^2(-1,1)$. Consider any $f \in L^2_w(-1,1)$. By noting $\eT_n(x) = T_n(x)$ for $x \in [-1,1]$ and two applications of Parseval's identity, we see that 
\begin{align}
\begin{split}
&\sum_{n=0}^\infty \left[| \langle f, \eT_n \rangle_{L^2_w(-1,1)}|^2 + | \langle f, W_n \rangle_{L^2_w(-1,1)} |^2 \right]\\
& \indent = \sum_{n=0}^\infty \left[| \langle f, w T_n \rangle_{L^2(-1,1)}|^2 + | \langle f w, W_n \rangle_{L^2(-1,1)} |^2 \right]\\
& \indent \indent = \| f  \|^2_{L^2_w(-1,1)} + \| f w \|^2_{L^2_{1/w}(-1,1)}
= 2\| f \|^2_{L^2_w(-1,1)} .
\end{split}\label{eq:frame1}
\end{align}
Thus the lower and upper bound constants in \cref{ineq:frame-condition} are $c = C=2$.
\end{proof}

\begin{remark}
Suppose that the support of a function is contained in the union of user-chosen intervals. Then the function can be represented to arbitrary precision by the frame induced by taking the union of all the sum space functions centred on all the affine transformed sum spaces (see \cref{sec:multiple-intervals} and \cref{prop:mul-frame}).
\end{remark}

\subsection{Dual sum space}
In order to exploit the relationship \cref{eq:hilbert-fL}, we require the action of the derivative on the sum space. Let $V_n(x) \coloneqq (1-x^2)^{-1/2}_+T_n(x)$, $x \in \mathbb{R}$, denote the weighted Chebyshev polynomial of the first kind of order $n$ extended to $\mathbb{R}$ by zero. Define, $(1-x^2)^{-1/2}_+ = 0$ if $|x| \geq 1$. Consider the following functions:
\begin{align}
\eU_{-2}(x) & \coloneqq 
\begin{cases}
0 & |x| \leq 1,\\
-\frac{|x|}{\sqrt{x^2-1}} & |x| > 1,
\end{cases} 
\quad \quad
\eU_{-1}(x)  \coloneqq
\begin{cases}
0 & |x| \leq 1,\\
-\frac{\mathrm{sgn}(x)}{\sqrt{x^2-1}} & |x| > 1.
\end{cases}
\end{align}
Furthermore, for $x \in \mathbb{R}$, we define 
\begin{align}
\eU_0(x) \coloneqq \eT_0(x) + \eU_{-2}(x), \label{def:eU0}
\end{align} 
and, for $n \in \mathbb{N}$, we recursively define the extended Chebyshev functions of the second kind as follows: 
\begin{align}
\eU_n(x) \coloneqq 2\eT_n(x) + \eU_{n-2}(x). \label{def:eUk}
\end{align}
The dual sum space is given by
\begin{align}
S^* \coloneqq \mathrm{span}(\{ \eU_m, V_n: m \in \mathbb{N}_0 \cup \{-2,-1\}, n \in \mathbb{N}_0\}). 
\end{align}

\begin{remark}
Note that the functions in the dual sum space $S^*$ do not live in $H^{1/2}(\mathbb{R})$. However, it is not the solution that is expanded in the dual sum space but rather the right-hand side which is only required to live in $H^{-1/2}(\mathbb{R})$. 
\end{remark}

The weighted Chebyshev polynomials and extended Chebyshev functions have been centred on the interval $[-1,1]$. 
\begin{definition}[Affine transformation]
Consider an interval $I = [a,b] \subset \mathbb{R}$, $a<b$ and the affine transformation $y=2/(b-a) (x-(a+b)/2)$. We define the affine transformed extended and weighted Chebyshev functions and polynomials, centred at $I$, as $\eT_n^{I}(x) = \eT_n(y)$, $\eU_n^{I}(x) = \eU_n(y)$, $W^{I}_n(x) = W_n(y)$, and $V^{I}_n(x) = V_n(y)$. 
\end{definition}
We note that $W^{I}_n(x)$ and $V^{I}_n(x)$ are supported on the interval $I$, whereas $\eT_n^{I}(x)$\ and $\eU_n^{I}(x)$ have global support extending beyond $I$. 

In the following proposition we show some relationships between the sum space and the dual sum space.
\begin{proposition}[Equality and derivatives]
\label{prop:maps}
Consider the interval $I = [a,b] \subset \mathbb{R}$, $a<b$. Then, for any $n \in \mathbb{N}$, $x \in \mathbb{R} \backslash \{a,b\}$,
\begin{align}
\fdx \eT^I_n(x) = \frac{2n}{b-a} \eU^I_{n-1}(x),
\label{eq:prop:maps1}
\end{align} 
and, for $n \in \mathbb{N}_0$,
\begin{align}
\fdx  W^I_n(x) = -\frac{2(n+1)}{b-a} V^I_{n+1}(x).
\label{eq:prop:maps2}
\end{align}
Moreover, for any $x \in \mathbb{R}$,
\begin{align}
W^I_n(x) = \frac{1}{2}[V^I_n(x) - V^I_{n+2}(x)]. 
\label{eq:prop:maps3}
\end{align}
\end{proposition}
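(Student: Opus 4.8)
The plan is to reduce all three identities to the reference interval $I=[-1,1]$ via the affine change of variables and then split into the regions $|x|<1$ and $|x|>1$, the complement being trivial. Indeed, with $y=\tfrac{2}{b-a}(x-\tfrac{a+b}{2})$ one has $\eT^I_n(x)=\eT_n(y)$, $\eU^I_n(x)=\eU_n(y)$, $W^I_n(x)=W_n(y)$, $V^I_n(x)=V_n(y)$ and $\mathrm{d}y/\mathrm{d}x=\tfrac{2}{b-a}$, so the chain rule converts \cref{eq:prop:maps1,eq:prop:maps2} into the $I=[-1,1]$ case (the prefactors $\tfrac{2n}{b-a}$ and $\tfrac{2(n+1)}{b-a}$ becoming $n$ and $n+1$), while \cref{eq:prop:maps3} is unaffected. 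Henceforth $\eT_n,\eU_n,W_n,V_n$ and $x$ refer to the reference interval.

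First I would dispatch \cref{eq:prop:maps3} and \cref{eq:prop:maps2}. For $|x|>1$ every term in both identities vanishes by the convention $(1-x^2)^{\pm1/2}_+=0$; at $x=\pm1$ the same is true for \cref{eq:prop:maps3}, and \cref{eq:prop:maps2}, like \cref{eq:prop:maps1}, is only asserted off $\{a,b\}$. On $(-1,1)$ I substitute $x=\cos\theta$ with $\theta\in(0,\pi)$, so that $W_n(\cos\theta)=(1-x^2)^{1/2}U_n(x)=\sin((n+1)\theta)$ and $V_k(\cos\theta)=T_k(x)/\sqrt{1-x^2}=\cos(k\theta)/\sin\theta$. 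Then \cref{eq:prop:maps3} reduces to the elementary identity $\cos(n\theta)-\cos((n+2)\theta)=2\sin\theta\,\sin((n+1)\theta)$, and \cref{eq:prop:maps2} follows by applying $\fdx=-\frac{1}{\sin\theta}\frac{\mathrm{d}}{\mathrm{d}\theta}$ to $\sin((n+1)\theta)$ and recognising $\cos((n+1)\theta)/\sin\theta=V_{n+1}(x)$.

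Next I treat \cref{eq:prop:maps1} on $(-1,1)$. There $\eT_n=T_n$, and since $\eU_{-2}=\eU_{-1}=0$ on $[-1,1]$, the definitions \cref{def:eU0,def:eUk} collapse to $\eU_0=1$ and $\eU_n=2T_n+\eU_{n-2}$ for $n\ge1$ (with $\eU_{-1}=0$), which is exactly the recursion and initial data obeyed by the Chebyshev polynomials $U_n$ of the second kind; hence $\eU_n=U_n$ on $(-1,1)$ for every $n\ge0$, and the classical identity $T_n'=nU_{n-1}$ closes this case.

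The main obstacle is \cref{eq:prop:maps1} on $|x|>1$, where the exterior seeds $\eU_{-2},\eU_{-1}$ and the recursion \cref{def:eUk} are tuned precisely so that the $\eU_n$ telescope onto the derivative of the closed form $\eT_n(x)=\big(x-\sgn(x)\sqrt{x^2-1}\big)^n$. Following the substitution used in \cref{prop:recurrence}, I would put $x=\sgn(x)\cosh y$ with $y>0$, so that $\sqrt{x^2-1}=\sinh y$ and $\eT_n(x)=\sgn(x)^n\me^{-ny}$, and then prove by induction on $n\ge-1$---using only \cref{def:eU0,def:eUk} together with $2\me^{-ny}\sinh y=\me^{-(n-1)y}-\me^{-(n+1)y}$---that $\eU_n(x)=-\sgn(x)^n\me^{-(n+1)y}/\sinh y$, the base cases $n=-1,0$ being read directly off the definitions of $\eU_{-1}$ and $\eU_0$. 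Since $\mathrm{d}\eT_n/\mathrm{d}y=-n\,\sgn(x)^n\me^{-ny}$ and $\mathrm{d}x/\mathrm{d}y=\sgn(x)\sinh y$, the chain rule then gives $\eT_n'(x)=-n\,\sgn(x)^{n-1}\me^{-ny}/\sinh y=n\,\eU_{n-1}(x)$ for $n\ge1$. (Equivalently, one verifies $\eU_n=g^n\eU_0$ and $g'=\eU_0$ for $g(x)=x-\sgn(x)\sqrt{x^2-1}$, which hinges on $g(x)\big(x+\sgn(x)\sqrt{x^2-1}\big)=1$.) Finally, the points $x=\pm1$, i.e.\ $x=a,b$, must be excluded since there $\eT_n'$ and $\eU_{n-1}$ both develop a one-sided singularity.
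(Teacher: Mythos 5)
Your proposal is correct and follows essentially the same route as the paper's proof of \cref{eq:prop:maps1}: reduce to the reference interval, identify $\eU_n$ with $U_n$ on $(-1,1)$ to invoke $T_n'=nU_{n-1}$, and handle $|x|>1$ by induction on the recursion \cref{def:eUk} (you package the induction as a closed form $\eU_n(x)=-\sgn(x)^n\me^{-(n+1)y}/\sinh y$ under $x=\sgn(x)\cosh y$ and then differentiate, whereas the paper inducts on the derivative identity itself, a cosmetic difference). The only substantive addition is that you prove \cref{eq:prop:maps2} and \cref{eq:prop:maps3} directly via $x=\cos\theta$, where the paper simply cites the classical Chebyshev identities.
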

\begin{proof}
All three identities \cref{eq:prop:maps1}--\cref{eq:prop:maps3} follow from classical identities between Chebyshev polynomials of the first and second kinds, their recurrence relationships, and an induction argument. We prove the first identity \cref{eq:prop:maps1} and note that the other two are found in the literature \cite[Sec.~18.9.10 \& 18.9.22]{OlverNIST}.

We consider \cref{eq:prop:maps1} and initially examine the case where $I=[-1,1]$. Consider the region $x \in [-1,1]$. From the definition of $\eU_n(x)$, for $n \geq 0$, in \cref{def:eU0} and \cref{def:eUk}, and a classical identity between Chebyshev polynomials of the first and seconds kinds \cite[Sec.~18.9.9]{OlverNIST}, it can be shown that
\begin{align}
\eU_n(x) = U_n(x), \;  n \in \mathbb{N}_0,  \;\; \text{for} \;\; x \in [-1,1],
\end{align}
where $U_n(x)$ are the Chebyshev polynomials of the second kind. Hence, another classical identity \cite[Sec.~18.9.21]{OlverNIST} reveals that 
\begin{align}
\fdx \eT_n(x) = n \eU_{n-1}(x) \;\; \text{for} \; x \in (-1,1). \label{eq:maps1}
\end{align}
Consider $|x|>1$. By a direct calculation we see that
\begin{align}
\fdx \eT_1(x) = \eU_0(x) \;\; \text{and} \;\; \fdx \eT_2(x) = 2\eU_1(x).
\end{align}
Moreover, another direct calculation shows that, for $|x|>1$ and $n \geq 2$,
\begin{align}
\fdx \eT_n(x) = -\sgn(x) \frac{n (x-\sgn(x)\sqrt{x^2-1})^n}{\sqrt{x^2-1}}.
\end{align}
Suppose that \cref{eq:prop:maps1} holds up to and including $\eT_{n-1}(x)$. Now,
\begin{align}
n \eU_{n-1}(x) = 2n \eT_{n-1}(x) + n \eU_{n-3}(x) = 2n \eT_{n-1}(x) + \frac{n}{n-2} \fdx \eT_{n-2}(x), \label{eq:maps2}
\end{align}
where the first equality follows by definition of $\eU_n(x)$ and the second equality follows from the induction argument. Substituting in the explicit definitions of $\eT_{n-1}(x)$ and $\fdx \eT_{n-2}(x)$, $|x|>1$, we find that the right-hand side of \cref{eq:maps2} is equal to \cref{eq:maps1}. Hence, \cref{eq:prop:maps1} holds for all $n \in \mathbb{N}$, $x \in \mathbb{R} \backslash \{-1,1\}$, when $I=[-1,1]$, by induction. It follows by applying the chain rule that \cref{eq:prop:maps1} holds for arbitrary $I = [a,b]$.  
\end{proof}

\begin{proposition}[Sqrt-Laplacian]
\label{prop:halfLaplacian}
Consider the interval $I=[a,b] \subset \mathbb{R}$, $a<b$. Then, for any $n \in \mathbb{N}$, $x \in \mathbb{R} \backslash \{a,b\}$, 
\begin{align}
(-\Delta)^{1/2}[\eT^I_n](x) = \frac{2n}{b-a} V^I_n(x),
\end{align}
and, for $n \in \mathbb{N}_0$, $x \in \mathbb{R} \backslash \{a,b\}$, 
\begin{align}
 (-\Delta)^{1/2}[W^I_n](x) = \frac{2(n+1)}{b-a} \eU^I_n(x). 
\end{align}
\end{proposition}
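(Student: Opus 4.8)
The plan is to read off both identities from the operator factorization $(-\Delta)^{1/2} = \fdx \circ \Hil$ supplied by \cref{th:fdx-Hil}, combined with the explicit Hilbert transforms of \cref{prop:Hilbert-W} (transported to the interval $I$ via \cref{rem:affine-transform}) and the derivative formulas of \cref{prop:maps}. No new estimates are required; the entire content is to check that the identities being chained together hold in the correct (distributional versus classical-pointwise) sense.

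First I would verify the hypotheses of \cref{th:fdx-Hil} for $u = W^I_n$ with $n \in \mathbb{N}_0$ and for $u = \eT^I_n$ with $n \in \mathbb{N}$. By \cref{lem:sum-space-regularity}, together with the fact that an affine change of variables preserves membership in $L^2(\mathbb{R})$, each such $u$ lies in $L^2(\mathbb{R})$; and for any $u \in L^2(\mathbb{R})$ the object $(-\Delta)^{1/2}[u]$, defined by $\phi \mapsto \int_{\mathbb{R}} u\,(-\Delta)^{1/2}[\phi]\,\dx$, is a tempered distribution, since this pairing is well-defined and continuous on $\mathcal{S}$ by Cauchy--Schwarz and $(-\Delta)^{1/2}\phi \in L^2(\mathbb{R})$. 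Hence $(-\Delta)^{1/2}[u] = \fdx \Hil[u]$ holds in the sense of distributions in each case.

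For the $W^I_n$ identity, \cref{rem:affine-transform} gives $\Hil[W^I_n] = \eT^I_{n+1}$; since $\eT^I_{n+1} \in W^{1,p}(\mathbb{R})$ for $p \in (1,2)$ by \cref{lem:sum-space-regularity}, it is absolutely continuous, so its distributional derivative agrees a.e.\ with its classical derivative, which by \cref{eq:prop:maps1} equals $\tfrac{2(n+1)}{b-a}\eU^I_n$. Chaining gives $(-\Delta)^{1/2}[W^I_n] = \tfrac{2(n+1)}{b-a}\eU^I_n$ as distributions. For the $\eT^I_n$ identity with $n \in \mathbb{N}$, applying $\Hil$ to $\Hil[W^I_{n-1}] = \eT^I_n$ and using that $\Hil$ is an anti-involution on $L^p(\mathbb{R})$ (\cref{prop:Hilbert-W}, noting $W^I_{n-1}, \eT^I_n \in L^2(\mathbb{R})$) yields $\Hil[\eT^I_n] = -W^I_{n-1}$; then $W^I_{n-1} \in W^{1,p}(\mathbb{R})$ and \cref{eq:prop:maps2} give $\fdx \Hil[\eT^I_n] = -\fdx W^I_{n-1} = \tfrac{2n}{b-a} V^I_n$ a.e., so $(-\Delta)^{1/2}[\eT^I_n] = \tfrac{2n}{b-a} V^I_n$ as distributions.

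Finally I would promote these distributional equalities to the pointwise statement on $\mathbb{R} \backslash \{a,b\}$: on both sides we have functions that are locally integrable on $\mathbb{R}$ (the singularities of $\eU^I_n$ and $V^I_n$ at $a$ and $b$ are of order $|x-a|^{-1/2}$ and $|x-b|^{-1/2}$, hence integrable) and continuous on $\mathbb{R} \backslash \{a,b\}$, so agreement as distributions forces agreement at every point of $\mathbb{R} \backslash \{a,b\}$. The main obstacle is bookkeeping rather than anything deep: I would be most careful to ensure that each intermediate function lies in the $L^p$-space where the Hilbert transform and its anti-involution property are available, that the classical derivative formulas of \cref{prop:maps} (asserted only off $\{a,b\}$) may be identified with the distributional derivatives via absolute continuity of $W^I_n$ and $\eT^I_n$, and that nothing goes wrong at the two endpoints where the target functions blow up. Everything else is a direct substitution.
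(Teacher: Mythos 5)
Your proposal is correct and follows essentially the same route as the paper: verify the regularity hypotheses of \cref{th:fdx-Hil} via \cref{lem:sum-space-regularity}, then chain the Hilbert transform identities of \cref{prop:Hilbert-W} with the derivative formulas of \cref{prop:maps}. The extra care you take in identifying distributional with classical derivatives and upgrading to pointwise equality off $\{a,b\}$ only fills in details the paper leaves implicit.
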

\begin{proof}
By \cref{lem:sum-space-regularity}, $\eT_n, W_m \in L^2(\mathbb{R}) \cap W^{1,s}(\mathbb{R})$, $s \in (1,2)$, $n \geq1$, $m\geq 0$. Thus the conditions of \cref{th:fdx-Hil} hold and $(-\Delta)^{1/2}[\eT_n] = \fdx \Hil[\eT_n]$ and $(-\Delta)^{1/2}[W_m] = \fdx \Hil[W_m]$ in the sense of distributions. The result then follows by applying \cref{prop:Hilbert-W} and \cref{prop:maps}.
\end{proof}

\begin{corollary}[Hilbert transforms of $V^I_n$ and $\eU^I_n$]
Consider the interval $I=[a,b] \subset \mathbb{R}$, $a<b$. Then, for $n \geq 0$,
\begin{align}
\Hil[\eU^I_n](x) = V^I_{n+1}(x) \;\; \text{and} \;\; \Hil[V^I_{n+1}](x) = -\eU^I_n(x). 
\end{align}
\end{corollary}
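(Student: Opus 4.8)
The plan is to combine the sqrt-Laplacian identities of \cref{prop:halfLaplacian} with the factorization $(-\Delta)^{1/2} = \fdx \Hil$ from \cref{th:fdx-Hil}, exactly as in the proof of \cref{prop:halfLaplacian}, but read "in reverse". First I would handle the base interval $I = [-1,1]$ and then transfer to general $I = [a,b]$ by the affine change of variables, noting that all the operators involved ($\Hil$, $\fdx$, $(-\Delta)^{1/2}$) behave covariantly under affine maps (cf.\ \cref{rem:affine-transform} and the chain-rule argument at the end of the proof of \cref{prop:maps}). So the core is the two identities $\Hil[\eU_n] = V_{n+1}$ and $\Hil[V_{n+1}] = -\eU_n$ for $n \ge 0$ on $\mathbb{R}$.

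For the first identity, start from \cref{prop:halfLaplacian}, which gives $(-\Delta)^{1/2}[W_n] = \fdx \Hil[W_n] = 2(n+1)\,\eU_n$ (with $b-a=2$), and also, via \cref{prop:Hilbert-W}, $\Hil[W_n] = \eT_{n+1}$. Hence $\fdx \eT_{n+1} = 2(n+1)\,\eU_n$, which is of course just \cref{eq:prop:maps1}; this is consistent but does not yet give the claim. Instead I would argue as follows. By \cref{prop:maps}, $\fdx W_n = -(n+1) V_{n+1}$ (with $b-a=2$). Applying $\Hil$ and using that $\Hil$ commutes with $\fdx$ on $W^{1,p}(\mathbb{R})$, $p \in (1,2)$ \cite[Th.~3.2]{Johansson1999} — which applies by \cref{lem:sum-space-regularity} — we get $\fdx \Hil[W_n] = -(n+1)\,\Hil[V_{n+1}]$. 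On the other hand $\fdx \Hil[W_n] = (-\Delta)^{1/2}[W_n] = 2(n+1)\,\eU_n$ by \cref{th:fdx-Hil} and \cref{prop:halfLaplacian}. Comparing gives $\Hil[V_{n+1}] = -2\,\eU_n$, which is off by a constant, signalling that one must instead run the argument on $\eT_{n+1}$: since $\Hil[W_n] = \eT_{n+1}$, the anti-involution property (\cref{prop:Hilbert-W}) gives $\Hil[\eT_{n+1}] = -W_n$, and applying $\fdx$ and commuting yields $\Hil[\fdx \eT_{n+1}] = -\fdx W_n = (n+1)V_{n+1}$, while $\fdx \eT_{n+1} = 2(n+1)\eU_n$ by \cref{eq:prop:maps1}. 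Therefore $2(n+1)\,\Hil[\eU_n] = (n+1) V_{n+1}$, i.e.\ $\Hil[\eU_n] = \tfrac12 V_{n+1}$ — still off by a factor of $2$, which tells me the cleanest route is actually to avoid differentiating at all and instead use $(-\Delta)^{1/2} = \Hil \fdx$ directly together with $(-\Delta)^{1/2}[\eT_{n+1}] = \tfrac{2(n+1)}{b-a}V_{n+1}$ from \cref{prop:halfLaplacian}; then $\Hil[\fdx \eT_{n+1}] = (-\Delta)^{1/2}[\eT_{n+1}] = (n+1)V_{n+1}$ (for $b-a=2$), and since $\fdx\eT_{n+1} = 2(n+1)\eU_n$ we again land on $\Hil[\eU_n] = \tfrac12 V_{n+1}$. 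The persistent factor of $2$ indicates that the correct normalization hinges on getting $\fdx \eT_{n+1} = (n+1)\eU_n$ rather than $2(n+1)\eU_n$ on $[-1,1]$, i.e.\ on carefully tracking the factor $b-a$; I would therefore be especially careful that for $I=[-1,1]$ one has $\fdx \eT_n = n\,\eU_{n-1}$ (no factor $2$), which is \cref{eq:maps1}, and re-derive: $\Hil[\fdx \eT_{n+1}] = (-\Delta)^{1/2}[\eT_{n+1}] = (n+1)V_{n+1}$ and $\fdx \eT_{n+1} = (n+1)\eU_n$, giving $\Hil[\eU_n] = V_{n+1}$ cleanly. The case $n=0$ needs $\eU_{-1},\eU_{-2}$ handled separately since $\eT_0$ is constant; here I would check directly from the explicit formulas for $\eU_{-1},\eU_{-2}$ that $\Hil[\eU_0] = V_1$, using $\eU_0 = \eT_0 + \eU_{-2}$ and $\Hil[\eT_0] = 0$.

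For the second identity, apply $\Hil$ to $\Hil[\eU_n] = V_{n+1}$ and invoke the anti-involution property of \cref{prop:Hilbert-W}, valid since $\eU_n \in L^p(\mathbb{R})$ for suitable $p$ — this must be verified, as $\eU_{-2}$ is only borderline integrable; I would note $\eU_n$ for $n\ge 0$ is a combination of $\eT_j$'s and $\eU_{-1},\eU_{-2}$, and $\eU_{-1},\eU_{-2} \in L^p(\mathbb{R})$ for $p\in(1,2)$ by direct inspection of their decay $O(|x|^{-1})$ at infinity and the integrable singularity at $|x|=1$, so $\eU_n \in L^p(\mathbb{R})$, $p\in(1,2)$, and the anti-involution applies. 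Then $\Hil[V_{n+1}] = \Hil[\Hil[\eU_n]] = -\eU_n$. Finally the statement as written uses $V_{n+1}$ on the base interval in the second equation (no superscript $I$) — I would either treat that as a typo and prove the $V^I_{n+1}$ version throughout, or prove it for $I=[-1,1]$ and remark the general case follows by the affine covariance already used.

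The main obstacle I anticipate is purely bookkeeping: pinning down the exact constants (the factors of $b-a$ versus $2(b-a)$, and the shift in indices between $\eT,\eU$ and $W,V$) so that the two identities come out with coefficient exactly $\pm 1$; the functional-analytic content (commuting $\Hil$ with $\fdx$, the anti-involution, covariance under affine maps) is all already available from \cref{th:fdx-Hil}, \cref{lem:sum-space-regularity}, \cref{prop:Hilbert-W}, \cref{prop:maps}, and \cref{prop:halfLaplacian}, with only the membership $\eU_n \in L^p$ for the anti-involution requiring a short separate check.
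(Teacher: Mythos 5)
Your final, corrected derivation is exactly the route the paper takes: write $\eU^I_n = \tfrac{b-a}{2(n+1)}\fdx \eT^I_{n+1}$ via \cref{eq:prop:maps1}, commute $\Hil$ with $\fdx$ (valid because $\eT_{n+1}\in W^{1,p}(\mathbb{R})$, $p\in(1,2)$, by \cref{lem:sum-space-regularity} and \cite[Th.~3.2]{Johansson1999}), use $\Hil[\eT^I_{n+1}]=-W^I_n$ and $\fdx W^I_n=-\tfrac{2(n+1)}{b-a}V^I_{n+1}$, and deduce the second identity from the anti-involution. The intermediate ``persistent factor of $2$'' is entirely a misnormalization on your part: with $b-a=2$ the constants in \cref{prop:maps} and \cref{prop:halfLaplacian} are $n+1$, not $2(n+1)$; you do catch and fix this, but note the detour was never forced by the statement.

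Two supporting claims need repair. First, your justification that $\eU_n\in L^p(\mathbb{R})$ (needed to apply the anti-involution) is wrong as stated: $\eU_{-2}$ does not decay like $O(|x|^{-1})$; for $|x|>1$ it equals $-|x|/\sqrt{x^2-1}\to -1$, so $\eU_{-2}\notin L^p(\mathbb{R})$ for any finite $p$, and likewise $\eT_0\equiv 1\notin L^p(\mathbb{R})$, so termwise membership in the decomposition $\eU_0=\eT_0+\eU_{-2}$ (and hence for every even $n$) fails. The conclusion is still true because the non-decaying parts cancel, $\eT_0(x)+\eU_{-2}(x)=1-|x|/\sqrt{x^2-1}=O(|x|^{-2})$ as $|x|\to\infty$, with an integrable $(x^2-1)^{-1/2}$ singularity at $\pm 1$, giving $\eU_n\in L^p(\mathbb{R})$ for $p\in(1,2)$; alternatively you can bypass the check entirely by running the same commutation argument on $V^I_{n+1}=-\tfrac{b-a}{2(n+1)}\fdx W^I_n$, since $W_n\in W^{1,p}(\mathbb{R})$ by \cref{lem:sum-space-regularity}, which yields $\Hil[V^I_{n+1}]=-\eU^I_n$ directly. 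Second, the separate treatment you propose for $n=0$ is unnecessary: the main argument goes through $\eT_1$, not $\eT_0$, so $n=0$ is already covered; moreover, as sketched (applying $\Hil$ termwise to $\eU_0=\eT_0+\eU_{-2}$) it would founder on precisely the $L^p$ issue above, since $\Hil[\eU_{-2}]$ is not covered by the operators' stated mapping properties.
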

\begin{proof}
We note that
\begin{align}
\Hil[\eU^I_n] = \Hil \fdx \left[\frac{b-a}{2(n+1)} \eT_{n+1}^I \right] = -\fdx  \frac{b-a}{2(n+1)} W^I_n = V^I_{n+1}, 
\end{align}
where the first and third equalities follow from \cref{prop:maps} and the second equality holds as $\eT_{n+1} \in W^{1,p}(\mathbb{R})$ for any $p \in (1,2)$ \cite[Th.~3.2]{Johansson1999}. The second result follows as $\Hil$ is an anti-involution.
\end{proof}

\begin{corollary}[Regularity of $(-\Delta)^{1/2}\eT^I_n$ and $(-\Delta)^{1/2}W^I_n$]
Consider the interval $I=[a,b] \subset \mathbb{R}$, $a<b$. Then, $(-\Delta)^{1/2}\eT^I_n, (-\Delta)^{1/2}W^I_m \in L^p(\mathbb{R})$ for any $p \in (1,2)$, $n \in \mathbb{N}$, $m \in \mathbb{N}_0$. 
\end{corollary}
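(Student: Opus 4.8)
The plan is to avoid any direct singular-integral estimate and instead reduce the claim to the Sobolev regularity already recorded in \cref{lem:sum-space-regularity}, using the closed-form expressions from \cref{prop:halfLaplacian,prop:maps}. First I would apply \cref{prop:halfLaplacian} to rewrite the two quantities in terms of the dual families: $(-\Delta)^{1/2}[\eT^I_n] = \tfrac{2n}{b-a} V^I_n$ for $n \in \mathbb{N}$, and $(-\Delta)^{1/2}[W^I_m] = \tfrac{2(m+1)}{b-a}\eU^I_m$ for $m \in \mathbb{N}_0$. Thus it suffices to prove $V^I_n \in L^p(\mathbb{R})$ for every $n \geq 1$ and $\eU^I_m \in L^p(\mathbb{R})$ for every $m \geq 0$, with $p \in (1,2)$.

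Next I would use the derivative identities \cref{eq:prop:maps1,eq:prop:maps2} of \cref{prop:maps} to express these dual functions as derivatives of primal ones: $V^I_n = -\tfrac{b-a}{2n}\,\fdx W^I_{n-1}$ for $n \geq 1$, and $\eU^I_m = \tfrac{b-a}{2(m+1)}\,\fdx \eT^I_{m+1}$ for $m \geq 0$ (so that $m+1 \geq 1$, which is the range in which that identity applies). Since $W_{n-1}$ is bounded with compact support and $\eT_{m+1}\in L^p(\mathbb{R})$ for $p\in(1,\infty)$ by \cref{lem:sum-space-regularity}, the membership $V^I_n, \eU^I_m \in L^p(\mathbb{R})$ holds precisely when $W^I_{n-1} \in W^{1,p}(\mathbb{R})$ and $\eT^I_{m+1} \in W^{1,p}(\mathbb{R})$ respectively.

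Then I would transfer the $W^{1,p}$-membership statements of \cref{lem:sum-space-regularity}, which are phrased for the reference interval $[-1,1]$, to $I$ via the affine change of variables $y = \tfrac{2}{b-a}\bigl(x - \tfrac{a+b}{2}\bigr)$: this substitution preserves $L^p(\mathbb{R})$ (the norm only rescales by a positive constant) and multiplies weak derivatives by the constant $\tfrac{2}{b-a}$, so $W_{n-1} \in W^{1,p}(\mathbb{R})$ for $p \in [1,2)$ gives $W^I_{n-1} \in W^{1,p}(\mathbb{R})$ for the same range, and $\eT_{m+1} \in W^{1,p}(\mathbb{R})$ for $p \in (1,2)$ (valid since $m+1 \geq 1$) gives $\eT^I_{m+1} \in W^{1,p}(\mathbb{R})$. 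Intersecting the ranges $[1,2)$ and $(1,2)$ leaves exactly $p \in (1,2)$, which is the claimed range, so the corollary follows.

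I do not expect a genuine obstacle here. The one step deserving a sentence of care is the affine invariance used in the third paragraph — essentially the chain rule together with a change of variables in the integral defining the $L^p$-norm — and the bookkeeping that the more restrictive interval $(1,2)$, coming from the $\eT_n$ part of \cref{lem:sum-space-regularity}, is the one that governs the final statement; everything else is a direct substitution.
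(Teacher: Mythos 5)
Your proposal is correct, but it reaches the key memberships by a different mechanism than the paper. Both arguments begin the same way, invoking \cref{prop:halfLaplacian} to reduce the claim to $V^I_n \in L^p(\mathbb{R})$ and $\eU^I_m \in L^p(\mathbb{R})$. From there the paper verifies $V^I_n \in L^p(\mathbb{R})$, $p \in [1,2)$, by a direct check of the inverse-square-root singularities, and then handles $\eU^I_m$ by writing $\eU^I_m = \Hil V^I_{m+1}$ and applying the boundedness of the Hilbert transform on $L^p(\mathbb{R})$, $1<p<\infty$. You instead express both dual functions as derivatives of primal ones via \cref{prop:maps} ($V^I_n \propto \fdx W^I_{n-1}$, $\eU^I_m \propto \fdx \eT^I_{m+1}$) and import the $W^{1,p}$ memberships from \cref{lem:sum-space-regularity}, transferring them to $I$ by an affine change of variables; this buys you a proof with no new singular-integral input at this stage, though the Hilbert-transform bound is of course still hidden inside the lemma's proof that $\eT_{m+1} \in W^{1,p}(\mathbb{R})$, so the two routes use the same ingredients in a different order. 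The only step you leave implicit is the identification of the pointwise identities of \cref{prop:maps} (stated on $\mathbb{R}\backslash\{a,b\}$) with the weak derivatives appearing in the $W^{1,p}$ statement; since $W^I_{n-1}$ and $\eT^I_{m+1}$ are continuous with locally integrable classical derivatives, the distributional and classical derivatives agree a.e., so this is harmless, but a sentence acknowledging it would make the argument airtight. Your index bookkeeping and the restriction to $p \in (1,2)$ (forced by the $\eT$ part of the lemma) are both right.
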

\begin{proof}
A direct check reveals that $V^I_m(x) \in L^p(\mathbb{R})$ for any $p \in [1,2)$, $m \in \mathbb{N}_0$. Hence, by \cref{prop:halfLaplacian}, $(-\Delta)^{1/2}\eT^I_m \in L^p(\mathbb{R})$ for any $p \in (1,2)$. Moreover, for any $p \in (1,2)$,
\begin{align}
\begin{split}
&\| (-\Delta)^{1/2}W^I_n \|_{L^p(\mathbb{R})} 
=  \frac{2(n+1)}{b-a} \| \eU^I_n \|_{L^p(\mathbb{R})} \\
& \indent =  \frac{2(n+1)}{b-a} \| \Hil V^I_{n+1} \|_{L^p(\mathbb{R})} 
\leq \frac{2C(n+1)}{b-a} \| V^I_{n+1} \|_{L^p(\mathbb{R})} < \infty.
\end{split}
\end{align}
\end{proof}

The Hilbert space $H^{1/2}(\mathbb{R})$ is equipped with the inner-product
\begin{align}
\langle u, v \rangle_{H^{1/2}(\mathbb{R})} \coloneqq \langle u, v \rangle_{L^2(\mathbb{R})} + \langle (-\Delta)^{1/4} u,  (-\Delta)^{1/4} v\rangle_{L^2(\mathbb{R})}.
\end{align}
The following proposition shows that $\eT_n(x)$ and $W_n$ have partial orthogonality with respect to the $H^{1/2}(\mathbb{R})$ inner product. 
\begin{proposition}[Partial orthogonality in $H^{1/2}(\mathbb{R})$]
$\eT_n$, $W_m$, $n\in \mathbb{N}$, $m \in \mathbb{N}_0$ satisfy
\begin{align}
\langle (-\Delta)^{1/4} \eT_j, (-\Delta)^{1/4} \eT_l \rangle_{L^2(\mathbb{R})} = l \delta_{jl}, \; \langle (-\Delta)^{1/4} W_j, (-\Delta)^{1/4} W_l \rangle_{L^2(\mathbb{R})} = (l+1)\delta_{jl},
\end{align}
where $\delta_{jl}$ is the Kronecker delta. 
\end{proposition}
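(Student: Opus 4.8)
The plan is to move one factor of $(-\Delta)^{1/4}$ onto the other argument by the quadratic-form identity \cref{eq:weak} (with $s=1/2$), thereby reducing both claims to pairings $\langle (-\Delta)^{1/2}\eT_j,\eT_l\rangle$ and $\langle (-\Delta)^{1/2}W_j,W_l\rangle$, and then to feed in \cref{prop:halfLaplacian} together with the classical Chebyshev orthogonality relations. Before that one must know that $\eT_n,W_n\in H^{1/2}(\mathbb{R})$ (for $n\geq 1$, resp.\ $n\geq 0$), so that the left-hand sides are finite and \cref{eq:weak} is applicable: this follows from \cref{lem:sum-space-regularity} and the facts that $W_{n-1}$ vanishes like a square root at $x=\pm1$ (so its Fourier transform decays like $|\omega|^{-3/2}$, whence $W_{n-1}\in\dot H^{1/2}(\mathbb{R})\cap L^2(\mathbb{R})$) and that $\eT_n=\Hil[W_{n-1}]$ with $\Hil$ a unit-modulus Fourier multiplier that is bounded on $L^2(\mathbb{R})$.

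Granting this, \cref{eq:weak} applied with $u=\eT_j$, $v=\eT_l$ (and likewise for the $W$'s) gives $\langle (-\Delta)^{1/4}\eT_j,(-\Delta)^{1/4}\eT_l\rangle_{L^2(\mathbb{R})}=\langle (-\Delta)^{1/2}\eT_j,\eT_l\rangle$, where the pairing on the right is unambiguous since, by \cref{prop:halfLaplacian} with $I=[-1,1]$, $(-\Delta)^{1/2}\eT_j=jV_j\in L^p(\mathbb{R})$ for $p\in(1,2)$ while $\eT_l\in L^q(\mathbb{R})$ with $1/p+1/q=1$; similarly $(-\Delta)^{1/2}W_j=(j+1)\eU_j$. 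Substituting these expressions,
\begin{align*}
\langle (-\Delta)^{1/4}\eT_j,(-\Delta)^{1/4}\eT_l\rangle_{L^2(\mathbb{R})} &= j\,\langle V_j,\eT_l\rangle_{L^2(\mathbb{R})}, \\
\langle (-\Delta)^{1/4}W_j,(-\Delta)^{1/4}W_l\rangle_{L^2(\mathbb{R})} &= (j+1)\,\langle \eU_j,W_l\rangle_{L^2(\mathbb{R})}.
\end{align*}

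Finally, since $V_j$ and $W_l$ are supported on $[-1,1]$, where $\eT_l=T_l$, $\eU_j=U_j$, $V_j=(1-x^2)^{-1/2}T_j$ and $W_l=(1-x^2)^{1/2}U_l$, the two right-hand pairings collapse to the standard orthogonality integrals $\int_{-1}^1(1-x^2)^{-1/2}T_jT_l\,\dx$ and $\int_{-1}^1(1-x^2)^{1/2}U_jU_l\,\dx$, which vanish for $j\neq l$ and equal the relevant normalisation constant when $j=l$; this yields $l\delta_{jl}$ and $(l+1)\delta_{jl}$ respectively (equivalently $j\delta_{jl}$ and $(j+1)\delta_{jl}$ — the two agree, consistent with the symmetry of the bilinear form). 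The only real obstacle is the first step, namely verifying $\eT_n,W_n\in H^{1/2}(\mathbb{R})$ and keeping track of which duality pairing $\langle(-\Delta)^{1/2}\eT_j,\eT_l\rangle$ lives in; once \cref{prop:halfLaplacian} is in hand the remainder is bookkeeping plus the classical Chebyshev identities.
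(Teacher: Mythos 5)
Your argument is correct and follows essentially the same route as the paper's proof: transfer one factor of $(-\Delta)^{1/4}$ via the quadratic form, invoke \cref{prop:halfLaplacian} to write $(-\Delta)^{1/2}\eT_j = jV_j$ and $(-\Delta)^{1/2}W_j = (j+1)\eU_j$, and then use the compact support of $V_j$, $W_l$ together with the classical Chebyshev orthogonality on $[-1,1]$. The extra care you take over $H^{1/2}$ membership and the duality pairing is a refinement of, not a departure from, the paper's argument.
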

\begin{proof}
We note that
\begin{align}
\begin{split}
\langle (-\Delta)^{1/4} \eT_j, (-\Delta)^{1/4} \eT_l \rangle_{L^2(\mathbb{R})} 
&= \langle \eT_j, (-\Delta)^{1/2} \eT_l \rangle_{L^2(\mathbb{R})} = l \langle \eT_j, V_l \rangle_{L^2(\mathbb{R})}\\
&=l \langle T_j, V_l \rangle_{L^2(-1,1)} = l \delta_{jl}. 
\end{split}
\end{align}
Similarly,
 \begin{align}
\begin{split}
\langle (-\Delta)^{1/4} W_j, (-\Delta)^{1/4} W_l \rangle_{L^2(\mathbb{R})} 
&= \langle W_j, (-\Delta)^{1/2} W_l \rangle_{L^2(\mathbb{R})} = (l+1) \langle W_j, \eU_l \rangle_{L^2(\mathbb{R})}\\
&=(l+1) \langle W_j, U_l \rangle_{L^2(-1,1)} = (l+1) \delta_{jl}. 
\end{split}
\end{align}
\end{proof}

\section{Spectral method}
\label{sec:spectral-method}
  The goal of this section is to define the various functions, vectors, and matrices that appear in the spectral method described in \cref{alg:spectral-method}. 

\begin{algorithm}[ht]

\caption{ Spectral method for solving \cref{eq:fpde}}
\label{alg:spectral-method}
{\bf Input:} Values for $\lambda$, $\mu$ and $\eta$, the right-hand side $f$, the number of intervals $K$, a tuple of truncation degrees $\vect{n} = (n_1, n_2, \dots, n_K)$, and a tuple of intervals $\vect{I} = (I_1, I_2, \dots, I_K)$.

{\bf Output:} The coefficient vector ${\bf u}^{\vect{I}, +}_{\vect{n}}$ such that $u(x) \approx \Skp(x) {\bf u}^{\vect{I}, +}_{\vect{n}}$.

{\bf Solve for ${\bf u}^{\vect{I}, +}_{\vect{n}}$:} 
\begin{algorithmic}[1]
\State{\texttt{EXPAND.} Via an $\epsilon$-truncated SVD factorization, solve the least squares problem $\min_{\vectt{f}^{\vect{I}, *}_{\vect{n}}} \| \tens{G} \vectt{f}^{\vect{I}, *}_{\vect{n}} - {\bf b}\|_{\ell^2}$,  such that $f(x) \approx \Skd(x) \vectt{f}^{\vect{I}, *}_{\vect{n}}$ (cf.~\cref{sec:expansion}).}
\State{\texttt{ASSEMBLE.} Assemble the matrix $\tens{L}^{\vect{I}, +}_{\vect{n}}$, cf.~\cref{eq:Ap}, \cref{eq:Lmatrix1}, and  \cref{eq:Lmatrix2}.}
\State{\texttt{SOLVE.} Solve the linear system $\tens{L}^{\vect{I}, +}_{\vect{n}} {\bf u}^{\vect{I}, +}_{\vect{n}} = {\bf f}^{\vect{I}, *}_{\vect{n}}$.}
\end{algorithmic}

{\bf Evaluate $\Skp(x) {\bf u}^{\vect{I}, +}_{\vect{n}}$:} 
\begin{algorithmic}[1]
\State{\texttt{INTEGRATE.} Compute the $4K$ continuous inverse Fourier transforms to find the appended sum space functions as described in \cref{sec:single-interval}. Note that if the intervals are translations (and not scalings), this reduces to four continuous inverse Fourier transforms as shown in \cref{sec:app:appended-sum-space}.}
\State{\texttt{EVALUATE.} Evaluate the sum $u(x_i) \approx \Skp(x_i) {\bf u}^{\vect{I}, +}_{\vect{n}}$ at the grid points $x_i$, $i \in \{1,2,\dots,M\}$.}
\end{algorithmic}
\end{algorithm}

\subsection{Operators}
For any $x \in \mathbb{R}$, we denote the $n$-th sum space and dual sum space quasimatrices, centred at an interval $I = [a,b] \subset \mathbb{R}$, by
\begin{align}
 {\bf S}^I_n(x)  &\coloneqq 
\begin{pmatrix}
\eT_0(x) \; \vline & W^I_0(x) & \eT^I_1(x) \; \vline & \cdots \;\;\; \vline & W^I_n(x) & \eT^I_{n+1}(x)
\end{pmatrix}  \\
 {\bf S}^{I,*}_n(x)  &\coloneqq 
\begin{pmatrix}
\eU^I_{-2}(x) \; \vline & V^I_0(x) & \eU^I_{-1}(x) \; \vline & \cdots \;\;\; \vline & V^I_{n+2}(x) & \eU^I_{n+1}(x)
\end{pmatrix}. 
\end{align}
 Note that ${\bf S}^I_n(x) \in L^\infty(\mathbb{R}) \times H^{1/2}(\mathbb{R})^{2n+2}$ and ${\bf S}^{I,*}_n(x) \in H^{-1/2}(\mathbb{R})^{2n+7}$.  A quasimatrix is a matrix whose ``columns'' are functions defined on $\mathbb{R}$ \cite[Lec.~5]{Stewart1998}. In  ${\bf S}^I_n(x)$  the first column contains the constant function. Thereafter, we block together the columns $W^I_k(x)$ and $\eT^I_{k+1}(x)$ for $k = 0,\dots,n$. Similarly in  ${\bf S}^{I,*}_n(x)$,  the first column contains $\eU^I_{-2}(x)$.  Thereafter, we block together the columns $V^I_{k}(x)$ and $\eU^I_{k-1}(x)$ for $k = 0,\dots,n+2$. We note that  ${\bf S}^{I,*}_n(x)$  has four more columns than  ${\bf S}^I_n(x)$.  These are required to represent the identity map between the two sum spaces exactly.

\begin{proposition}[Quasimatrix operators]
\label{prop:quasimatrix-operators}
Consider the interval $I=[a,b] \subset \mathbb{R}$, $a<b$. Then, for $x \in \mathbb{R}$, 
\begin{align}
 {\bf S}^I_n(x)  =  {\bf S}^{I,*}_n(x) \tens{E}_{n}, \;\;\; \Hil [ {\bf S}^I_n ](x) =  {\bf S}^{I,*}_n(x) \tens{H}_n, 
\end{align}
and for $x \in \mathbb{R} \backslash \{a,b\}$,
\begin{align}
\fdx  [ {\bf S}^I_n](x)  =  {\bf S}^{I,*}_n(x) \tens{D}^I_n,  \; \text{and} \; (-\Delta)^{1/2}   [{\bf S}^I_n](x)  =  {\bf S}^{I,*}_n(x)  \tens{A}^I_n. 
\end{align}
Let
\begin{align}
\tens{N} =
\begin{pmatrix}
-1 & 0 \\
0 & 1
\end{pmatrix}, \;\;
\tens{M} =
\begin{pmatrix}
0 & 1 \\
1 & 0
\end{pmatrix},\;\;
\tens{Z} = 
\begin{pmatrix}
0 & 0 \\
0 & 0
\end{pmatrix},\;\;
\vectt{0} =\begin{pmatrix}
0\\
0
\end{pmatrix}, \;\;
\vectt{e}_2 =\begin{pmatrix}
0\\
1
\end{pmatrix}.
\end{align}
Then  $\tens{E}_{ n }, \tens{H}_{ n }, \tens{D}^I_{ n }, \tens{A}^I_{ n }  \in \mathbb{R}^{(2n+7) \times (2n+3)}$  are defined by
\begin{align}
\begin{split}
    \tens{E}_n   & \coloneqq
   \begin{pmatrix}
   -1        &   \vectt{0}^\top    &     \hdots          &    \vectt{0}^\top      \\
   \vectt{0} &- \tens{N} /2     &               &        \\
   \vectt{e}_2      &  \tens{Z}       & \ddots     &          \\
    \vectt{0}  & \tens{N} /2     &  \ddots    &  - \tens{N} /2    \\
    \vdots   &       &  \ddots     &    \tens{Z}      \\
       \vectt{0}       &       &               &    \tens{N} /2
   \end{pmatrix}, \quad
    \tens{D}^I_n    \coloneqq
   \begin{pmatrix}
   0        &  \vectt{0}^\top        &      \cdots         &     \vectt{0}^\top        \\
  \vectt{0}            & \tens{Z}       &               &        \\
   \vectt{0}          &\frac{2}{b-a} \tens{N}      &             &          \\
        \vdots    &        &              \ddots &   \\
            \vdots        &        &            &             \frac{2(n+1)}{b-a} \tens{N}    \\
  \vectt{0}              &        &            &              \tens{Z}   \\
   \end{pmatrix},\\
    \tens{H}_n    &\coloneqq
   \begin{pmatrix}
   0        &     \vectt{0}^\top     & \cdots              &       \vectt{0}^\top     \\
   \vectt{0}           &- \tens{M} /2     &               &        \\
    \vectt{0}             & \tens{Z}       & \ddots     &          \\
      \vectt{0}           & \tens{M} /2     &  \ddots    &  - \tens{M} /2    \\
   \vdots         &       &  \ddots     &    \tens{Z}      \\
     \vectt{0}            &       &               &    \tens{M} /2
   \end{pmatrix}, 
    \quad
    \tens{A}^I_n    \coloneqq
   \begin{pmatrix}
   0        &      \vectt{0}^\top       &  \cdots    &     \vectt{0}^\top  \\
   \vectt{0}           & \tens{Z}       &                    &  \\
     \vectt{0}            &\frac{2}{b-a} \tens{M}       &             &         & \\
  \vdots          &        &             \ddots &   \\
        \vdots   &        &                       & \frac{2(n+1)}{b-a} \tens{M}    \\
           \vectt{0}      &        &                       &  \tens{Z}   \\
   \end{pmatrix}.
   \end{split}
\end{align}
\end{proposition}
\begin{proof}
The result follows from Propositions \labelcref{prop:Hilbert-W}, \labelcref{prop:maps}, and \labelcref{prop:halfLaplacian}.
\end{proof}

\begin{proposition}[Jacobi matrix]
Consider the interval $I=[a,b] \subset \mathbb{R}$, $a<b$. Let $ {\bf S}^I_\infty$  denote the quasimatrix of the entire sum space, i.e.~$ {\bf S}_n^I$  with $n = \infty$. Moreover, let $ {\bf S}^{\circ,I}_\infty$  denote $ {\bf S}^I_\infty$  but with the first column $(\eT_0)$ removed. For any $x \in \mathbb{R}$, let $y=2/(b-a)(x-(a+b)/2)$. Then,
\begin{align}
y  {\bf S}^{\circ,I}_\infty(x) =  {\bf S}^I_\infty(x)   \tens{J}  
\;\;
\text{where}
\;\; 
 \tens{J}  \coloneqq 
\begin{pmatrix}
0 & 1/2 & 0&\\
0 & 0 & 1/2&\\
0 & 0 & 0&\ddots\\
1/2 & 0& 0&\\
0 & 1/2 & 0& \\
& & \ddots & 
\end{pmatrix}. 
\end{align}
\end{proposition}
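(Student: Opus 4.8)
The plan is to reduce everything to the reference interval $I=[-1,1]$ and then verify the identity one column at a time, reading off the entries of $J$ directly from the three‑term recurrences for $\eT_n$ and $W_n$. By the definition of the affine‑transformed functions, $\eT^I_n(x)=\eT_n(y)$ and $W^I_n(x)=W_n(y)$ with $y=2/(b-a)(x-(a+b)/2)$, so the columns of $S^I_\infty(x)$ and of $S^{\circ,I}_\infty(x)$ are exactly the columns of the reference quasimatrices $S_\infty$ and $S^\circ_\infty$ (the case $I=[-1,1]$) evaluated at $y$. Hence both sides of the claimed identity, evaluated at $x$, coincide with the corresponding reference quantities at $y$, and it suffices to prove $y\,S^\circ_\infty(y)=S_\infty(y)\,J$ for every $y\in\mathbb{R}$. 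This last identity is a column‑by‑column statement: recall that in $S_\infty$ the function $\eT_0$ sits in column $0$, while for $j\geq1$ the function $\eT_j$ sits in column $2j$ and $W_j$ in column $2j+1$; in $S^\circ_\infty$ the function $W_k$ sits in column $2k$ and $\eT_{k+1}$ in column $2k+1$.

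For the odd columns of $S^\circ_\infty$ I must show $y\,\eT_{k+1}(y)=\tfrac12\eT_k(y)+\tfrac12\eT_{k+2}(y)$ for all $y\in\mathbb{R}$: for $k\geq1$ this is \cref{prop:recurrence} applied with $n=k+1$, and for $k=0$ it is the same proposition with $n=1$ (where $\eT_0\equiv1$). Translating through the row labelling above, this places the two entries $1/2$ in column $2k+1$ of $J$ at rows $2k$ and $2k+2$ — for $k=0$ that is rows $0$ and $4$, etc. — which is precisely the displayed pattern.

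For the even columns of $S^\circ_\infty$ I use that $W_k(y)=(1-y^2)^{1/2}_+U_k(y)$ and that both sides of the desired relation vanish for $|y|\geq1$, so it is enough to work on $[-1,1]$, where the classical Chebyshev recurrence gives $2y\,U_k(y)=U_{k+1}(y)+U_{k-1}(y)$ for $k\geq1$ and $2y\,U_0(y)=U_1(y)$. Multiplying by $(1-y^2)^{1/2}_+$ yields $y\,W_k(y)=\tfrac12W_{k-1}(y)+\tfrac12W_{k+1}(y)$ for $k\geq1$ and $y\,W_0(y)=\tfrac12W_1(y)$. Since $W_j$ occupies row $2j+1$ of $S_\infty$, this puts the entries in column $2k$ of $J$ at rows $2k-1$ and $2k+3$ for $k\geq1$ (and the single entry $1/2$ at row $3$ for $k=0$), again matching $J$. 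Assembling the even and odd columns gives $y\,S^\circ_\infty(y)=S_\infty(y)\,J$, and the affine reduction of the first paragraph finishes the proof.

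The argument carries no analytic content: the only thing to be careful about is the interleaving of the $\eT$‑ and $W$‑indices in the column/row labelling, together with the two edge cases $k=0$ (which produce the single nonzero relation $y\,W_0=\tfrac12W_1$ and the extra $\eT_0$ contribution to $y\,\eT_1$, respectively). That bookkeeping is the main — and essentially the only — obstacle.
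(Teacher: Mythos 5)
Your proof is correct and takes essentially the same route as the paper: the paper likewise obtains the $W$-columns from the fact that the Jacobi matrix of $W_n$ coincides with that of $U_n$ (the weight $(1-y^2)^{1/2}_+$ passing through the recurrence, both sides vanishing off $[-1,1]$) and the $\eT$-columns from the recurrence of \cref{prop:recurrence}, with your affine reduction and row/column bookkeeping just making this explicit. One small slip: in the odd-column bookkeeping the second row index should be $2k+4$, not $2k+2$, as your own $k=0$ example (rows $0$ and $4$) already shows.
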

\begin{proof}
The Jacobi matrix of $W_n$ is the same as the Jacobi matrix for $U_n$. The entries of the columns associated with $\eT_n$ follow from \cref{prop:recurrence}.
\end{proof} 

\begin{remark}
The matrices  $\tens{E}_n$, $\tens{H}_n$, $\tens{D}^I_n$,  $\tens{A}^I_n$, and $\tens{J}$  are sparse. 
\end{remark}

\subsection{Single interval}
\label{sec:single-interval}
We now collect the final ingredients for the spectral method centred on a single interval at $[-1,1]$ to solve \cref{eq:fpde} for $x \in \mathbb{R}$. Consider the expansion of the right-hand side $f(x)$ in the dual sum space and truncate at degree $n$,
\begin{align}
f(x) \approx {{\bf S}}^*_n(x)  \vectt{f}_{n}^*, \;\; \vectt{f}_{n}^* 
 \in \mathbb{R}^{2n+7}.
\end{align}
Consider the matrix $ \tens{L}_n  \coloneqq \lambda \tens{E}_n + \mu \tens{H}_n + \eta \tens{D}^I_n + \tens{A}^I_n \in  \mathbb{R}^{(2n+7)\times(2n+3)}$.  We note that  $\tens{L}_n$  is a linear operator mapping from the sum space ${{\bf S}}_n(x)$ to the dual sum space ${{\bf S}}^*_n(x)$. However, $ \tens{L}_n  \in \mathbb{R}^{(2n+7)\times(2n+3)}$ is a rectangular matrix and the linear system
\begin{align}
 \tens{L}_n  \vectt{u}_{ n} =  \vectt{f}^*_{ n },  \;\;  \vectt{u}_n \in \mathbb{R}^{2n+3}, 
\end{align}
to find the coefficients of $u(x)$ in the truncated sum space is overdetermined. We wish to construct a method where the expansion of $u(x)$ is determined exactly i.e.~we want to avoid a least squares solution for $\vectt{u}_{ n }$. Let ${{\bf S}}^\circ_n(x)$ denote the sum space ${{\bf S}}_n(x)$ without the first block, i.e.~we drop the first function $\eT_0(x)$. We construct the following \emph{appended sum space}
\begin{align}
{{\bf S}}^+_n(x) \coloneqq
\begin{pmatrix}
\eT_0(x) \; \vline & v_0(x) & \tilde{u}_{-1}(x) \; \vline & v_1(x) & \tilde{u}_{0}(x) \; \vline & {{\bf S}}^\circ_n(x)
\end{pmatrix},
\end{align}
 where ${{\bf S}}^+_n \in L^{\infty}(\mathbb{R}) \times H^{1/2}(\mathbb{R})^{2n+6}$.  Here, the functions $v_n(x)$ and $\tilde{u}_{n}(x)$ satisfy
\begin{alignat}{2}
\mathcal{L}_{\lambda, \mu, \eta} \tilde{u}_{n}(x) &=  \eU_{n}(x), \label{eq:columns1}  \\ 
\mathcal{L}_{\lambda, \mu, \eta}  v_n(x) &=  V_{n}(x).
\label{eq:columns2} 
\end{alignat}

Let $ \tens{L}^\circ_n  \in \mathbb{R}^{(2n+7) \times (2n+2)}$ denote the matrix $ \tens{L}_n $ without the first column. By considering the map induced by $\mathcal{L}_{\lambda,\mu,\eta}$ from the appended sum space ${{\bf S}}^+_n(x)$ to the dual sum space ${{\bf S}}^*_n(x)$, we see that the solution of \cref{eq:fpde} can be approximated by solving
\begin{align}
 \tens{L}^+_n  \vectt{u}^+_{ n } =  \vectt{f}^*_{n}, \;\;  \vectt{u}^+_{ n } \in \mathbb{R}^{2n+7},  \label{eq:append2dual}
\end{align}
where $ \tens{L}^+_n  \in \mathbb{R}^{(2n+7)\times(2n+7)}$ is the following square matrix:
\begin{align}
 \tens{L}^+_n  \coloneqq 
\scriptsize
\setcounter{MaxMatrixCols}{50}
\begin{pmatrix}
-\lambda & 0 & 0 & 0 & 0 & \rvline& \\
0& 1 &0 &0 &0 & \rvline& \\
0 & 0 &1 &0 &0 & \rvline& \\
0&0 & 0 &1 & 0 & \rvline &\\
\lambda &0 &0 &0 &1& \rvline& \text{\huge$ \tens{L}^\circ_n $}  \\
0& 0 & 0 & 0 & 0 & \rvline& \\
\vdots &\vdots&\vdots&\vdots&\vdots& \rvline& \\
0& 0 & 0 & 0 & 0& \rvline& \\
\end{pmatrix}  \in \mathbb{R}^{(2n+7)\times(2n+7)}, \;\; \tens{L}^\circ_n  \in \mathbb{R}^{(2n+7) \times (2n+2)}. 
 \label{eq:Ap}
\end{align}
$ \tens{L}^+_n $ is constructed by pre-appending $ \tens{L}_n    \in \mathbb{R}^{(2n+7) \times (2n+3)} $ with four columns containing one nonzero entry each and commuting the column associated with $\eT_0(x)$ to be first. The entries in the new columns have a value of $1$ and are in the positions $(2,2)$, $(3,3)$, $(4,4)$, $(5,5)$. By solving the linear system \cref{eq:append2dual}, we conclude that $u(x) \approx {{\bf S}}^+_n(x) \vectt{u}^+_{ n }$. In order to realize the values of ${{\bf S}}^+_n(x) \vectt{u}^+_{ n }$, we require approximations for the solutions in equations  \cref{eq:columns1} and \cref{eq:columns2}. These can be found using a fast Fourier transform (FFT) \cite{Cooley1965}, specialized quadrature formulas or explicit expressions. We give more details below. 

\begin{remark}
The solutions $\tilde{u}_n(x)$ and $v_n(x)$ are not dependent on the right-hand side $f$ of \cref{eq:fpde}, although they are dependent on the constants $\lambda$, $\mu$, and $\eta$. 
\end{remark}

\begin{remark}
\label{rem:stabiliseL}
The choice of $v_0(x),  \tilde{u}_{-1}(x), v_1(x)$ and $\tilde{u}_{0}(x)$ as the additional functions is not the only option. Indeed, if the goal was to improve the conditioning of  $\tens{L}^+_n$,  then a better choice would be $\tilde{u}_{n+1}(x)$ and $v_{n+2}(x)$ rather than $\tilde{u}_{0}(x)$ and $v_1(x)$.  In this case, the conditioning of  $\tens{L}^+_n$  only grows linearly with $n$, is robust for large parameter ranges of $\lambda, \mu$ and $\eta$, and there exists a diagonal preconditioner such that the condition number is independent of $n$ as described in \cref{sec:conditioning}.  With the presented choice, the conditioning degrades if both $\mu \to 0$ and $\lambda \to 0$. The disadvantage of  $\tilde{u}_{n+1}(x)$ and $v_{n+2}(x)$ is that if multiple solves are required with different degrees $n$, then additional work is required to compute the required additional functions. Furthermore, in numerical experiments, we observed that the approximate identity mapping from the appended sum space ${{\bf S}}^+_n(x)$ to the dual sum space ${{\bf S}}^*_n(x)$ is unstable when using  $\tilde{u}_{n+1}(x)$ and $v_{n+2}(x)$ as the additional functions. Hence, for discretized time-dependent problems where we are required to map the current solution iterate expanded in the appended sum space to the expansion in the dual sum space (for the right-hand side), this poses a distinct issue. The choice of the additional functions is context dependent. 
\end{remark}

\begin{proposition}[Fourier transforms of $V_n(x)$]
\label{prop:FourierTransformsV}
The weighted Chebyshev polynomials, $V_n(x)$, $n\geq 0$, have the Fourier transforms
\begin{align}
\mathcal{F}[V_n](\omega) &= (-\mi)^n \pi J_n(\omega), \label{eq:FourierVk}
\end{align}
where $J_n$, $n \in \mathbb{N}_0$, denote the Bessel functions of the first kind \cite[Sec.~10.2]{OlverNIST}.  
\end{proposition}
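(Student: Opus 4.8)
The plan is to compute the Fourier transform of $V_n(x) = (1-x^2)_+^{-1/2} T_n(x)$ directly from the definition $\mathcal{F}[V_n](\omega) = \int_{\mathbb{R}} V_n(x) \me^{-\mi \omega x}\dx = \int_{-1}^1 (1-x^2)^{-1/2} T_n(x) \me^{-\mi \omega x}\dx$, since $V_n$ is supported on $[-1,1]$ and, despite the endpoint singularity, is in $L^1(\mathbb{R})$ (the weight is integrable), so the classical formula in \cref{def:fourier} applies. First I would substitute $x = \cos\theta$, $\theta \in [0,\pi]$, which turns the weighted integral into $\int_0^\pi T_n(\cos\theta) \me^{-\mi\omega\cos\theta}\,\mathrm{d}\theta = \int_0^\pi \cos(n\theta)\,\me^{-\mi\omega\cos\theta}\,\mathrm{d}\theta$, using $T_n(\cos\theta) = \cos(n\theta)$ and that $\mathrm{d}x = -\sin\theta\,\mathrm{d}\theta$ cancels the $(1-x^2)^{-1/2} = 1/\sin\theta$ factor.

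The key step is then to recognize this trigonometric integral as a standard integral representation of the Bessel function. Specifically, one classical identity (e.g.\ \cite[Sec.~10.9]{OlverNIST}) states
\begin{align}
J_n(\omega) = \frac{1}{\pi}\int_0^\pi \cos(n\theta - \omega\sin\theta)\,\mathrm{d}\theta,
\end{align}
and an equivalent form obtained by the shift $\theta \mapsto \pi/2 - \theta$ (or directly from $\me^{\mi z\cos\theta} = \sum_k \mi^k J_k(z)\me^{\mi k\theta}$) gives
\begin{align}
\int_0^\pi \cos(n\theta)\,\me^{\mi\omega\cos\theta}\,\mathrm{d}\theta = \pi \,\mi^n J_n(\omega).
\end{align}
Taking complex conjugates (and using that $J_n(\omega)$ is real for real $\omega$, together with the parity $J_n(-\omega) = (-1)^n J_n(\omega)$) converts $\me^{\mi\omega\cos\theta}$ to $\me^{-\mi\omega\cos\theta}$ and yields the factor $(-\mi)^n$, giving $\mathcal{F}[V_n](\omega) = (-\mi)^n \pi J_n(\omega)$ as claimed. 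I would present this cleanly by invoking the Jacobi--Anger expansion $\me^{-\mi\omega\cos\theta} = \sum_{k\in\mathbb{Z}} (-\mi)^k J_k(\omega)\me^{\mi k\theta}$, multiplying by $\cos(n\theta) = \tfrac12(\me^{\mi n\theta}+\me^{-\mi n\theta})$, integrating term by term over $[0,\pi]$, and using $J_{-k} = (-1)^k J_k$ so that the $k = n$ and $k = -n$ contributions combine.

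The main obstacle is essentially bookkeeping rather than a genuine difficulty: one must be careful that integrating the Jacobi--Anger series term by term over $[0,\pi]$ is justified (uniform convergence on the compact $\theta$-interval for fixed $\omega$ suffices, since $\sum_k |J_k(\omega)| < \infty$), and one must handle the $n=0$ case and the relation $J_{-n} = (-1)^n J_n$ correctly so that the surviving terms assemble into a single $(-\mi)^n \pi J_n(\omega)$ with no stray factor of $2$ or sign error. Alternatively, if one prefers to avoid series manipulations entirely, the identity $\int_0^\pi \cos(n\theta)\me^{\mi z\cos\theta}\mathrm{d}\theta = \pi\mi^n J_n(z)$ can simply be cited from a standard reference on Bessel functions, in which case the proof reduces to the change of variables and a citation.
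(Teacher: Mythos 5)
Your proposal is correct and takes essentially the same route as the paper, whose proof is just a citation of the classical computation (Parseval's integral for Bessel functions plus recurrences in Watson), i.e.\ the substitution $x=\cos\theta$ followed by the standard representation $\int_0^\pi\cos(n\theta)\,\me^{\mi z\cos\theta}\,\mathrm{d}\theta=\pi\,\mi^n J_n(z)$. One caution on your Jacobi--Anger variant: on $[0,\pi]$ the exponentials $\me^{\mi k\theta}$ are not orthogonal to $\cos(n\theta)$ (terms with $k\pm n$ odd give nonzero integrals), and these extra contributions only cancel after pairing $k$ with $-k$ via $J_{-k}=(-1)^k J_k$, or by extending the integral to $[0,2\pi]$ using the symmetry $\theta\mapsto 2\pi-\theta$, so the citation route you offer at the end is the cleaner way to finish.
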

\begin{proof}
This is a known result and follows from an application of Parseval's integral and recurrence relationships between Bessel functions \cite[Ch.~13]{Watson1922}.
\end{proof}

\begin{proposition}[Fourier transforms of $\eU_n(x)$]
\label{prop:FourierTransformsU}
The extended Chebyshev functions $\eU_{-1}(x)$, $\eU_0(x)$ have the following Fourier transforms,
\begin{align}
\mathcal{F}[\eU_{-1}](\omega) &= \mi  \pi \mathrm{sgn}(\omega) J_0(\omega), \;\;  \;\; \mathcal{F}[\eU_0](\omega) = \pi J_1(|\omega|).
\end{align}
\end{proposition}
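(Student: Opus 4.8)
The plan is to compute both transforms straight from \cref{def:fourier}, using parity to reduce each to a single Bessel-type integral, while cross-checking against the multiplier identities for $\Hil$ and $(-\Delta)^{1/2}$ already at hand.

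For $\eU_{-1}$, which is odd, vanishes on $[-1,1]$, equals $-\sgn(x)(x^2-1)^{-1/2}$ for $|x|>1$, and lies in $L^p(\mathbb{R})$ for $p\in(1,2)$ so that $\mathcal{F}[\eU_{-1}]$ is a genuine function: by oddness $\mathcal{F}[\eU_{-1}](\omega)=-\mi\int_{\mathbb{R}}\eU_{-1}(x)\sin(\omega x)\dx=2\mi\int_1^\infty\frac{\sin(\omega x)}{\sqrt{x^2-1}}\dx$, and the Mehler--Sonine integral (the $\nu=0$ case of the standard representation of $J_0$, extended from $\omega>0$ by oddness) gives $\int_1^\infty\frac{\sin(\omega x)}{\sqrt{x^2-1}}\dx=\tfrac{\pi}{2}\sgn(\omega)J_0(|\omega|)$, so $\mathcal{F}[\eU_{-1}](\omega)=\mi\pi\sgn(\omega)J_0(|\omega|)=\mi\omega\pi J_0(\omega)/|\omega|$ since $J_0$ is even. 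Equivalently — and this is the route I would actually write — one notes $\eU_{-1}=-\Hil[V_0]$ (the $n=0$ endpoint of the Corollary on Hilbert transforms of $V^I_n$ and $\eU^I_n$, or directly from the finite Hilbert transform of the Chebyshev weight $(1-x^2)^{-1/2}_+$), then combines \cref{eq:fourier-hilbert} with \cref{prop:FourierTransformsV} at $n=0$.

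For $\eU_0$, which by \cref{def:eU0} is even and equals $\mathbf{1}_{[-1,1]}$ plus the $L^1$ function $g(x):=(1-|x|(x^2-1)^{-1/2})\mathbf{1}_{\{|x|>1\}}(x)$ (integrable $(|x|-1)^{-1/2}$ singularity at $|x|=1$, $O(|x|^{-2})$ decay): here $\mathcal{F}[\mathbf{1}_{[-1,1]}](\omega)=2\sin(\omega)/\omega=2\sin(|\omega|)/|\omega|$, while for $\mathcal{F}[g]$ I would invoke \cref{prop:halfLaplacian} at $I=[-1,1]$, $n=0$, giving $\eU_0=(-\Delta)^{1/2}[W_0]$; since $W_0,\eU_0\in L^1(\mathbb{R})$, \cref{eq:fourier-frac-laplace} yields $\mathcal{F}[\eU_0](\omega)=|\omega|\mathcal{F}[W_0](\omega)$, and by \cref{eq:prop:maps3} $W_0=\tfrac12(V_0-V_2)$, so \cref{prop:FourierTransformsV} with $J_0+J_2=(2/\omega)J_1$ gives $\mathcal{F}[W_0](\omega)=\tfrac{\pi}{2}(J_0(\omega)+J_2(\omega))=\pi J_1(\omega)/\omega$, hence $\mathcal{F}[\eU_0](\omega)=\pi\sgn(\omega)J_1(\omega)=\pi J_1(|\omega|)$ as $J_1$ is odd. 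Since $2\sin(\omega)/\omega-2\sin(|\omega|)/|\omega|\equiv0$, this is exactly the claimed formula; alternatively one reads off $\mathcal{F}[g](\omega)=\pi J_1(|\omega|)-2\sin(|\omega|)/|\omega|$ and adds the box term back.

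The only genuine obstacle is that neither function is in $L^1(\mathbb{R})$ under a naive split ($\eU_{-1}$ decays like $|x|^{-1}$; writing $\eU_0$'s tail as $\mathbf{1}_{\{|x|>1\}}-|x|(x^2-1)^{-1/2}\mathbf{1}_{\{|x|>1\}}$ separates two non-integrable pieces), so the manipulations require justification. The cleanest resolution is to stay with the multiplier identities, applied to $\eU_{-1}=-\Hil[V_0]$ and $\eU_0=(-\Delta)^{1/2}[\tfrac12(V_0-V_2)]$ — all inputs covered by \cref{lem:sum-space-regularity} and \cref{prop:FourierTransformsV} — whereas the direct-integral alternative forces one to handle the conditionally convergent Mehler--Sonine integral and to keep the cancellation inside $g$ intact (e.g.\ via the absolutely convergent $\int_1^\infty g(x)\cos(\omega x)\dx$, or an Abel regularization) instead of splitting it.
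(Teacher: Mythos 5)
Your derivation is correct, and it supplies precisely the computation that the paper leaves implicit (its proof is the single sentence that the result ``follows by a direct calculation''). Both of your routes check out against the paper's conventions: for $\eU_{-1}$, the Mehler--Sonine evaluation gives $\mi\pi\,\sgn(\omega)J_0(|\omega|)$, which is the stated formula, and the multiplier route $\mathcal{F}[\eU_{-1}]=-\mathcal{F}[\Hil V_0]=\mi\,\sgn(\omega)\,\pi J_0(\omega)$ agrees; for $\eU_0$, the chain $\eU_0=(-\Delta)^{1/2}W_0$, $W_0=\tfrac12(V_0-V_2)$, $J_0+J_2=\tfrac{2}{\omega}J_1$ gives $\pi J_1(|\omega|)$, and your observation that the extra terms $2\sin(\omega)/\omega-2\sin(|\omega|)/|\omega|$ in the stated formula vanish identically is correct, so there is no discrepancy. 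Your closing remark is also the right call: since $\eU_{-1}\notin L^1(\mathbb{R})$, the naive use of \cref{def:fourier} (which the paper itself warns against for $L^p\setminus L^1$ functions) needs either a regularization argument or, more cleanly, the multiplier identities applied to functions covered by \cref{lem:sum-space-regularity} and \cref{prop:FourierTransformsV}; for $\eU_0$ your split $\mathbf{1}_{[-1,1]}+g$ with $g\in L^1$ is already unconditional. One small caveat: the identity $\eU_{-1}=-\Hil[V_0]$ is not literally the ``$n=0$ endpoint'' of the paper's corollary, which only covers $\Hil[V_{n+1}]=-\eU_n$ for $n\geq 0$ (its proof uses $\eU_n=\Hil\fdx[\tfrac{b-a}{2(n+1)}\eT_{n+1}]$ and does not extend to $n=-1$); you should instead cite, or verify directly, the classical finite Hilbert transform of the Chebyshev weight $(1-x^2)^{-1/2}_+$, as you indicate in your parenthetical alternative. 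With that reference made explicit, the multiplier version of your argument is complete and arguably more self-contained than the paper's unstated ``direct calculation.''
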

By taking the Fourier transform on both sides in  \cref{eq:columns1} and  \cref{eq:columns2}, utilizing Propositions \labelcref{prop:FourierTransformsV} and \labelcref{prop:FourierTransformsU} as well as \cref{eq:fourier-frac-laplace}, \cref{eq:fourier-hilbert}, and $\mathcal{F}[\fdx u](\omega) = \mi \omega \mathcal{F}[u](\omega)$, we observe that
\begin{align}
\mathcal{F}[\tilde{u}_{-1}](\omega) &=  \mi \pi(\lambda - \mi \mu \, \mathrm{sgn}(\omega) + \mi \eta\omega + |\omega|)^{-1} |\omega|^{-1} \omega  J_0(\omega), \label{eq:FeU-1} \\ 
\mathcal{F}[\tilde{u}_{0}](\omega) &=  \pi (\lambda - \mi \mu \, \mathrm{sgn}(\omega) + \mi \eta\omega + |\omega|)^{-1} J_1(|\omega|),\\
\mathcal{F}[v_n](\omega) &=  (-\mi)^n \pi(\lambda - \mi \mu \, \mathrm{sgn}(\omega) + \mi \eta\omega + |\omega|)^{-1} J_n(\omega). \label{eq:FVk}
\end{align}
The continuous inverse Fourier transforms to compute the solutions $\tilde{u}_{-1}(x)$, $\tilde{u}_{0}(x)$, $v_0(x)$ and $v_1(x)$ can be approximated via specialized quadrature rules, an FFT, or explicit expressions for certain ranges of $x$.  The FFT as an approximation of the continuous inverse Fourier transform is equivalent to the trapezoid rule applied to the Fourier transform. Particularly since the integrands are oscillatory, if the trapeziums are too wide (equivalent to an FFT with too few samples), the approximation of the continuous inverse Fourier transform is poor, even if the FFT itself is computed to a high precision. The FFT approximation is discussed in more detail in \cref{sec:FFT}. If the trapezoid rule is not sufficiently accurate then we opt to utilize Mathematica's \texttt{NIntegrate} which uses quadrature rules that are very robust for oscillatory integrands which (at the time of writing) is the routine that best balanced the speed of the quadrature with the accuracy obtained, outperforming Julia in-house alternatives for this purpose. For more details of how to use a Julia wrapper for \texttt{NIntegrate} routine in this context, we refer the reader to \texttt{src/mathematica.jl} in \cite{SumSpacesMathLink}. Alternatively one could simply increase the sample size of the FFT or use a more accurate FFT-accelerated approximation of the continuous inverse Fourier transform such as those described in \cite{anderson2020high}. 

We do not have exact expressions for the Fourier transforms of $\eU_n(x)$ for $n \geq 1$. However, this can be overcome by first applying a Hilbert transform to the equations in \cref{eq:columns1} to obtain, for $n \geq -1$,
\begin{align}
\left(\lambda \Hil - \mu \mathcal{I} + \eta (-\Delta)^{1/2} - \fdx \right)[\tilde{u}_{n}](x) &= \Hil[\eU_n](x)  = V_{n+1}(x). \label{eq:uUk}
\end{align}
By taking the Fourier transform of \cref{eq:uUk}, we find that
\begin{align}
\mathcal{F}[\tilde{u}_{n}](\omega) &=  (-\mi)^{n+1} \pi (-\mi \lambda \, \mathrm{sgn}(\omega) - \mu + \eta |\omega| - \mi \omega)^{-1} J_{n+1}(\omega). \label{eq:uUk2}
\end{align}

In \cref{sec:special-cases} we discuss edge cases of \cref{eq:fpde} and how they are treated in our numerical method. In summary, when $\lambda = \mu = 0$ and $\eta \in \mathbb{R}$, the additional functions are no longer required and their associated columns (and related rows) are removed from \cref{eq:Ap}. In particular the columns associated with $\eT_0(x)$, $v_0(x)$, $\tilde{u}_{-1}(x)$, $v_1(x)$, and $\tilde{u}_{0}(x)$, and the rows associated with $\eU_{-2}(x)$, $V_0(x)$, $\eU_{-1}(x)$, $V_{n+2}(x)$, and $\eU_{n+1}(x)$ are removed reducing $ \tens{L}^+_n $ from a $(2n+7)\times(2n+7)$ matrix to a $(2n+2)\times(2n+2)$ matrix. The exact mappings are still conserved. When $\lambda = 0$ but $|\mu|>0$ and $\eta \in \mathbb{R}$, we keep the additional functions, although we advise using the additional functions $v_{n+2}(x)$ and $\tilde{u}_{n+1}(x)$ instead of $v_1(x)$ and $\tilde{u}_{0}(x)$ to alleviate ill-conditioning.

\subsection{Multiple intervals}
\label{sec:multiple-intervals}

Although the sum space is dense on the interval it is centred on, it is not dense in $\mathbb{R}$ and, therefore, cannot approximate general solutions. By combining intervals together we enlarge the subset of density in $\mathbb{R}$. Moreover, in some applications, it can be helpful to decompose the domain into multiple intervals. This can be in regions where (spatially varying) coefficients or right-hand sides $f$ have discontinuities best accounted for by approximating them with sum spaces centred over two or more intervals. Experimentally, we also found that the tails of a function $f$ outside the intervals were better approximated when using the combination of sum spaces centred at multiple intervals. Examples of low-order multiple-interval sum space functions are given in \cref{fig:multiple-interval-spaces}.  

In this section, we will discuss how to combine sum spaces centred at multiple intervals together. We will show that the operator $\mathcal{L}_{\mu, \lambda, \eta}$ decouples across the different affine transformed sum spaces and this results in a linear system with a block diagonal matrix. Hence, each block can be solved separately resulting in a highly parallelizable method. 

\begin{figure}[h]
\centering
\subfloat[{$\eT_1(x)$ centred at $[-3,-1]$, $[-1,1]$ and $[1,3]$.}]{\includegraphics[width =0.49 \textwidth]{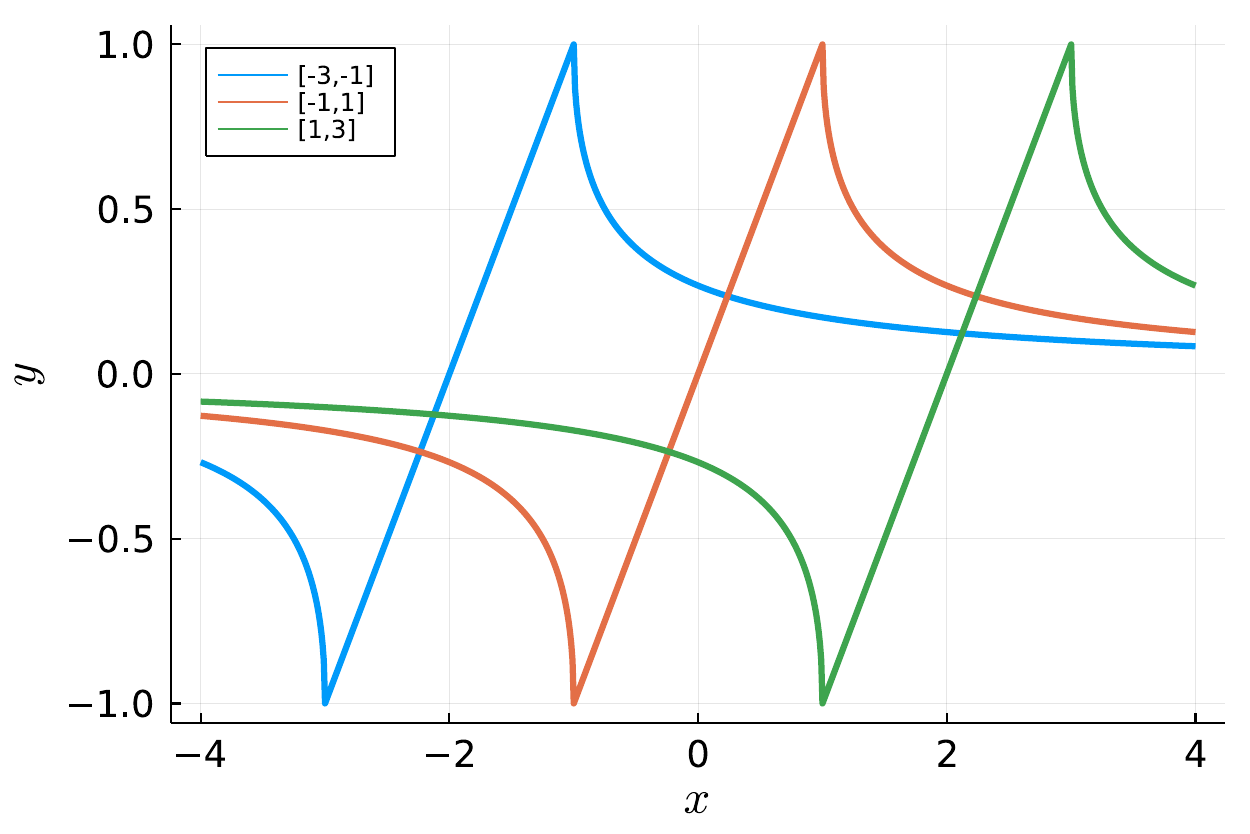}}
\subfloat[{$W_0(x)$ centred at $[-3,-1]$, $[-1,1]$ and $[1,3]$.}]{\includegraphics[width =0.49 \textwidth]{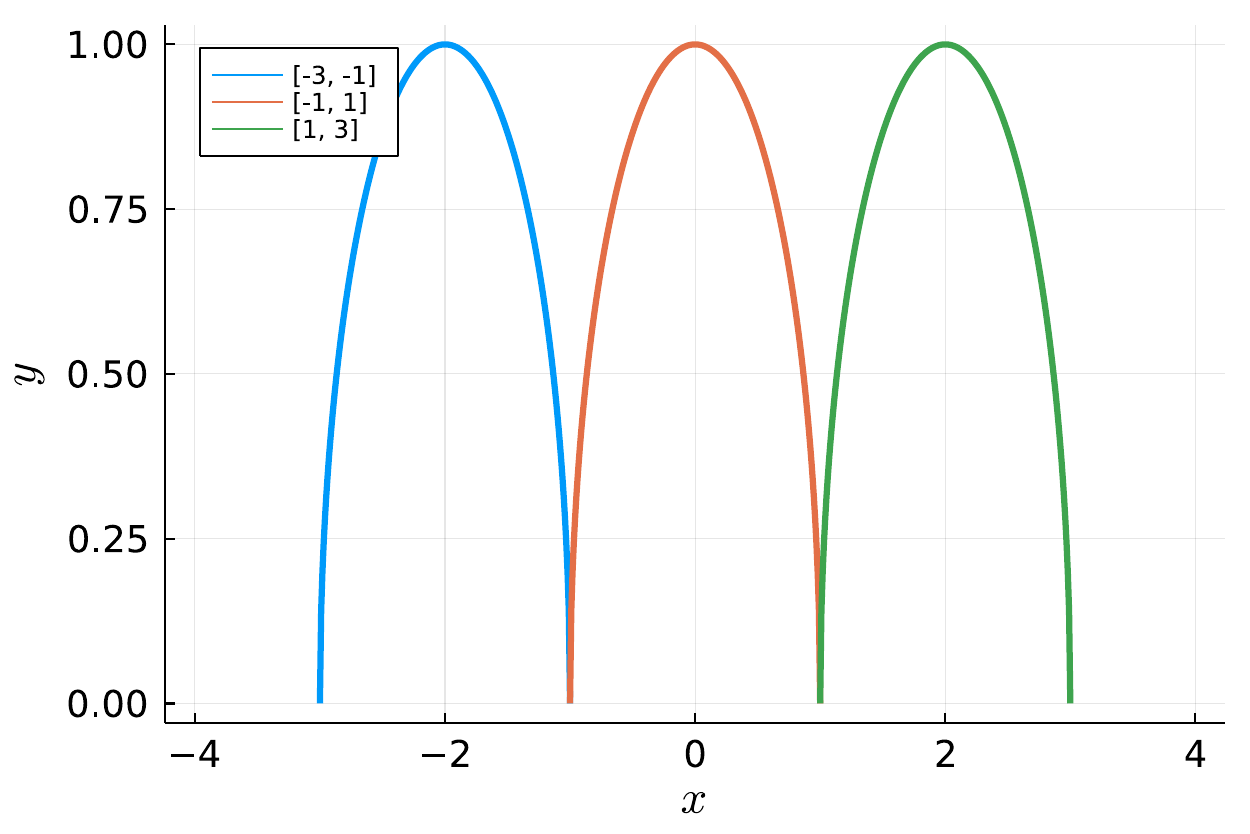}}
\caption{Plots of $\eT_1(x)$ and $W_0(x)$ centred at the three different intervals $[-3,-1]$, $[-1,1]$ and $[1,3]$. We note that $\eT_1(x)$ has global support and those centred on different intervals always overlap. Whereas, $W_0(x)$ has compact support on the interval it is centred on.}\label{fig:multiple-interval-spaces}
\end{figure}

We fix the closed and bounded intervals $I_k = [a_k, b_k]$, $k=1,\dots,K$, which may be connected but not overlap (except at a point). Consider a truncation of the expansion of a function, $g(x)$, $x \in \mathbb{R}$, in the appended multiple-interval sum space:
\begin{align}
\begin{split}
g(x) &\approx \tilde{c}_0 \eT_0(x)\\
&\indent + \left.\sum_{k=1}^K \right(
c_{v^{I_k}_{0}}v^{I_k}_0(x) 
+c_{\tilde{u}^{I_k}_{-1}}\tilde{u}^{I_k}_{-1}(x) 
+ c_{v^{I_k}_{1}}v^{I_k}_1(x)
+ c_{\tilde{u}^{I_k}_{0}}\tilde{u}^{I_k}_{0}(x) \\
&\indent \indent+\left. \sum_{j=1}^{n_k} [\tilde{c}^{I_k}_j \eT^{I_k}_j(x) + c^{I_k}_j W^{I_k}_{j-1}(x)] \right).
\end{split}
\label{eq:f-element-expansion}
\end{align}
Let  ${\bf S}_{n_k}^{\circ,I_k,+}(x) \in H^{1/2}(\mathbb{R})^{2n_k+6}$  denote the appended sum space centred on the interval $I_k$ not including the constant term $\eT^{I_k}_0(x)$. Let $\vect{n} = (n_1,\dots, n_K)$, $N = \sum_{k=1}^K n_k$, and $\vect{I} = (I_1,\dots,I_K)$. Then, in quasimatrix notation, \cref{eq:f-element-expansion} can be rewritten as $g(x) \approx \Skp(x) \vectt{c}^{\vect{I},+}_{\vect{n}}$ where 
\begin{align}
\setcounter{MaxMatrixCols}{50}
\Skp(x) &= 
(
\eT_0(x) \; | \;  \underbrace{{\bf S}_{n_1}^{\circ,I_1,+}(x)}_{ \in H^{1/2}(\mathbb{R})^{2n_1 + 6}, }  \; | \; \cdots \;\;\; |  \;  \underbrace{ {\bf S}_{n_K}^{\circ,I_K,+}(x)}_{ \in H^{1/2}(\mathbb{R})^{2n_{K} + 6}, }
),\\
 {\bf S}_{n_k}^{\circ,I_k,+}(x)  &=
\begin{pmatrix}
v^{I_k}_0(x) & \tilde{u}^{I_k}_{-1}(x) & v^{I_k}_1(x) & \tilde{u}^{I_k}_{0}(x)  \; \vline &  {\bf S}_{n_k}^{\circ,I_k}(x)
\end{pmatrix}, \label{eq:SL1I+}\\
 {\bf S}_{n_k}^{\circ,I_k}(x)  &=
\begin{pmatrix}
W^{I_k}_{0}(x) & \eT^{I_k}_1(x) \; \vline & \cdots \;\;\; \vline & W^{I_k}_{n_k}(x) & \eT^{I_k}_{n_k+1}(x)
\end{pmatrix} \\
\vectt{c}^{\vect{I},+}_{\vect{n}} &=
\begin{pmatrix}
\tilde{c}_0  \; \vline & (\vectt{c}_{n_1}^{\circ,I_1,+})^\top \; \vline & \cdots \;\;\; \vline  & (\vectt{c}_{n_K}^{\circ,I_K,+})^\top
\end{pmatrix}^\top  \in \mathbb{R}^{2N + 6K + 1},\\
\vectt{c}_{n_k}^{\circ,I_k,+} &= 
\begin{pmatrix}
c_{v^{I_k}_{0}} & c_{\tilde{u}^{I_k}_{-1}} & c_{v^{I_k}_{1}} & c_{\tilde{u}^{I_k}_{0}}  \; \vline & (\vectt{c}_{n_k}^{\circ,I_k})^\top 
\end{pmatrix}^\top  \in \mathbb{R}^{2n_k + 6}, \\
\vectt{c}_{n_k}^{\circ,I_k}  &=
\begin{pmatrix}
c^{I_k}_{1} & \tilde{c}^{I_k}_1 \; \vline & \cdots \;\;\; \vline & c^{I_k}_{n_k+1} & \tilde{c}^{I_k}_{n_k+1}
\end{pmatrix}^\top  \in \mathbb{R}^{2n_k + 2}.  
\end{align}

Unlike in ${{\bf S}}^+_n(x)$, the sum spaces centred on each interval are ordered sequentially in $\Skp(x)$ rather than interleaving the Chebyshev polynomials/functions of the same degree across all the intervals. This is because each block in $\Skp(x)$ directly correlates to a block diagonal in the matrix after discretization, allowing for a clear decoupling of the problem. We denote the multiple-interval sum space by $\Sk$ which is constructed as $\Skp$ but without the initial four terms $v^{I_k}_0(x), \tilde{u}^{I_k}_{-1}(x), v^{I_k}_1(x), \tilde{u}^{I_k}_{0}(x)$ at the start of each block in \cref{eq:SL1I+}.

Similarly, we define the dual multiple-interval sum space $\Skd(x)$ as
\begin{align}
\Skd(x) &= 
(
\eU_{-2}^{I_1}(x) \; | \; \underbrace{{ \bf S }_{n_1}^{\circ,I_1,*}(x)}_{ H^{-1/2}(\mathbb{R})^{2n_1+6} } \; |\; \cdots \;\;\; |\;   \underbrace{{ \bf S }_{n_K}^{\circ,I_K,*}(x)}_{ H^{-1/2}(\mathbb{R})^{2n_K+6} }
),\\
{ \bf S }_{n_k}^{\circ,I_k,*}(x)  &=
\begin{pmatrix}
V_0^{I_k}(x) & \eU^{I_k}_{-1}(x) \; \vline & \cdots \;\;\; \vline & V^{I_k}_{n_k+2}(x) & \eU^{I_k}_{n_k+1}(x)
\end{pmatrix}, 
\end{align}
 such that  ${\bf S}_{n_k}^{\circ,I_k,*}(x)  \in H^{-1/2}(\mathbb{R})^{2n_k+6}$. 

From \cref{prop:quasimatrix-operators}, we note that the operator $\mathcal{L}_{\mu,\lambda,\eta}$ maps a sum space centred on an interval $I$, to the dual sum space centred on the same interval. This observation leads to the following proposition. 
\begin{proposition}[Multiple-interval quasimatrix operators]
\label{prop:element-mappings}
\begin{align}
\begin{split}
\Sk(x) &= \Skd(x)  \tens{E}_{\vect{n}},  \; \Hil [\Sk](x) = \Skd(x)  \tens{H}_{\vect{n}}, \\
\; \fdx [\Sk](x) &= \Skd(x)  \tens{D}^{\vect{I}}_{\vect{n}},  \; \text{and} \; (-\Delta)^{1/2} [\Sk](x) = \Skd(x)  \tens{A}^{\vect{I}}_{\vect{n}}, 
\end{split}
\end{align}
where  $\tens{E}_{\vect{n}}, \tens{H}_{\vect{n}}, \tens{D}^{\vect{I}}_{\vect{n}}, \tens{A}^{\vect{I}}_{\vect{n}} \in \mathbb{R}^{(1+2N+6K) \times (1+2N+2K)}$  are the following block diagonal matrices: 
\begin{alignat}{2}
\tens{E}_{\vect{n}} &=
\begin{pmatrix}
\tens{E}_{n_1} &       &            &\\
   & \tens{E}_{n_2}^{\dagger} &            &\\
   &       & \ddots  &\\
   &       &            & \tens{E}_{n_K}^{\dagger}\\
\end{pmatrix}, \;\;
&&\tens{H}_{\vect{n}}=
\begin{pmatrix}
\tens{H}_{n_1} &       &            &\\
   & \tens{H}_{n_2} ^{\dagger} &            &\\
   &       & \ddots  &\\
   &       &            & \tens{H}_{n_K} ^{\dagger}\\
\end{pmatrix} \label{eq:element-E-H}\\
\tens{D}^{\vect{I}}_{\vect{n}} &=
\begin{pmatrix}
\tens{D}^{I_1}_{n_1} &       &            &\\
   & \tens{D}^{I_2,\dagger}_{n_2}  &            &\\
   &       & \ddots  &\\
   &       &            & \tens{D}^{I_K,\dagger}_{n_K} \\
\end{pmatrix}, \;\;
&&\tens{A}^{\vect{I}}_{\vect{n}} =
\begin{pmatrix}
\tens{A}^{I_1}_{n_1} &       &            &\\
   & \tens{A}^{I_2,\dagger}_{n_2}&            &\\
   &       & \ddots  &\\
   &       &            & \tens{A}^{I_K,\dagger}_{n_K} \\
\end{pmatrix}, 
\end{alignat} 
where the matrices  $\tens{E}_{n_k}^\dagger, \tens{H}_{n_k}^\dagger, \tens{D}_{n_k}^{I_k, \dagger}$, and $\tens{A}_{n_k}^{I_k, \dagger}$, $k=2,\dots,K$,  are the matrices  $\tens{E}_{n_k}, \tens{H}_{n_k}, \tens{D}_{n_k}^{I_k}$, and $\tens{A}_{n_k}^{I_k}$,  respectively, as defined in \cref{prop:quasimatrix-operators}, but with the first row and column removed. 
\end{proposition}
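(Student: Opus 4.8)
The plan is to deduce the multiple-interval identities directly from the single-interval ones in \cref{prop:quasimatrix-operators} by combining linearity with the fact that none of the four operators mixes functions attached to different intervals. Each operator $\mathcal{O} \in \{\mathcal{I}, \Hil, \fdx, (-\Delta)^{1/2}\}$ acts on the quasimatrix $\Sk(x)$ columnwise, so $\mathcal{O}[\Sk](x)$ is the quasimatrix of the images of the columns of $\Sk(x)$. Every such column is either the global constant $\eT_0(x)$ or one of the affine-transformed functions $W_j^{I_k}(x)$, $\eT_j^{I_k}(x)$ centred on a single interval $I_k$; by \cref{prop:Hilbert-W}, \cref{prop:maps}, and \cref{prop:halfLaplacian} (extended to scaled and shifted Chebyshev functions as noted in \cref{rem:affine-transform}), the image of a function centred on $I_k$ is a finite combination of the dual functions $V_m^{I_k}$, $\eU_m^{I_k}$ centred on that same $I_k$, while the image of $\eT_0$ is represented within the dual functions attached to the first block (and vanishes for $\Hil$, $\fdx$, $(-\Delta)^{1/2}$). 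This absence of cross-interval coupling is exactly what forces block-diagonal representing matrices, one block per interval.

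For the bookkeeping I would split $\Sk(x)$ as the single column $\eT_0(x)$ followed by the blocks $S_{n_k}^{\circ,I_k}(x)$, and $\Skd(x)$ as $\eU_{-2}^{I_1}(x)$ followed by the blocks $S_{n_k}^{\circ,I_k,*}(x)$. For $k=1$ one has $\bigl(\eT_0 \mid S_{n_1}^{\circ,I_1}\bigr) = S^{I_1}_{n_1}$ and $\bigl(\eU_{-2}^{I_1} \mid S_{n_1}^{\circ,I_1,*}\bigr) = S^{I_1,*}_{n_1}$, so \cref{prop:quasimatrix-operators} gives the first diagonal block as the full matrix $E$ (respectively $H$, $D^{I_1}$, $A^{I_1}$). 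For $k \geq 2$ the block $S_{n_k}^{\circ,I_k}$ is $S^{I_k}_{n_k}$ with its first column $\eT_0^{I_k}$ deleted, and reading off \cref{prop:quasimatrix-operators} on the remaining columns yields the corresponding single-interval matrix with its first column removed, mapping into $S^{I_k,*}_{n_k}$. The decisive observation is that the first row of this submatrix vanishes: for $H$, $D^I$, $A^I$ the whole first row is already zero, and for $E$ its unique nonzero first-row entry lay in the (now deleted) first column. Hence $\eU_{-2}^{I_k}$ never occurs in the image, the block may be taken to map into $S_{n_k}^{\circ,I_k,*}$, and it equals $E^\dagger$ (respectively $H^\dagger$, $D^{I_k,\dagger}$, $A^{I_k,\dagger}$). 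Assembling the diagonal blocks and discarding the off-diagonal ones (which vanish by the decoupling) produces $\vectt{E}$, $\vectt{H}$, $\vectt{D}^{\vect{I}}$, $\vectt{A}^{\vect{I}}$.

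To finish, I would check the asserted sizes by a direct count: $\Sk$ has $1 + \sum_{k}(2n_k+2) = 1 + 2N + 2K$ columns and $\Skd$ has $1 + \sum_{k}(2n_k+6) = 1 + 2N + 6K$ columns, with the first block contributing $(2n_1+7)\times(2n_1+3)$ and each later block $(2n_k+6)\times(2n_k+2)$; these sum to $(1+2N+6K)\times(1+2N+2K)$ as claimed. As in the single-interval statement, the identities for $\fdx$ and $(-\Delta)^{1/2}$ hold for $x \in \mathbb{R} \setminus \bigcup_{k=1}^K\{a_k,b_k\}$. The only genuinely delicate point, and the step I expect to be the main obstacle, is this row/column bookkeeping---verifying that the ``dagger'' truncation is an exact identity rather than a mere projection because the relevant first rows vanish, and correctly tracking the shared status of $\eT_0$ and $\eU_{-2}^{I_1}$ so that the first diagonal block retains the full matrices. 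The decoupling across intervals itself is immediate from the cited single-interval propositions.
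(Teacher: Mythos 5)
Your proof is correct and follows essentially the same route the paper intends: the proposition is stated in the paper without an explicit proof, as an immediate consequence of \cref{prop:quasimatrix-operators} together with the observation that $\mathcal{L}_{\lambda,\mu,\eta}$ maps each interval's sum space into that same interval's dual sum space. Your additional bookkeeping---checking that the ``dagger'' truncation is exact because the only nonzero first-row entry of $E$ sits in the deleted $\eT_0^{I_k}$ column (and the first rows of $H$, $D^I$, $A^I$ are already zero), plus the dimension count---correctly fills in the details the paper leaves implicit.
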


Consider the truncated expansion of the right-hand side $f$ in the dual multiple-interval sum space, $f(x) \approx \Skd  \vectt{f}_{\vect{n}}^*$.  As in the single interval case, the matrix  $\tens{L}^{\vect{I}}_{\vect{n}} = \lambda \tens{E}_{\vect{n}}  + \mu \tens{H}_{\vect{n}}  + \eta \tens{D}^{\vect{I}}_{\vect{n}}  + \tens{A}^{\vect{I}}_{\vect{n}}$  is rectangular and the linear system to find the coefficients, $\vectt{u}_{ \vect{n} }$, of $u(x)$ in the multiple-interval sum space,
\begin{align}
 \tens{L}^{\vect{I}}_{\vect{n}}  \vectt{u}_{ \vect{n} } =  \vectt{f}^*_{ \vect{n} },  \;\;  \tens{L}^{\vect{I}}_{\vect{n}}  \in \mathbb{R}^{(1+2N+6K)\times(1+2N+2K)}
\end{align}
is overdetermined. This issue is overcome by appending four extra columns to  $\tens{L}^{\vect{I}}_{\vect{n}}$  per interval.  $\tens{L}^{\vect{I}}_{\vect{n}}$  has the block diagonal structure: 
\begin{align}
\tens{L}^{\vect{I}}_{\vect{n}} =
\begin{pmatrix}
\tens{L}^{I_1}_{n_1} &       &            &\\
   & \tens{L}^{I_2,\dagger}_{n_2} &            &\\
   &       & \ddots  &\\
   &       &            & \tens{L}^{I_K,\dagger}_{n_K}\\
\end{pmatrix},
\end{align} 
where each block has four more rows than columns. Hence, we define the matrix  $\tens{L}^{\vect{I}, +}_{\vect{n}}$  as 
\begin{align}
\tens{L}^{\vect{I}, +}_{\vect{n}} \coloneqq
\begin{pmatrix}
\tens{L}^{I_1, +}_{n_1}  &       &            &\\
   & \tens{L}^{I_2, \dagger, +}_{n_2}  &            &\\
   &       & \ddots  &\\
   &       &            & \tens{L}^{I_K, \dagger, +}_{n_K}  \\
\end{pmatrix},
\label{eq:Lmatrix1}
\end{align} 
where, for $k=2,\dots,K$,
\begin{align}
 \tens{L}^{I_1, +}_{n_1}   =
\scriptsize
\setcounter{MaxMatrixCols}{50}
\begin{pmatrix}
-\lambda & 0 & 0 & 0 & 0 & \rvline& \\
0 & 1 &0 &0 &0 & \rvline& \\
0 & 0 & 1 & 0 & 0 & \rvline& \\
0 & 0 & 0 &1 & 0 & \rvline & \\
\lambda & 0 &0 &0 &1& \rvline&  \text{\huge$ \tens{L}^{I_1,\circ}_{n_1}$}\\
0 & 0 & 0 & 0 & 0& \rvline& \\
\vdots & \vdots&\vdots&\vdots&\vdots& \rvline& \\
0 & 0 & 0 & 0 & 0& \rvline& \\
\end{pmatrix}, \;\;
 \tens{L}^{I_k,\dagger,+}_{n_k}   =
\scriptsize
\setcounter{MaxMatrixCols}{50}
\begin{pmatrix}
1 & 0 & 0 & 0 & \rvline& \\
0 &1 &0 &0 & \rvline& \\
0 &0 &1 &0 & \rvline& \\
0 & 0 &0 & 1 & \rvline &\\
0 &0 &0 &0& \rvline&  \text{\huge$ \tens{L}^{I_k,\dagger}_{n_k}$}  \\
\vdots&\vdots&\vdots&\vdots& \rvline& \\
0 & 0 & 0 & 0& \rvline& \\
\end{pmatrix},
\label{eq:Lmatrix2}
\end{align}
where the nonzero entries are placed in the rows corresponding to $V^{I_k}_0, \eU^{I_k}_{-1}, V^{I_k}_1$, and $\eU^{I_k}_{0}$, respectively. We reiterate that symbol $^\circ$ denotes the matrix without its first column and $^\dagger$ denotes the matrix without its first column and row. The matrix  $\tens{L}^{\vect{I}, +}  \in \mathbb{R}^{(1+2N+6K)\times(1+2N+6K)}$  is square and we find the coefficients $\vectt{u}^+_{ \vect{n} }$, in the appended multiple-interval sum space, of the solution $u(x)$ to \cref{eq:fpde} by solving the following system:
\begin{align}
 \tens{L}^{\vect{I}, +}_{\vect{n}}  \vectt{u}^+_{ \vect{n} } =  \vectt{f}^*_{\vect{n}}.  
\label{eq:multipe-interval-ls}
\end{align}
Since $ \tens{L}^{\vect{I}, +}_{\vect{n}} $ is block diagonal, the solve can be reduced to solving $K$ linear systems for each (much smaller) block, $k=2,\dots,K$,
\begin{align}
 \tens{L}^{I_1, +}_{n_1}  \vectt{u}^{I_1, +}_{ n_1 } =  \vectt{f}^{I_1, *}_{n_1},  \;\;  \tens{L}^{I_k, +}_{n_k}  \vectt{u}^{I_k, +}_{ n_k } =  \vectt{f}^{I_k, *}_{n_k}. \label{eq:block-decompose}
\end{align}
Moreover, the linear systems are independent and can be solved in parallel.

\section{Implementation notes}
\label{sec:implementation}

\subsection{Expansion}
\label{sec:expansion}

As the family of approximating functions is not orthogonal, the expansion of a known function, $f(x)$, in the sum space or its dual is nontrivial. In particular, the expansion of a general function need not be unique. For our purposes, we desire an expansion that approximates our known function to the required tolerance and the coefficients of the expansion are (relatively) small in magnitude.

Aside from the known function $f$ being an obvious composition of the functions in $\Sk(x)$, there are two cases to consider:
\begin{enumerate}
\itemsep=0pt
\item $f$ is compactly supported on the real line and the intervals $I_k = [a_k,b_k]$, $k =1,\dots,K$, are chosen such that either $f(x) \to 0$ or $|f(x)| \to \infty$ as $x \to a_k$ and $x \to b_k$;
\item Any function that does not fit under the first case. 
\end{enumerate}
In the first case, if $f(x) \to 0$ at the endpoints of the interval $I_k$ for some $k\in \{1,\dots,K\}$, then it might be well approximated in that interval by an expansion in $W^{I_k}_n(x)$. Similarly, if $f$ blows up at the interval endpoints it might be well represented by an expansion in $V^{I_k}_n(x)$. The advantage of only expanding $f(x)$ in $W^{I_k}_n(x)$ and $V^{I_k}_n(x)$ is that the coefficients of the expansion can be quickly computed to any given tolerance with an adaptive algorithm based on the discrete cosine transform that takes $\mathcal{O}(K n \log n)$ for $n$ coefficients and $K$ intervals \cite{Aurentz2017, Driscoll2014}. If we partially expand $f(x)$ in $V^{I_k}_n(x)$ for some $k$, but we require the expansion in the dual sum space, we utilize the identity mappings as defined in Propositions \labelcref{prop:quasimatrix-operators} and \labelcref{prop:element-mappings}. 

For functions that are not contained in the first case, we turn to the techniques used in \emph{frame} theory \cite{Adcock2019}. Essentially, we find the coefficients of the expansion that optimally interpolate the values of $f(x)$ (or a linear operator applied to $f$) at a set of collocation points in a least squares sense. Consider the collocation points $\vectt{x} = (x_1,\dots,x_M)$. We note that the set of collocation points should include points outside the intervals in order to ensure the tails also match. The least-squares matrix for the dual sum space is given by:
\begin{align}
\vectt{G}_{ij} = [l_i[\Skd]]_j, \;\; i=1\,\dots,M, \; j=1,\dots,1+2N+6K), 
\end{align}
where $\{l_i\}$ are a set of linear operators. Common choices include the identity, e.g.~ $l_i[\Skd] = \Skd(x_i)$ or linear operators designed to emulate Riemann sums $l_i[\Skd] = (x_{i+1}-x_{i-1})\Skd(x_i)$. Similarly we compute $\vectt{b}_i = l_i[f]$ and we solve the following least-squares problem for the expansion coefficients  $\vectt{f}^*_{\vect{n}}$: 
\begin{align}
\min_{\vectt{f}} \|\vectt{G} \vectt{f}^*_{\vect{n}} - \vectt{b}\|_2,
\label{eq:LS}
\end{align}
so that $f(x) \approx \Skd(x)  \vectt{f}^*_{\vect{n}}$.  The same technique can be used to find expansions in $\Sk(x)$.  It is well known that the least-squares matrix  in \cref{eq:LS}  is ill-conditioned for increasing $M$, $K$ and $n$. However, we still recover suitable least squares solutions if we sufficiently oversample the collocation points and an  $\epsilon$-truncated  SVD solver is used,  cf.~\cite[Sec.~5]{Papadopoulos2023}.  We refer to the work of Adcock and Huybrechs for further details \cite{Adcock2019,Adcock2020}.

\subsection{Approximate identity map from ${ \bf S }_n^+(x)$ to ${ \bf S }_n^*(x)$}
\label{sec:Id:Sp:Sd}

In our method, we find the approximate solution expanded in the appended sum space. A requirement in some problems is to reuse the computed solution as part of the right-hand side of the next solve. Hence, after each solve, we must map the appended sum space expansion of our current iterate to the expansion in the dual sum space. Propositions \labelcref{prop:quasimatrix-operators} and \labelcref{prop:element-mappings} provide identity mappings for the sum space. Hence, it remains to map the coefficients of the functions found in the appended sum space that are not in the sum space.

We outline the approach when utilizing a single interval at $[-1,1]$ in the solver and briefly mention the extension to multiple intervals. Given $\vectt{u}^+_{ n }$, the goal is to find the coefficient vector, $\vectt{u}^*_{ n}$, such that
\begin{align}
u(x) \approx { \bf S }_n^+(x)\vectt{u}^+_{ n } = { \bf S }_n^*(x)\vectt{u}^*_{ n }.
\end{align}
The first step is to expand the four functions $v_0(x),  \tilde{u}_{-1}(x), v_1(x)$ and $\tilde{u}_{0}(x)$ in the sum space and collect the coefficients in the vectors $\vectt{v}_{0}, \tilde{\vectt{u}}_{-1}, \vectt{v}_{1}$, and $\tilde{\vectt{u}}_{0}$, respectively. Next we form the identity mapping from ${{\bf S}}^+_n(x)$ to ${ \bf S }_n(x)$, denoted $ \tens{R}_{n} $, as follows:
\begin{align}
 \tens{R}_{n}   \coloneqq
\begin{pmatrix}
1 & \vline & \vline & \vline & \vline& 0 & \cdots  & 0\\
0 & \vectt{v}_{0}& \tilde{\vectt{u}}_{-1}& \vectt{v}_{1}& \tilde{\vectt{u}}_{0} & 1 & & \\
\vdots & \vline & \vline&\vline &\vline & & \ddots&\\
0 & \vline & \vline&\vline &\vline & & & 1
\end{pmatrix}  \in \mathbb{R}^{(2n+3)\times(2n+7)}.
\end{align}
Essentially $ \tens{R}_{n}  $ is constructed by assembling the $(2n+3)\times (2n+3)$ identity matrix and adding the four coefficient vectors, of the additional functions, in $ \tens{R}_{n}  $ between the first and second columns. The approximate identity operator $ \tens{B}_{n}  \in \mathbb{R}^{(2n+7) \times (2n+7)}$ such that ${ \bf S}_n^+(x)  \approx {{\bf S}}^*_n(x)  \tens{B}_{n}  $ is defined as
\begin{align}
 \tens{B}_{n}   \coloneqq  \tens{E}_{n}   \tens{R}_{n} \in \mathbb{R}^{(2n+7) \times (2n+7)}  ,
\end{align} 
where $ \tens{E}_{n}  $ is the identity operator from ${ \bf S}_n(x)$ to ${{\bf S}}^*_n(x)$ as defined in \cref{prop:quasimatrix-operators}. 

For multiple intervals, we denote the identity mapping as $ \tens{R}^{\vect{I}}_{\vect{n}}  \in \mathbb{R}^{(1+2N+2K)\times(1+2N+6K)}$, $ \tens{R}^{\vect{I}}_{\vect{n}} : \Skp(x) \to \Sk(x)$, where $K$ is the number of intervals and consists of the identity matrix with an additional $4K$ dense columns in the positions of the additional functions in the appended sum space. As before, we define $ \tens{B}^{\vect{I}}_{\vect{n}} \coloneqq  \tens{E}_{\vect{n}} \tens{R}^{\vect{I}}_{\vect{n}}$  so that $\Skp(x) \approx \Skd(x)  \tens{B}^{\vect{I}}_{\vect{n}}$.

\subsection{Numerical conditioning}
\label{sec:conditioning}

Two potential sources of numerical ill-conditioning in \cref{alg:spectral-method} are:
\begin{enumerate}
\itemsep=0pt
\item The expansion of the right-hand side $f$ to find a coefficient vector ${\bf f}^{\vect{I}, *}_{\vect{n}}$;
\item Inverting the matrix $\tens{L}_{\vect{n}}^{\vect{I}, +}$ to find ${\bf u}^{\vect{I}, +}_{\vect{n}}$.
\end{enumerate}
In \cref{cor:dual-frame}, we prove that the dual sum space is a frame. Leveraging recent analysis on frames (which are overdetermined approximation spaces) it is known that an $\epsilon$-truncated SVD factorization alleviates the ill-conditioning that one might expect in solving such a least-squares problem. Hence, the expansion of the right-hand side is an understood and well-conditioned problem, resolving (1). 

For (2), we note that the 2-norm condition number $\kappa(\tens{L}_{\vect{n}}^{\vect{I}, +})$ only grows linearly with $n_{\text{max}} = \max(n_1, \dots, n_K)$ and is independent on the number of intervals in the approximation space $K$ due the block diagonal structure of $\tens{L}_{\vect{n}}^{\vect{I}, +}$. Moreover, there exists a \emph{diagonal} right preconditioner $\tens{P}_{\vect{n}}^{\vect{I}}$ such that $\kappa(\tens{L}_{\vect{n}}^{\vect{I}, +} (\tens{P}_{\vect{n}}^{\vect{I}})^{-1}) = C$ where $C$ is independent of $\vect{n}$. This is typical of a coefficient-based spectral method, cf.~\cite[Sec.~4.1]{Olver2013}.

\begin{proposition}
\label{prop:condition-number} 
Consider the interval $I=[-1,1]$, $\lambda = \mu = \eta = 1$, and choose the appended sum space functions $\tilde{u}_{-1}$, $\tilde{u}_{n+1}$, $v_0$, and $v_{n+2}$ as defined in \cref{sec:single-interval}. Then
\begin{align}
\kappa(\tens{L}^+_{n}) = \mathcal{O}(n),
\end{align}
where $\kappa(\cdot)$ denotes the 2-norm condition number of a matrix. Moreover, consider the diagonal matrix
\begin{align}
\tens{P}_{n} = \begin{pmatrix}
\tens{I}_{5} &\\
& \tens{D}_n
\end{pmatrix},\;\; \text{where} \;\;
\tens{D}_n =
\begin{pmatrix}
1 &&&&&&\\
& 1 &&&&&\\
&& 2 &&&&\\
&&&2 &&&\\
&&&&\ddots &&\\
&&&&&n+1 &\\
&&&&&&n+1
\end{pmatrix} \in \mathbb{R}^{(2n+2) \times (2n+2)},
\end{align}
and $\tens{I}_5$ is the $5 \times 5$ identity matrix. Then, there exists a constant $C>0$, such that $C$ is independent of $n$ and
\begin{align}
\kappa(\tens{L}^+_{n} (\tens{P}_{n})^{-1}) = C.
\label{eq:condition1}
\end{align}
\end{proposition}
\begin{proof}
The result follows by showing that $\tens{L}^+_{n} (\tens{P}_{n})^{-1} = \tens{I} + \tens{K}_n$ where $\tens{K}_n$ is a compact operator and a very similar result is proven in \cite[Lem.~4.3]{Olver2013}.
\end{proof}
\begin{remark}
Numerically we found that $C \approx 3.1$ where $C$ is the constant in \cref{eq:condition1}.
\end{remark}

A diagonal preconditioner may be constructed for the multiple-interval case as follows:
\begin{align}
\tens{P}_{\vect{n}} = 
\begin{pmatrix}
\tens{P}_{n_1} & & &\\
& \tens{P}^\dagger_{n_2} &&\\
&&  \ddots & \\
& & &\tens{P}^{\dagger}_{n_K}
\end{pmatrix},
\end{align}
where $\tens{P}^{\dagger}_n$ denotes $\tens{P}_n$ with the first row and column removed. It is a direct corollary of \cref{prop:condition-number} that there exists a $C>0$, independent of $\vect{n}$, such that $\kappa( \tens{L}_{\vect{n}}^{\vect{I}, +}  (\tens{P}_{\vect{n}})^{-1}) = C$.

\section{Numerical analysis}
\label{sec:analysis}
The goal of this section is to prove that the dual sum space is a frame for the right-hand side $f$ and, consequently, derive a convergence result for the discretization. These results motivate the favourable approximation properties that we observe in our numerical experiments in the next section.

\begin{definition}[Hilbert space $H^s_w(\mathbb{R})$]
\label{def:Hsw}
Let $H^s_w(\mathbb{R})$ denote the Hilbert space 
\begin{align}
H^s_w(\mathbb{R}) \coloneqq
\{ u \in L^2_w(\mathbb{R}) : \supp(u) \subseteq \supp(w), \; (-\Delta)^{s/2} u \in L^2(\mathbb{R})\},
\end{align}
equipped with the inner-product $( u, v )_{H^s_w(\mathbb{R})} \coloneqq ( u, v )_{L^2_w(\mathbb{R})} +  ((-\Delta)^{s/2} u,  (-\Delta)^{s/2} v )_{L^2(\mathbb{R})}$. Here $\supp$ denotes the support of a function.
\end{definition}

\begin{lemma}[Orthogonality, Lem.~5.1 in \cite{Papadopoulos2023}]
Let $w = (1-x^2)^{-1/2}_+$. Then the family of extended Chebyshev functions $\{\eT_n\}_{n \in \mathbb{N}}$ is orthogonal with respect to the $H^{1/2}_w(\mathbb{R})$ inner product. This also holds for  $\{W_n\}_{n \in \mathbb{N}_0}$.
\end{lemma}

\begin{definition}
Let $w = (1-x^2)^{-1/2}_+$. Define $\{\hT_n\}_{n \in \mathbb{N}}$ and $\{\hW_n\}_{n \in \mathbb{N}_0}$ as the orthonormalized families of functions $\{\eT_n\}_{n \in \mathbb{N}}$ and $\{W_n\}_{n \in \mathbb{N}_0}$, respectively, with respect to the $H^{1/2}_w(\mathbb{R})$-inner product. Moreover, let $\hat{V}_n(x) \coloneqq (-\Delta)^{1/2} \hT_n(x)$, $n \in \mathbb{N}$ and  $\hat{U}_n(x) \coloneqq (-\Delta)^{1/2} \hW_n(x)$, $n \in \mathbb{N}_0$.
\end{definition}

Let $\hat{S}$ and $\hat{S}^{*}$ denote the orthonormalized sum and dual sum spaces:
\begin{align}
\hat{S} &\coloneqq \mathrm{span}(\{\hT_n, \hW_m : n \in \mathbb{N}, m \in \mathbb{N}_0\}),\\
\hat{S}^{*} & \coloneqq \mathrm{span}(\{\eU_{-2}, \eU_{-1}, V_0, \hat{U}_n, \hat{V}_m: n \in \mathbb{N}_0, m \in \mathbb{N}\}).
\end{align}

\begin{theorem}[Frame on $H^{1/2}_w(\mathbb{R}) \cap C(\mathbb{R})$]
\label{prop:mul-frame}
Let $w(x) = (1-x^2)^{-1/2}_+$. Then the $H^{1/2}_w(\mathbb{R})$-orthonormalized sum space $\hat{S}$ is a frame on $H^{1/2}_w(\mathbb{R}) \cap C(\mathbb{R})$.
\end{theorem}
\begin{proof}
The result follows, line-by-line, from the proof of \cite[Th.~5.2]{Papadopoulos2023}.
\end{proof}

\begin{corollary}
\label{cor:dual-frame}
Let  $w(x) = (1-x^2)^{-1/2}_+$. Then the dual space $\hat{S}^{*}$ is a frame on $(H^{1/2}_w(\mathbb{R}) \cap C(\mathbb{R}))^*$.
\end{corollary}

\begin{proof}
From \cref{prop:mul-frame} and an application of Theorem 5.1 in \cite{Papadopoulos2023}, we may deduce that $\{\hat{V}_n\}_{n=1}^\infty \cup \{\hat{U}_n\}_{n=0}^\infty$ is a frame on $(H^{1/2}_w(\mathbb{R}) \cap C(\mathbb{R}))^*$. 

It remains to prove the frame bound condition for the remaining dual sum space functions $\eU_{-2}(x)$ and $\eU_{-1}(x)$, and $V_0(x)$. The lower bound follows trivially. Moreover, it may be shown that $\eU_{-2}, \eU_{-1}, V_0 \in (H^{1/2}_w(\mathbb{R}))^*$. Hence, the upper bound also follows.
\end{proof}

\begin{theorem}
\label{th:convergence}
Consider the operator $\mathcal{L}_{\mu,\lambda,\eta}$ where $\mu, \lambda >0$ and $\eta = 0$. Let the weight $w(x) = (1-x^2)^{-1/2}_+$. Suppose that $f \in H^*  = (H^{1/2}_w(\mathbb{R}) \cap C(\mathbb{R}))^*$. Consider the degree $n$. Suppose that an $\epsilon$-truncated SVD factorization is utilized to discover a coefficient vector ${\bf f}^{*}_{n}$ such that $f(x) \approx \hat{{\bf S}}^*_n(x) {\bf f}^{*}_{n}$ and fix $\hat{\tens{L}}^{+}_{n} {\bf u}^{+}_{n} = {\bf f}^{*}_{n}$. Here $\hat{\tens{L}}^{+}_{n}$ is the scaled matrix ${\tens{L}}^{+}_{n}$ with respect to the orthonormalized sum space. Then, there exist constants $C, \kappa^\epsilon_{M,n}, \theta^{\epsilon}_{M,n}>0$ such that
\begin{align}
\begin{split}
&\|u - \hat{{\bf S}}^+_n {\bf u}^{+}_{n} \|_{H^{1/2}(\mathbb{R})} \\
& \leq C \inf_{{\bf v} \in \mathbb{R}^{n+7}} 
\left\{
\| f -  \hat{{\bf S}}^*_n {\bf v} \|_{H^*} + \kappa_{M, \vect{n}}^\epsilon \| f - \hat{{\bf S}}^*_n {\bf v}  \|_M + \epsilon \theta^\epsilon_{M,\vect{n}} \| {\bf v} \|_{\ell^2}
 \right\},
\end{split}
\end{align}
where $\| u \|_M \coloneqq \sum_{i=1}^M |l_i[u]|^2$ and $l_i$, $i=1,\dots,M$ are defined in \cref{sec:expansion}.
\end{theorem}
\begin{proof}
Note that
\begin{align}
\begin{split}
&\|u - \hat{{\bf S}}^+_n {\bf u}^{+}_{n} \|_{H^{1/2}(\mathbb{R})} 
= \|\mathcal{L}_{\mu, \lambda, 0}^{-1} \mathcal{L}_{\mu, \lambda, 0} (u - \hat{{\bf S}}^+_n {\bf u}^{+}_{n}) \|_{H^{1/2}(\mathbb{R})}\\
&\indent= \|\mathcal{L}_{\mu, \lambda, 0}^{-1} (f- \hat{{\bf S}}^*_n \hat{\tens{L}}^{+}_{n} {\bf u}^{+}_{n}) \|_{H^{1/2}(\mathbb{R})}\\
& \indent= \|\mathcal{L}_{\mu, \lambda, 0}^{-1} (f- \hat{{\bf S}}^*_n  {\bf f}^{*}_{n} ) \|_{H^{1/2}(\mathbb{R})}\\
&\indent \leq \|\mathcal{L}_{\mu, \lambda, 0}^{-1} \|_{\mathcal{B}(H^*, H^{1/2}(\mathbb{R}))} \|f- \hat{{\bf S}}^*_n {\bf f}^{*}_{n} \|_{H^*}.
\end{split}
\end{align}
Since $H^* \subset H^{-1/2}(\mathbb{R})$, we have that there exists a $C>0$ such that $\|\mathcal{L}_{\mu, \lambda, 0}^{-1} \|_{\mathcal{B}(H^*, H^{1/2}(\mathbb{R}))} \leq C$. \cref{cor:dual-frame}  and an application of Theorem 3.7 in \cite{Adcock2020} achieves the result.
\end{proof}

\begin{remark}
The constants $\theta_{M, n}^\epsilon$ and $\kappa_{M, n}^\epsilon$ are problem and frame dependent. Ideally their magnitude is $\mathcal{O}(1)$, in which case we are guaranteed to reach an accuracy of $\epsilon$ if $M$ and $n$ are taken sufficiently large. For a thorough discussion of the constants and their behaviour, we refer the reader to \cite{Adcock2020}.
\end{remark}

A full convergence analysis for $\lambda, \mu, \eta \in \mathbb{R}$, $f \in H^{-1/2}(\mathbb{R})$, and sum spaces consisting of multiple intervals is nontrivial and beyond the scope of this work.

\section{Numerical examples}
\label{sec:examples}
In this section we provide several numerical examples. During numerical experiments, we found that approximating with a sum space centred at a single interval was normally insufficient for satisfying the condition $u(x) \to \infty$ as $|x| \to \infty$. However, this was resolved when utilizing a combination of sum spaces centred at multiple intervals. In particular, we found that five intervals worked well in all examples. Unless the initial condition or right-hand side are represented exactly by the (dual) sum space, we find the coefficient vectors via the truncated least-squares matrix as explained in \cref{sec:expansion}.

\textbf{Code availability:} For reproducibility, an implementation of the spectral method as well as scripts to generate the plots and solutions can be found at \url{https://github.com/ioannisPApapadopoulos/SumSpaces.jl}. The version of the software used in this paper is archived on Zenodo \cite{SumSpacesMathLink, SumSpaces, SumSpacesv2}.  Any timings provided were achieved on computer with a 2.90GHz Intel(R) Core(TM) i7-10700 CPU and 16GB of RAM. 

\subsection{Manufactured solutions}
In order to test the spectral convergence of our method, our first example is constructed via the method of manufactured solutions. We fix the exact solution $u(x) = \me^{-x^2}$. Note that $\fdx \me^{-x^2} = -2x\me^{-x^2}$. Moreover \cite[Prop.~4.2]{Sheng2020},
\begin{align}
(-\Delta)^{1/2} u(x) = \frac{2}{\sqrt{\pi}} \oFo(1;1/2;-x^2), \label{eq:ex2-1}
\end{align}
where $\oFo$ denotes the Kummer confluent hypergeometric function \cite[Sec.~16.2]{OlverNIST}.  Note that, by appealing to the Maclaurin series expansions of $_{1}F_1$ and $\mathrm{erf}$, it can be shown that  $\frac{2}{\sqrt{\pi}} {_{1}F_1}(1;1/2;-x^2) = \frac{2}{\sqrt{\pi}} + 2 \I \E^{-x^2} |x| \mathrm{erf}(\I|x|)$ where $\mathrm{erf}(z) \coloneqq \frac{2}{\sqrt{\pi}} \int_0^z \me^{-t^2} \mathrm{d}t$. By utilizing the identity
\begin{align}
\fdx [- \I \, \mathrm{erf}(\I|x|) ]= \frac{2}{\sqrt{\pi} |x|} x \E^{x^2},
\end{align}
we deduce that
\begin{align}
\fdx \left[-\frac{\I}{x} \left(\E^{-x^2} |x| \, \mathrm{erf}(\I |x|) \right) \right] = 2 + 2 \I \E^{-x^2} |x| \mathrm{erf}(\I|x|).
\end{align}
Hence
\begin{align}
\Hil[u](x) = -\frac{\mi}{x} \left(\me^{-x^2} |x| \, \mathrm{erf}(\mi |x|) \right).
\end{align} 
In the first case we set $\lambda = 1$, $\mu = \eta = 0$:
\begin{align}
(\mathcal{I} + (-\Delta)^{1/2})u(x) = \me^{-x^2} +  \frac{2}{\sqrt{\pi}} \oFo(1;1/2;-x^2).  \label{eq:ex2:case1}
\end{align}
In the second case we have $\lambda = \mu = \eta = 1$:
\begin{align}
\begin{split}
&(\mathcal{I} + \fdx + \Hil + (-\Delta)^{1/2})u(x) \\
& \indent = (1-2x)\me^{-x^2} -\frac{\mi}{x} \left(\me^{-x^2} |x| \, \mathrm{erf}(\mi |x|) \right) 
+ \frac{2}{\sqrt{\pi}}\oFo(1;1/2;-x^2).  \label{eq:ex2:case2}
\end{split}
\end{align}
For this example we fix five intervals at $[-5,-3]$, $[-3,-1]$, $[-1,1]$, $[1,3]$, and $[3,5]$. In the least-squares expansion we choose 6001 equally spaced points on each interval as well as 6001 equally spaced points between $[-25,-5]$ and $[5,25]$. This results in 42,001 unique collocation points. We choose the appended sum space functions $v_0(x)$, $\tilde{u}_{-1}(x)$, $v_{n+2}(x)$, $\tilde{u}_{n+1}(x)$ centred on each interval. As discussed in \cref{rem:stabiliseL}  and proven in \cref{prop:condition-number},  this improves the conditioning of the induced linear systems and helps to ensure that the convergence rate is not polluted by numerical instabilities due to ill-conditioning.  The additional functions are computed by approximating the continuous inverse Fourier transforms via Mathematica's \texttt{NIntegrate} routine \cite{mathematica}.  We evaluate the $_{1}F_1(1;1/2;-x^2)$ function via a parameter change $_{1}F_1(a,b,-x^2) = \mathrm{e}^{-x^2} {}_{1}F_1(b-a, b, x^2)$ and then an evaluation of its Maclaurin series \cite[Sec.~13.2.2]{OlverNIST} as implemented in \cite[v.~0.3.10]{HypergeometricFunctions.jl}.

In \cref{fig:ex2-2}, we provide spy plots of the induced subblocks of $\vectt{L}^+$.  We see that the matrices are sparse and almost banded. A semi-log plot of the convergence for both cases is depicted in \cref{fig:ex2-1}. The error is measured in the $l^\infty$-norm as measured on a 1001-point equally spaced grid at $[-5,5]$, i.e.
\begin{align}
\max_{x \in \{-5,-4.99,\cdots,4.99,5\}} | u(x) - \Sk(x) \vectt{u}|.
\label{eq:man-error}
\end{align}
 When $n=100$ on each interval and the right-hand side \cref{eq:ex2:case2}, the least-squares matrix $\tens{G} \in \mathbb{R}^{42,001 \times 1031}$ in line 1 of \cref{alg:spectral-method} was assembled in 8.16 seconds and the least-squares solve was computed in 8.66 seconds, via an $\epsilon$-truncated SVD factorization to find $\vect{f}^{\vect{I},*}_{\vect{n}} \in \mathbb{R}^{1031}$. The matrix $\tens{L}^{\vect{I}, +}_{\vect{n}}$ was assembled in 0.0944 seconds. The linear system solve to find $\vect{u}^{\vect{I}, +}_{\vect{n}}$ took 0.0128 seconds.  Note that $n=100$ on each interval is excessive since the convergence is spectral. the error stagnates around $10^{-14}$ in \cref{fig:ex2-1a} and $10^{-13}$ in \cref{fig:ex2-1b} by $n=21$ as the error of the approximation of the right-hand side stagnates at the same magnitude. 
\begin{figure}[h!]
\centering
\includegraphics[width =0.49 \textwidth]{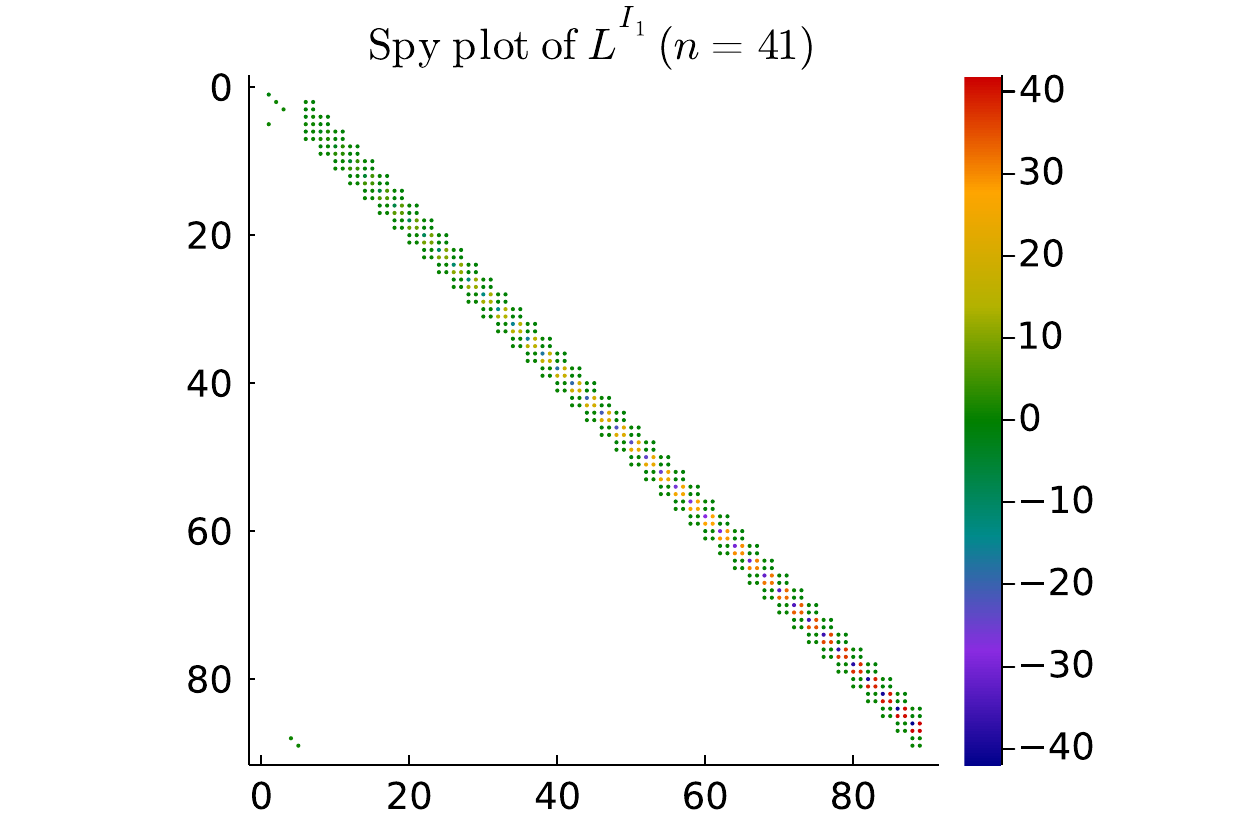}
\includegraphics[width =0.49 \textwidth]{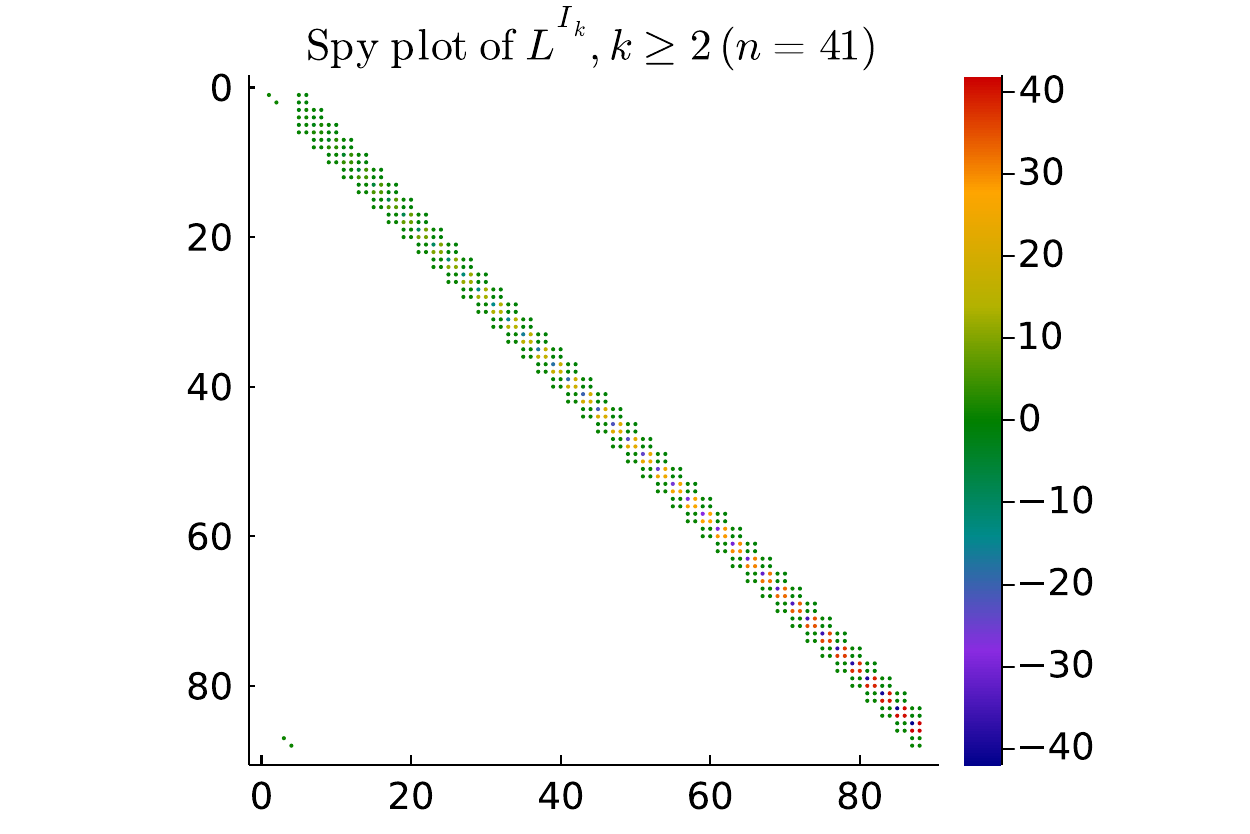}
\caption{\texttt{Spy} plots of the matrices in the linear systems after discretizing and decoupling \cref{eq:ex2:case2} interval-wise. Here $\lambda = \eta = \mu = 1$ and we use the additional functions $v_0$, $\tilde{u}_{-1}$,  $v_{n+1}$, $\tilde{u}_{n+1}$. The matrices are sparse and almost banded.}\label{fig:ex2-2}
\end{figure}
\begin{figure}[h!]
\centering
\subfloat[Error for \cref{eq:ex2:case1}]{\includegraphics[width =0.49 \textwidth]{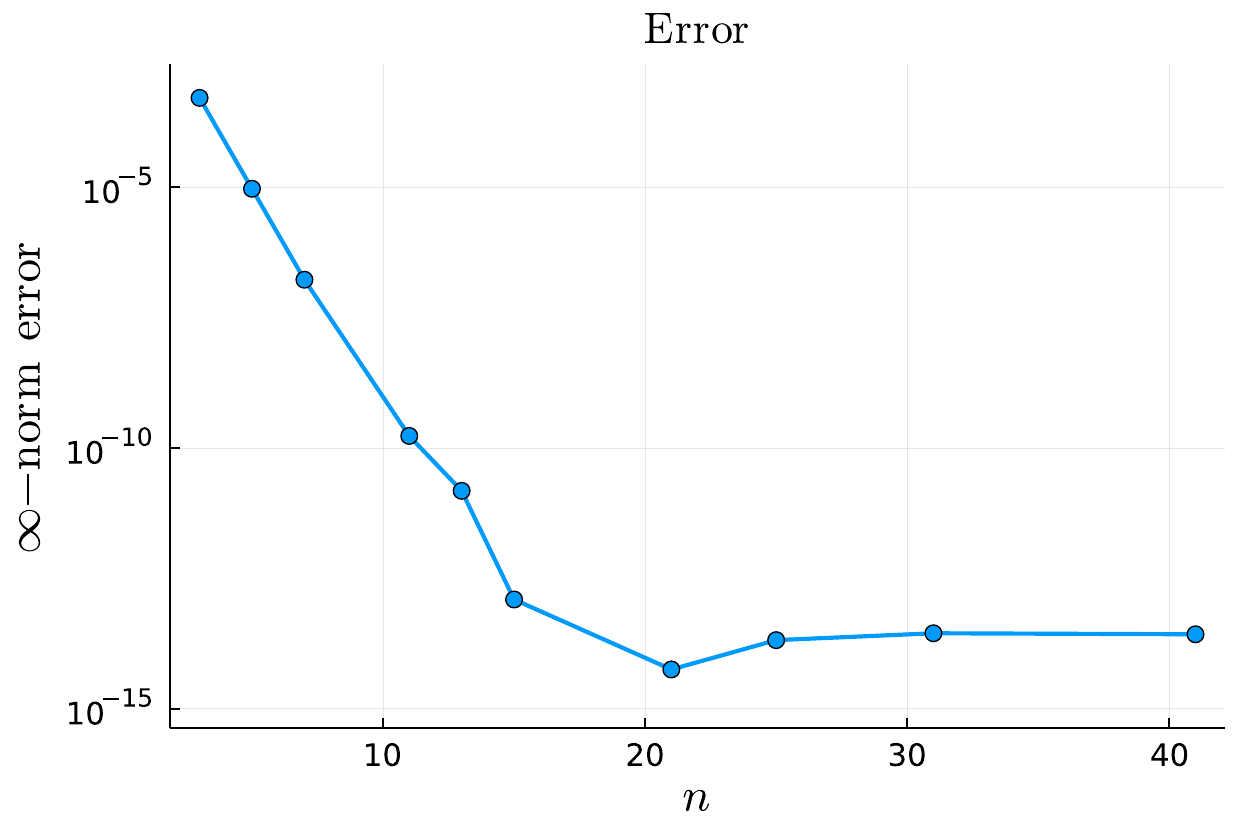} \label{fig:ex2-1a}}
\subfloat[Error for \cref{eq:ex2:case2}]{\includegraphics[width =0.49 \textwidth]{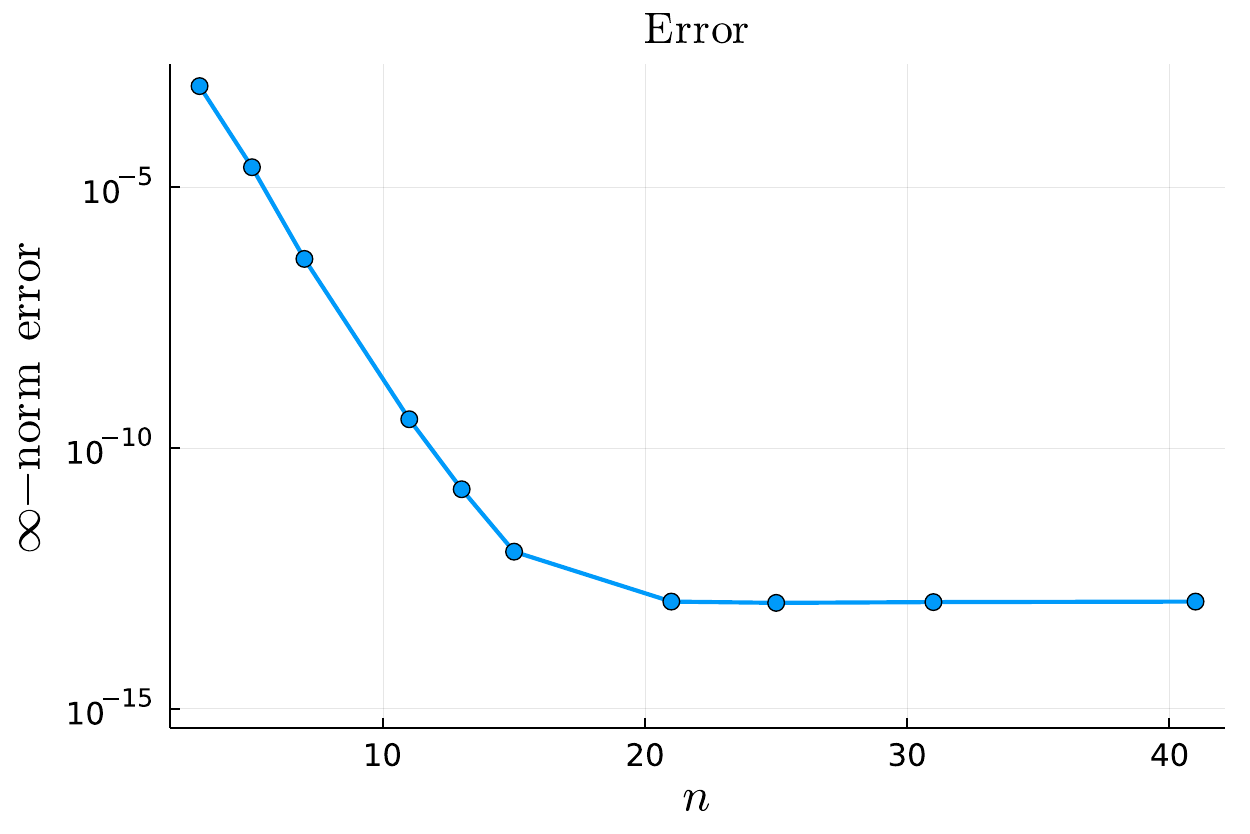} \label{fig:ex2-1b}}
\caption{Error in the numerical solution $u(x)$ as measured by \cref{eq:man-error} with the right-hand sides \cref{eq:ex2:case1} and \cref{eq:ex2:case2} for increasing truncation degree $n$.  The approximation space contains $10n +31$ functions for each value of $n$.  The convergence is spectral.}\label{fig:ex2-1}
\end{figure}

Since we are not guaranteed that the sum space is a frame on all $\mathbb{R}$ (\cref{prop:mul-frame} only guarantees it is a frame on the user-chosen intervals), in \cref{fig:manfactured-solution-coeffs} we investigate the behaviour of the expansion coefficients of the right-hand side. In particular we plot the $l^\infty$-norm of the coefficient vector $\vectt{f}$ of the right-hand sides \cref{eq:ex2:case1} and \cref{eq:ex2:case2} for increasing truncation degree $n$. Despite the lack of a strict frame condition, we achieve bounded coefficients of magnitude $\mathcal{O}(1)$ for all values of $n$.  

\begin{figure}[h!]
\centering
\subfloat[Coefficient norm for \cref{eq:ex2:case1}]{\includegraphics[width =0.49 \textwidth]{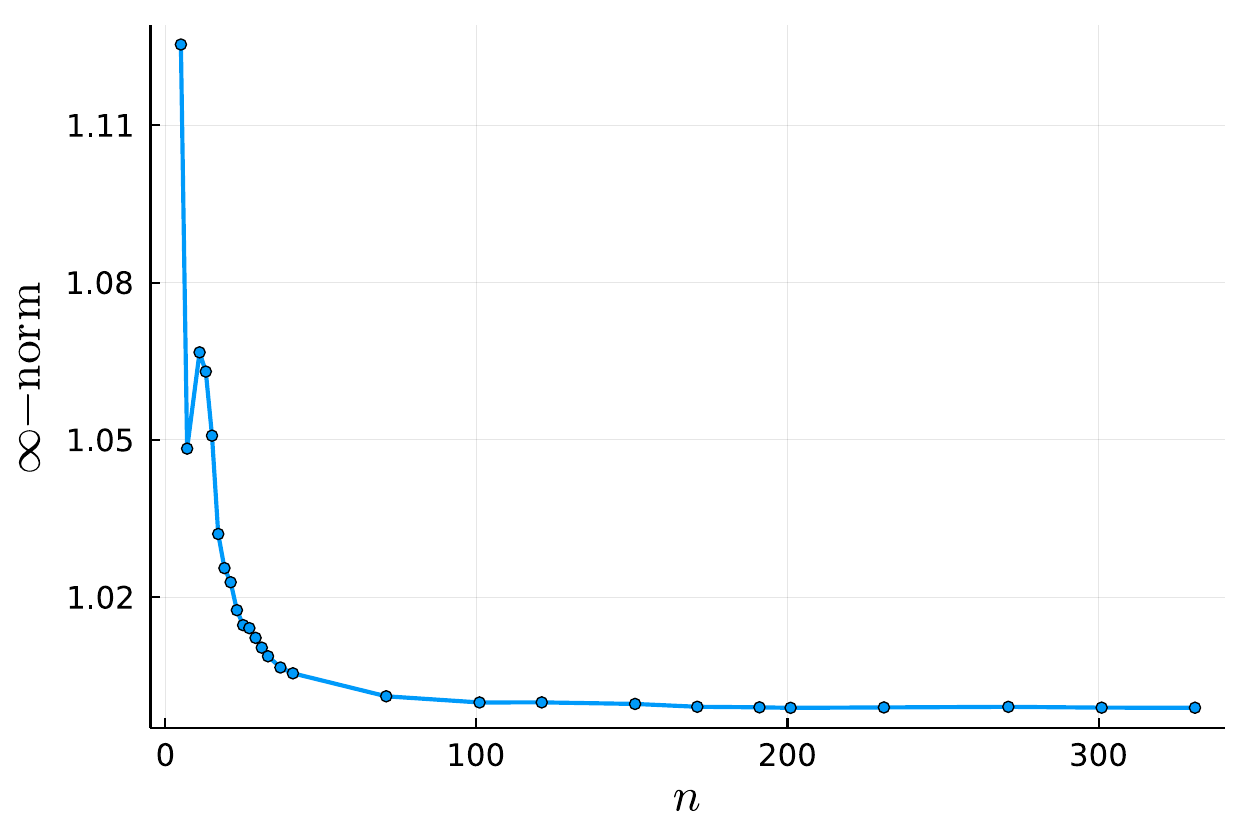}}
\subfloat[Coefficient norm for \cref{eq:ex2:case2}]{\includegraphics[width =0.49 \textwidth]{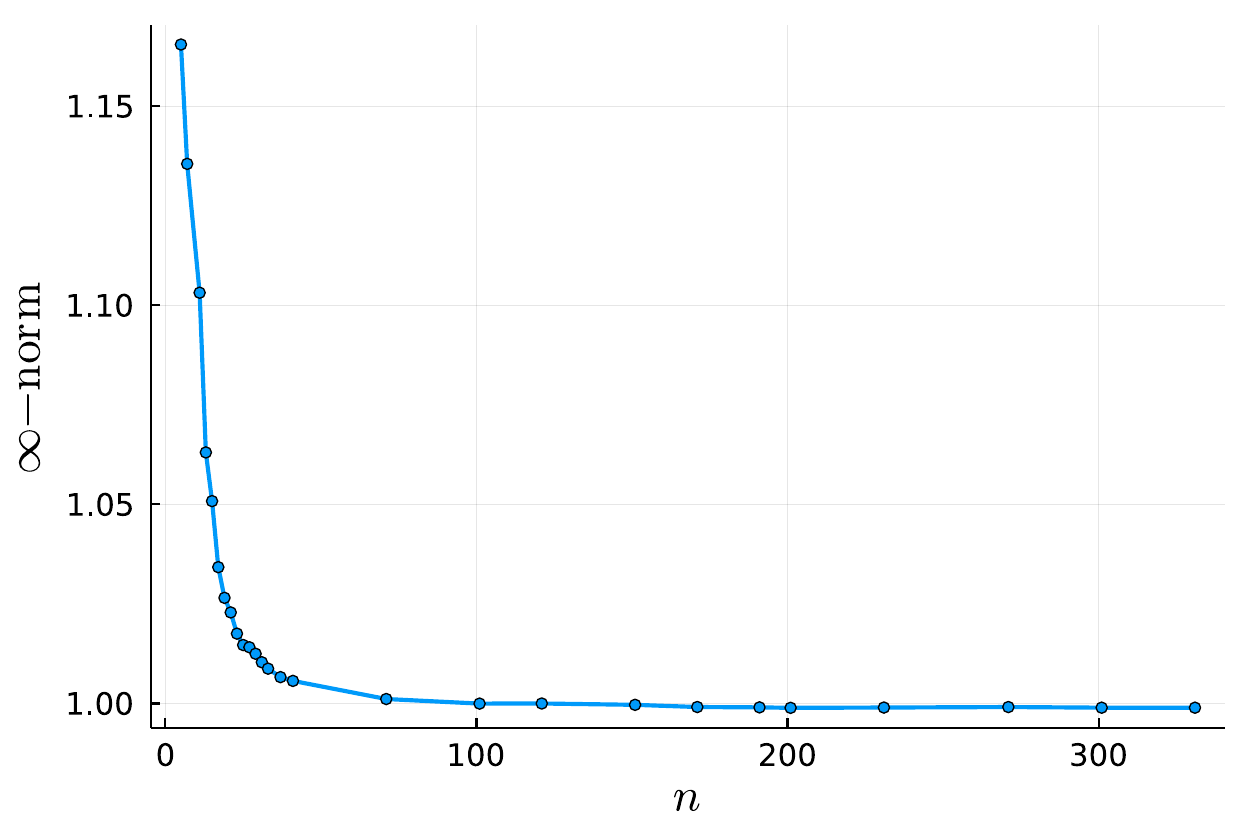}}
\caption{The $l^\infty$-norm of the coefficient vector of the sum space expansion of the right-hand side for increasing truncation degree $n$.  The approximation space contains $10n +31$ functions for each value of $n$. }\label{fig:manfactured-solution-coeffs}
\end{figure}

\subsection{Discontinuous right-hand side}

This example examines the behaviour of the (dual) sum space approximation of a problem with discontinuous data. We seek $u \in H^{1/2}(\mathbb{R})$ that satisfies
\begin{align}
(\mathcal{I} + (-\Delta)^{1/2})u(x) = f(x), \;\;\; \text{where} \;\;
f(x) \coloneqq 
\begin{cases}
1 & |x| <1,\\
0 & |x| \geq 1. 
\end{cases} \label{eq:ex3-1}
\end{align}
We use a multiple-interval (dual) sum space centred at the intervals $[-5,-3]$, $[-3,-1]$, $[-1,1]$, $[1,3]$, and $[3,5]$. In the sum space expansion we choose 6001 equally spaced points on each interval as well as 6001 equally spaced points between $[-10,-5]$ and $[5,10]$. This results in 42,001 unique collocation points. As the multiple-interval dual sum space contains functions that are undefined at $x= -5,-3,-1,1,3$ and 5, we instead choose 6001 equally spaced points in $[a+\epsilon, b-\epsilon]$, $\epsilon = 10^{-2}$, where $a, b$ represent the endpoints of each interval as well as 6001 equally spaced points in $[-10+\epsilon,-5-\epsilon]$ and $[5+\epsilon,10-\epsilon]$. This results in 42,007 collocation points. 

We first check the convergence of the approximation of the discontinuous right-hand side $f$. We measure the error in the sum space approximation via \cref{eq:man-error}. We exclude the points $x= -5,-3,-1,1,3$ and 5, when measuring the error in the dual sum space expansion, i.e. we compute:
\begin{align}
\max_{x \in \{-5,-4.99,\cdots,4.99,5\} \backslash \{-5,-3,-1,1,3,5\}} | u(x) - \Skd(x) \vectt{u}|.
\label{eq:dual-man-error}
\end{align}
In \cref{fig:ex3-1}, we plot the sum space approximated right-hand side as well as the error plot with increasing truncation degree of both the sum space and dual sum space approximations.  At truncation degree $n=101$, the sum space error is of the order $\mathcal{O}(10^{-6})$. Whereas the dual sum space approximation reaches an error of $\mathcal{O}(10^{-13})$. Hence, the best approximation of this particular discontinuous right-hand side is achieved via a direct dual sum space expansion rather than a sum space expansion coupled with the identity conversion operator $\vectt{E}$ (as defined in \cref{eq:element-E-H}) to re-expand in the dual sum space. 

The numerical solution is plotted in \cref{fig:ex3-2} as well as an approximation of the convergence. The appended sum space functions are $v_0(x)$, $\tilde{u}_{-1}(x)$, $v_{n+2}(x)$, and $\tilde{u}_{n+1}(x)$. Mathematica's \texttt{NIntegrate} routine is used to compute the necessary inverse Fourier transforms. As we do not have an explicit solution, we cannot measure the error directly. Instead, we truncate the right-hand side expansion at degrees $n_f =11, 15, 21$, and 27. We then measure the 2-norm difference in the sum space coefficient vectors at truncation degree $n$ and $n-2$ of the solution for $n = n_f+2, n_f+4, \dots, n_f + 16$. We observe spectral convergence in the coefficient vectors. 
\begin{figure}[h!]
\centering
\includegraphics[width =0.49 \textwidth]{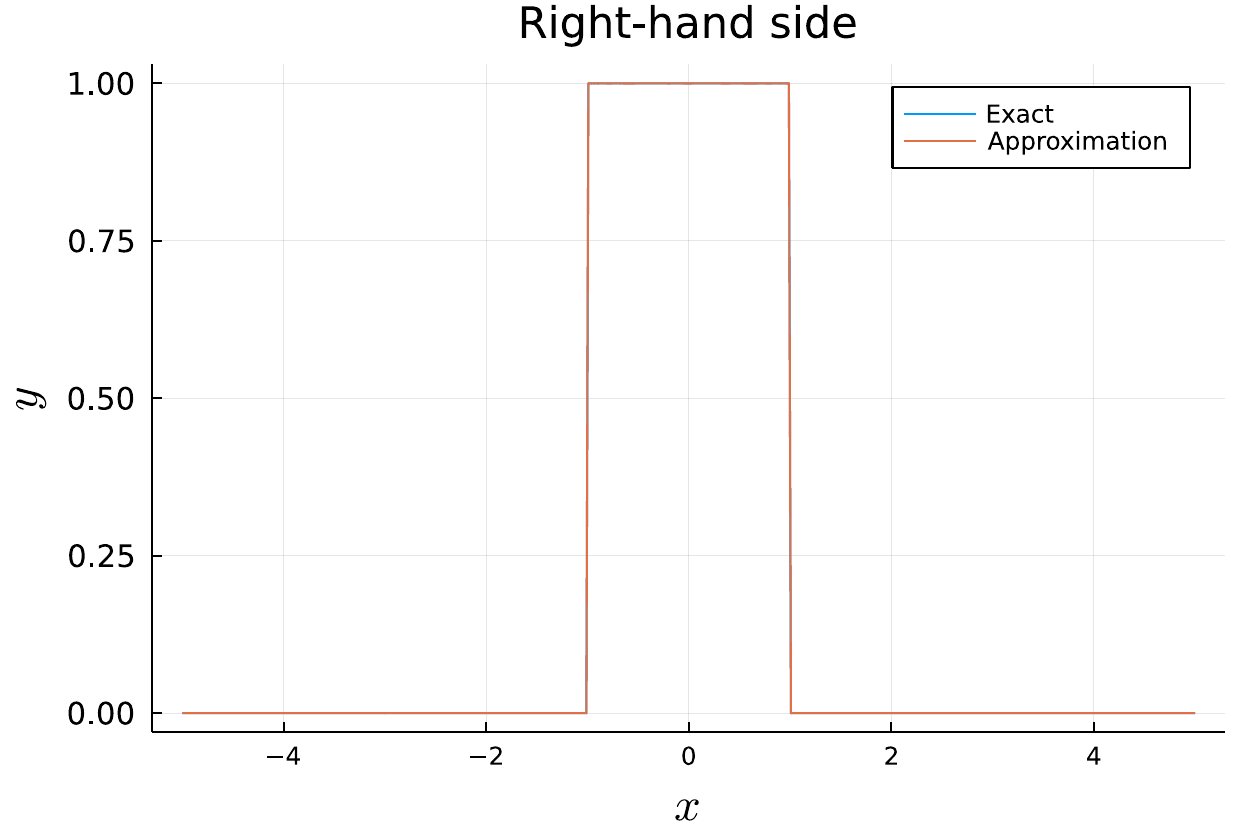}
\includegraphics[width =0.49 \textwidth]{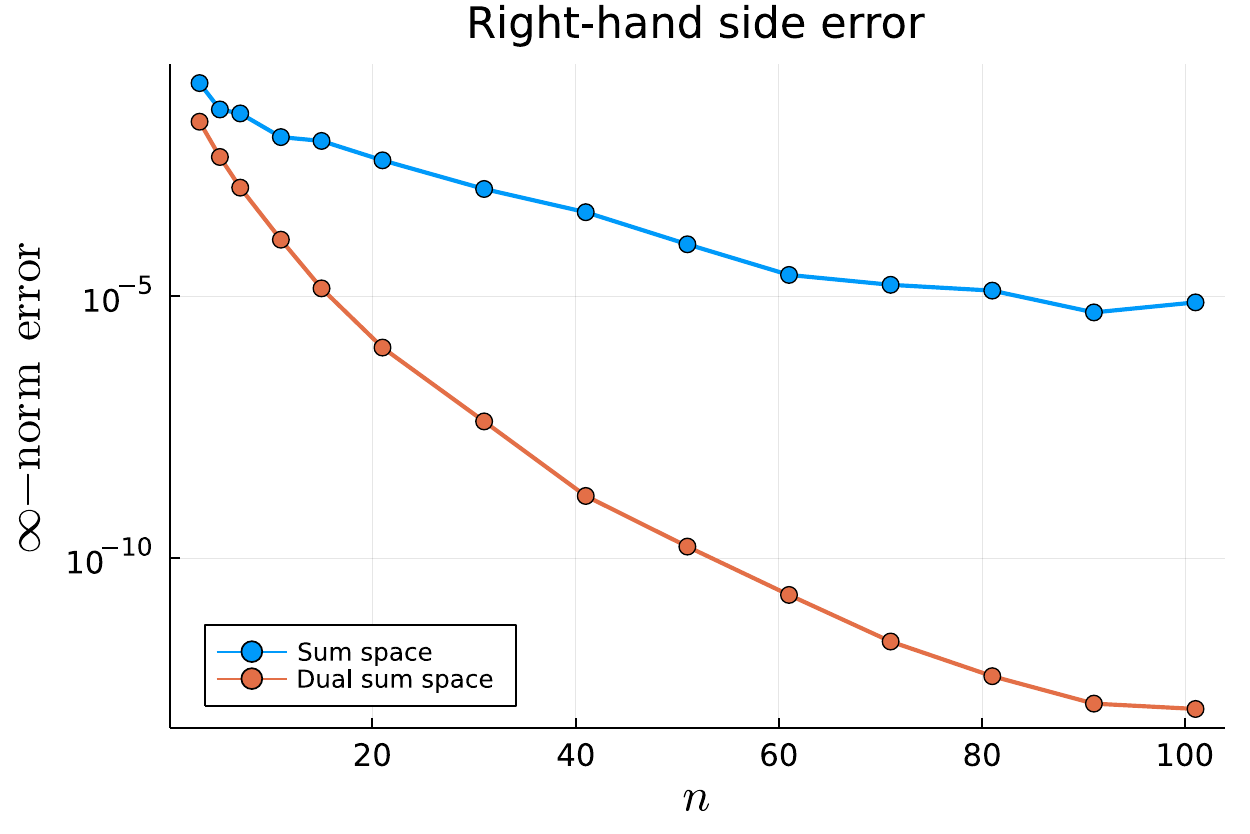}
\caption{The exact and sum space approximated $f(x)$ as defined in \cref{eq:ex3-1} $(n=41)$ (left) and the $l^\infty$-norm error semi-log plot for the approximation of $f(x)$ for increasing truncation degree $n$ (right). We use the error measure \cref{eq:man-error} for the sum space approximation and \cref{eq:dual-man-error} for the dual sum space approximation.  The approximation space contains $10n +31$ functions for each value of $n$. }\label{fig:ex3-1}
\end{figure}
\begin{figure}[h!]
\centering
\includegraphics[width =0.49 \textwidth]{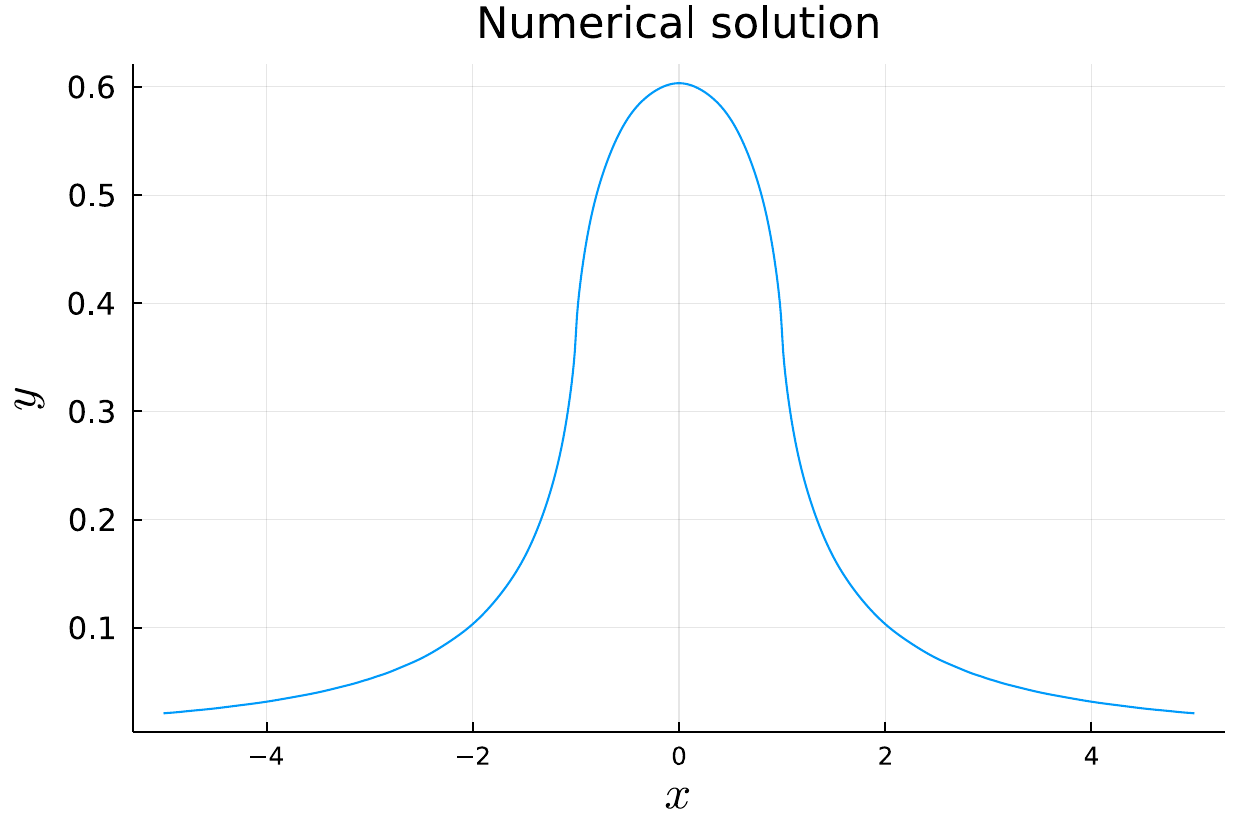}
\includegraphics[width =0.49 \textwidth]{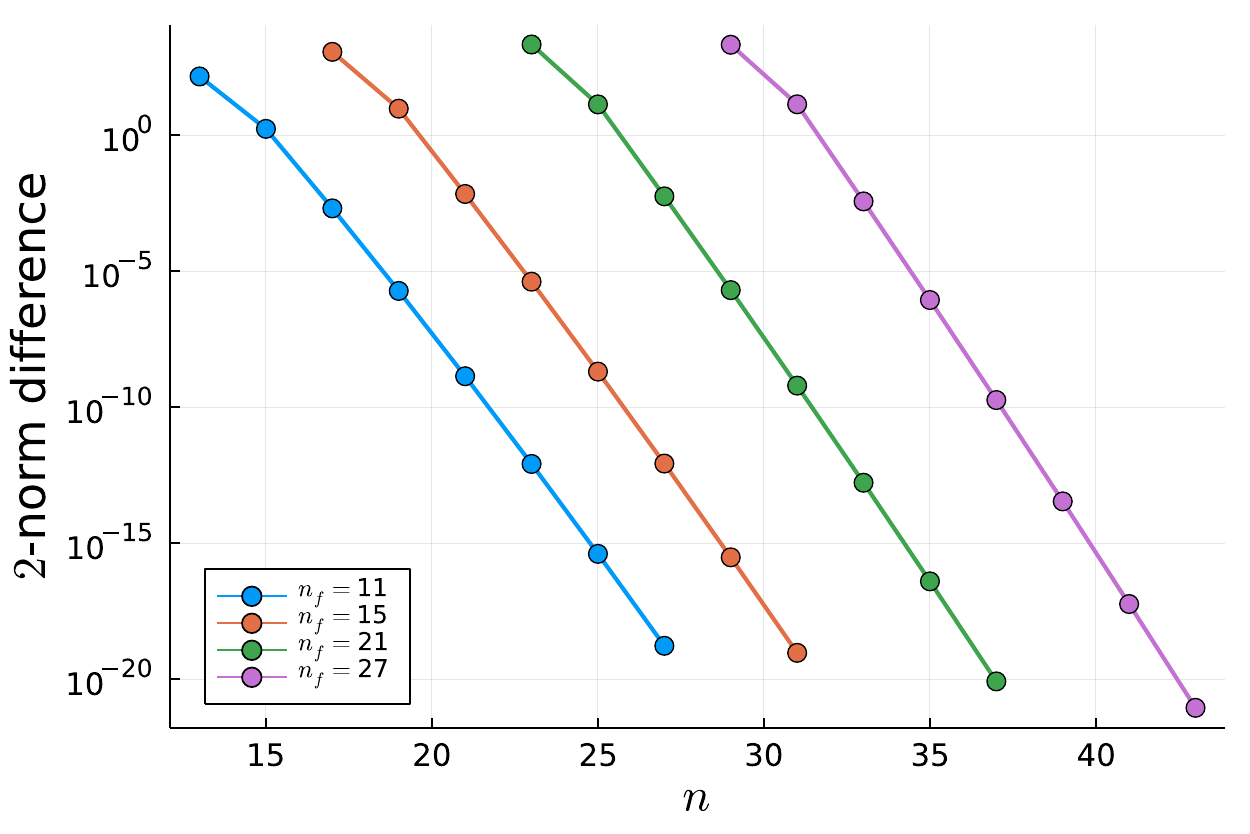}
\caption{Numerical approximation of the solution to \cref{eq:ex3-1}, $n=41$ (left). $l^2$-norm difference of the coefficients of the expansion of the solution, where we fix the right-hand side expansion at degree $n_f =11, 15, 21$, and 27 (right).}\label{fig:ex3-2}
\end{figure}

As in the previous example, we investigate the behaviour of the expansion coefficients in both the sum space and dual sum space for increasing degree $n$ in \cref{fig:rhs-jump-coeffs}. The result is interesting; the norm begins large and appears to blow-up, reaching a magnitude of $\mathcal{O}(10^{10})$ at $n=17$ for the sum space expansion and $\mathcal{O}(10^{6})$ at $n=13$ for the dual sum space expansion. Thereafter, the norm oscillates in the sum space expansion until $n=41$ where afterwards the norm drops at an exponential rate. For $n>300$ the norm has magnitude $\mathcal{O}(1)$. In the dual sum space expansion, the norm quickly decreases for $n>13$ and plateaus for $n\geq71$ at a value of magnitude of $\mathcal{O}(10^{-1})$. 

\begin{figure}[h!]
\centering
\includegraphics[width =0.49 \textwidth]{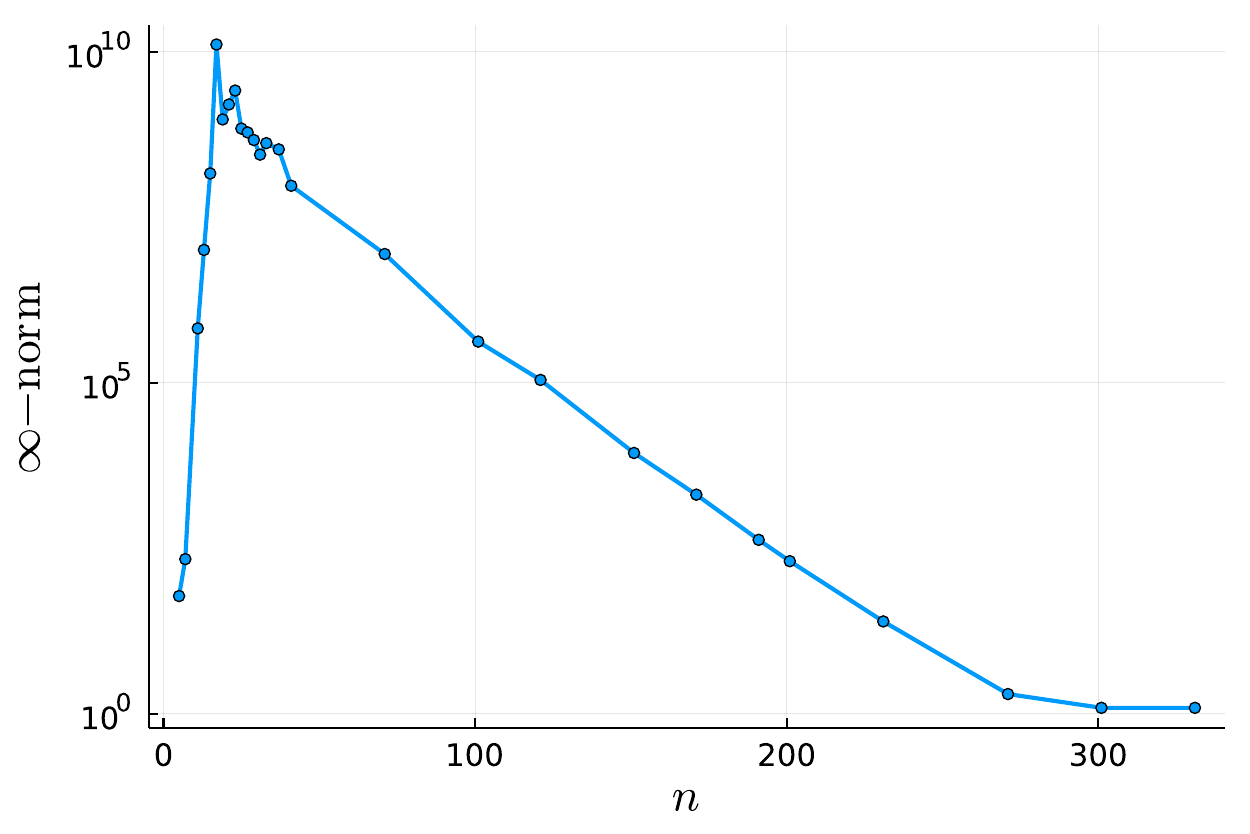}
\includegraphics[width =0.49 \textwidth]{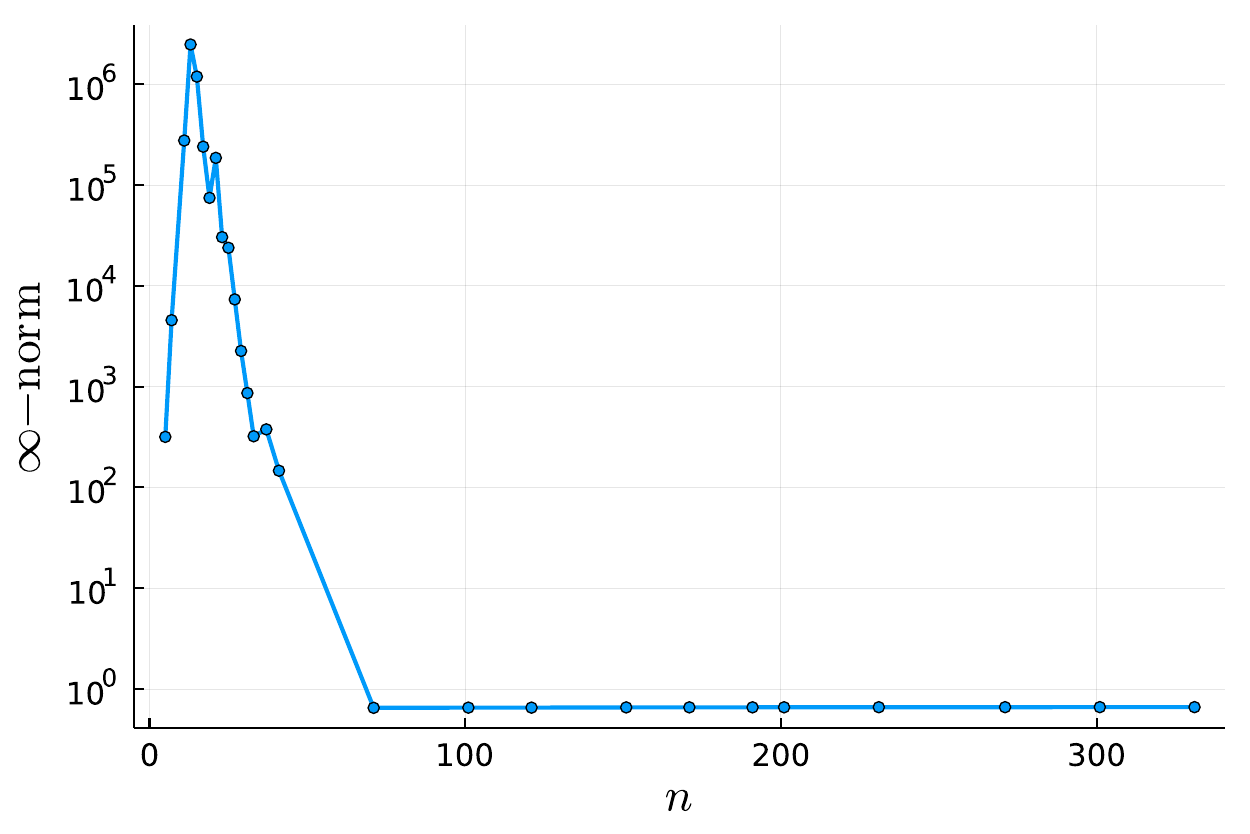}
\caption{The $l^\infty$-norm of the coefficient vectors of the sum space expansion (left) and the dual sum space expansion (right)  of \cref{eq:ex3-1} for increasing truncation degree $n$.  The approximation space contains $10n +31$ functions for each value of $n$. }\label{fig:rhs-jump-coeffs}
\end{figure}

\subsection{Nonsmooth right-hand side}
\label{sec:examples:nonsmooth}
Here we conduct the same investigation as in the previous example except now we choose the right-hand side
\begin{align}
f(x) = 
\begin{cases}
\arcsin(x) & \text{if} \; |x| \leq 1,\\
\arcsin(1) \mathrm{sgn}(x) e^{1-|x|} & \; \text{otherwise}.
\end{cases}
\label{eq:f-asin}
\end{align}
We use the same multiple-interval sum space as in the previous example as well as the same collocation points for the (dual) sum space expansion. 

In \cref{fig:rhs-asin} we plot the approximation of $f(x)$ expanded in the sum space at truncation degree $n=41$ together with a plot of the exact right-hand side. Moreover, we plot the convergence of the expansions of $f(x)$. The error norms are measured with the same metrics as in the previous example. Both expansions appear to stagnate at an error of $\mathcal{O}(10^{-8})$.  For smaller truncation degree values of $n$ the error in the dual sum space expansions is smaller, however, at $n=101$, the sum space expansion has the smallest error.

The appended sum space functions are $v_0(x)$, $\tilde{u}_{-1}(x)$, $v_{n+2}(x)$, and $\tilde{u}_{n+1}(x)$. Mathematica's \texttt{NIntegrate} routine is used to compute the necessary inverse Fourier transforms. In \cref{fig:sols-asin} we plot the numerical solution $u(x)$ of \cref{eq:fpde} (where $\lambda = 1$, $\mu = \eta = 0$). Again as we do not have an explicit solution, we truncate the right-hand side expansion at degree $n_f =11, 15, 21$, and 27. We then measure the 2-norm difference in the sum space coefficient vectors at truncation degree $n$ and $n-2$ of the solution for $n = n_f+2, n_f+4, \dots, n_f + 16$. We observe spectral convergence in the coefficient vectors. 
\begin{figure}[h!]
\centering
\includegraphics[width =0.49 \textwidth]{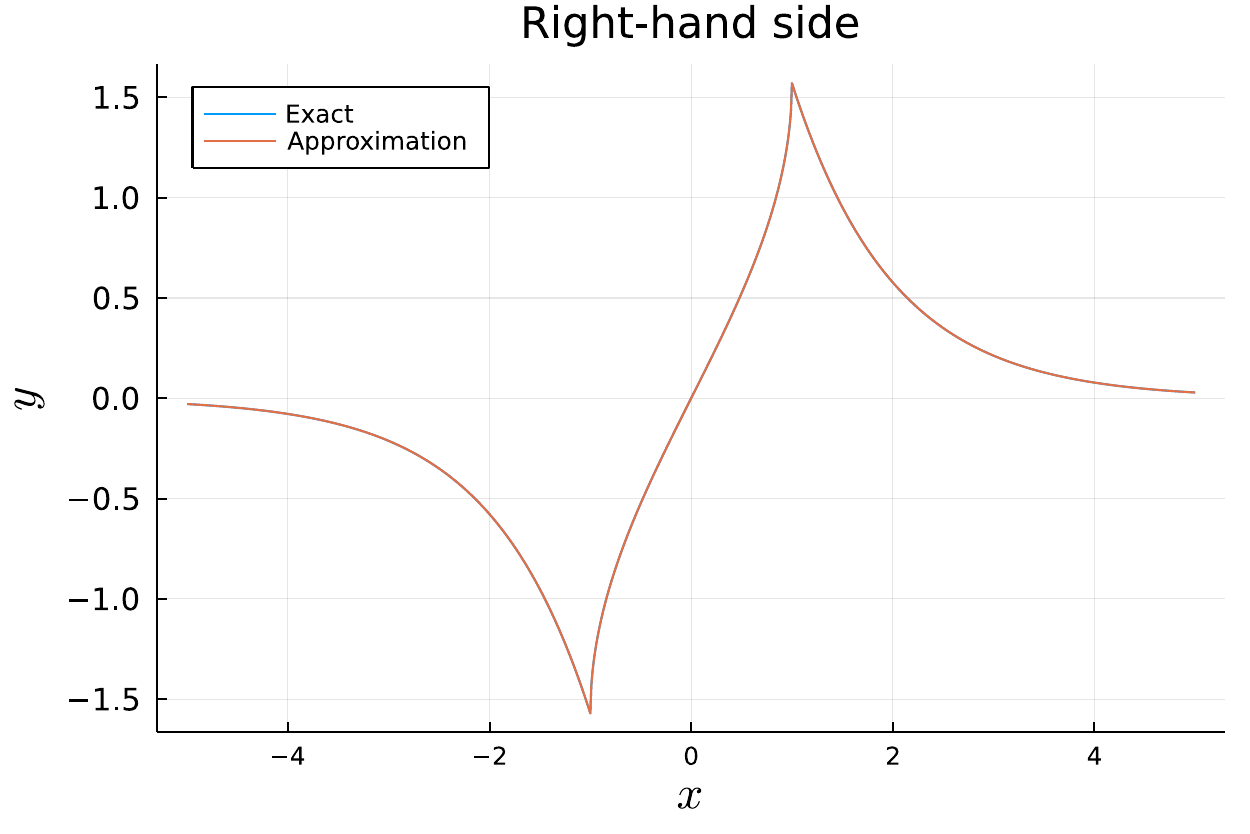}
\includegraphics[width =0.49 \textwidth]{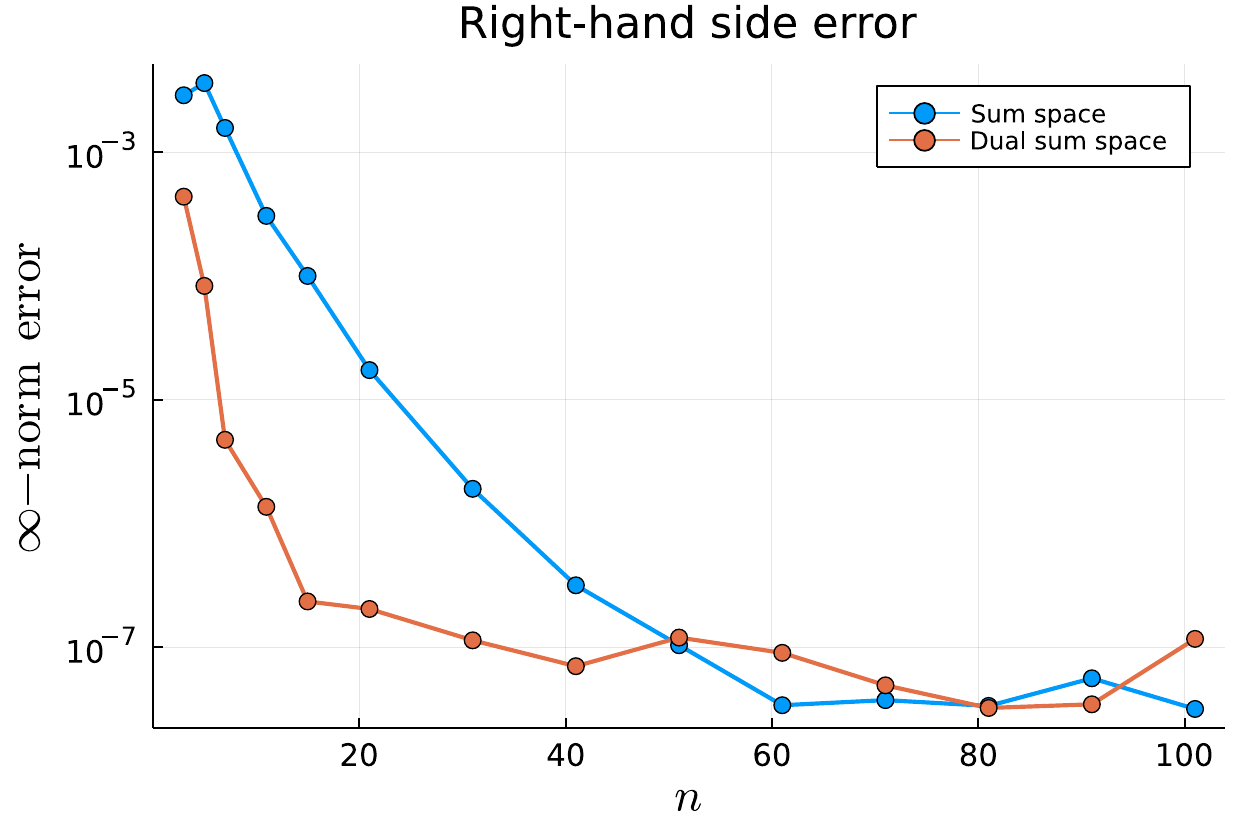}
\caption{The exact and sum space approximated $f(x)$ as defined in \cref{eq:f-asin} $(n=41)$ (left) and the $l^\infty$-norm error semi-log plot for the approximation of $f(x)$ for increasing truncation degree $n$ (right).  The approximation space contains $10n +31$ functions for each value of $n$.  We use the measure \cref{eq:man-error} for the sum space approximation and \cref{eq:dual-man-error} for the dual sum space approximation.}\label{fig:rhs-asin}
\end{figure}
\begin{figure}[h!]
\centering
\includegraphics[width =0.49 \textwidth]{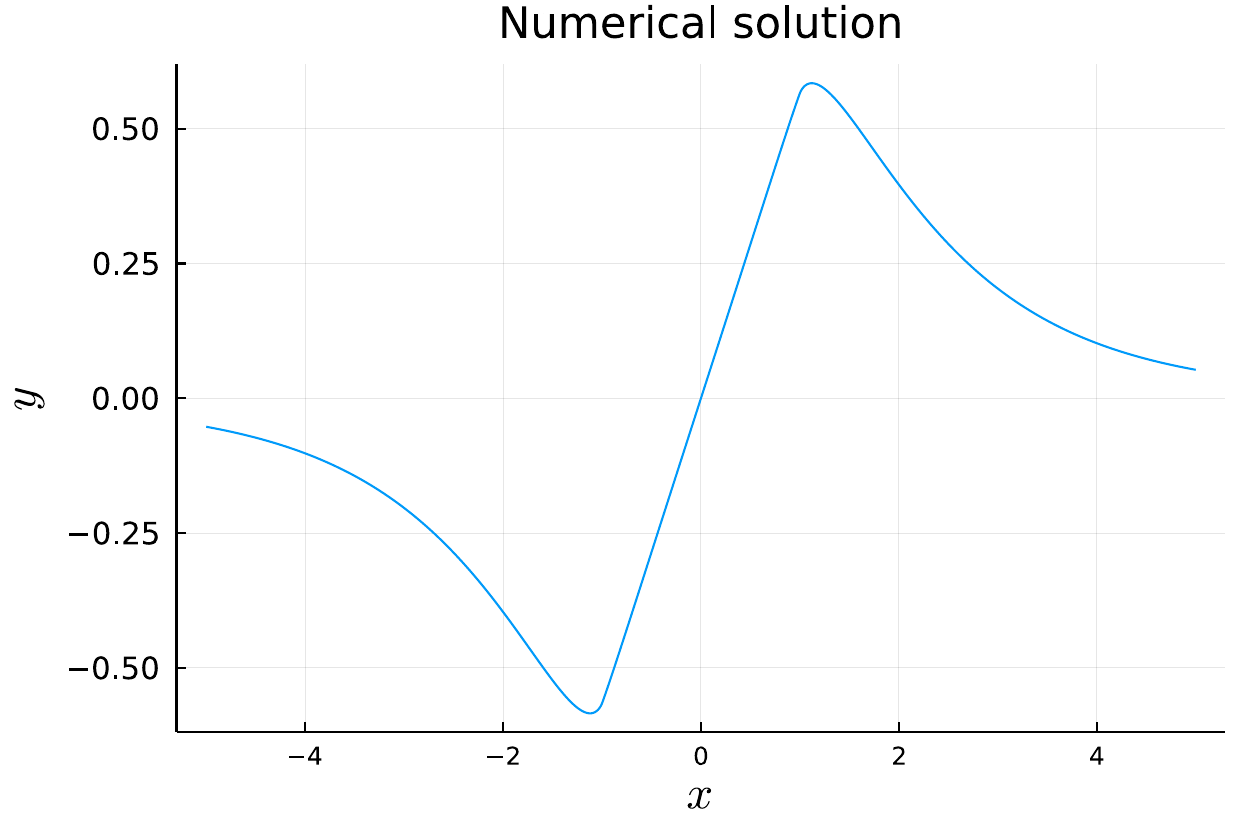}
\includegraphics[width =0.49 \textwidth]{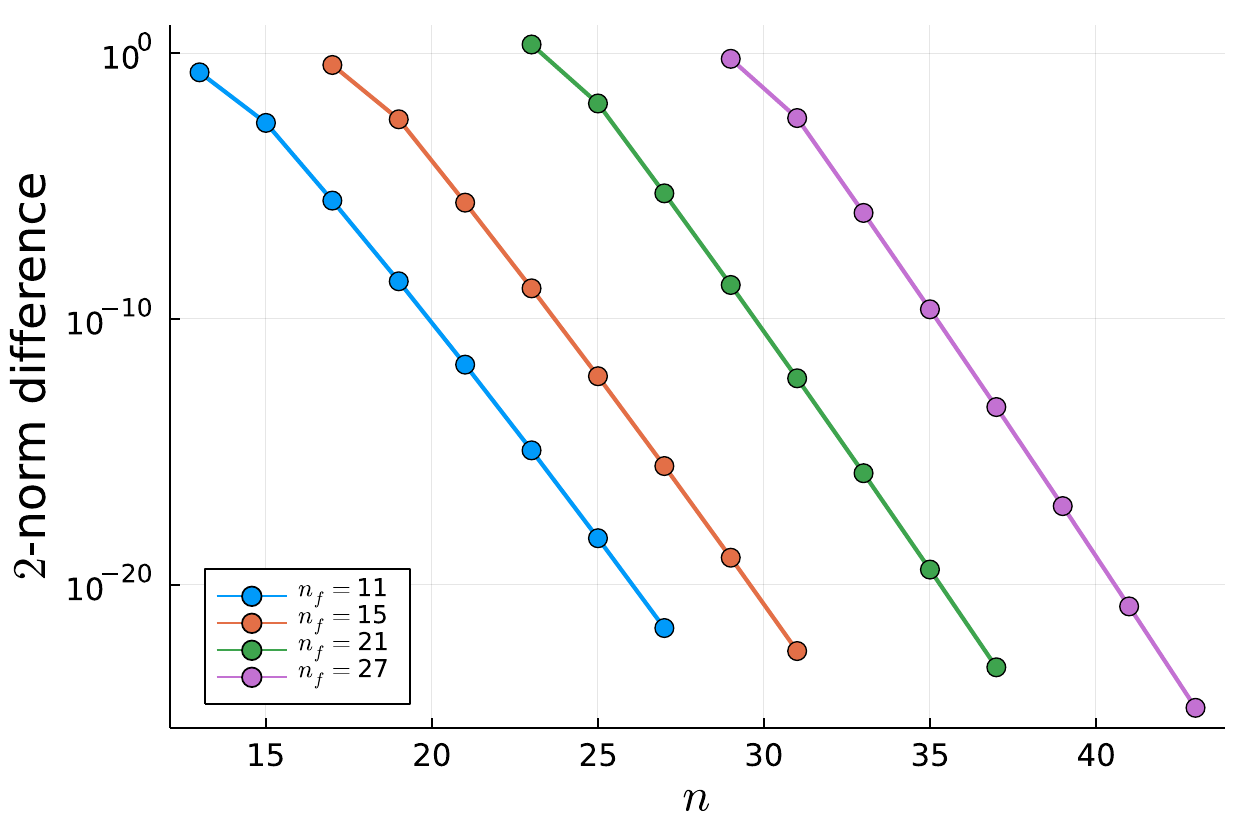}
\caption{Numerical approximation of the solution to the nonsmooth right-hand side problem, $n=41$ (left). $l^2$-norm difference of the coefficients of the expansion of the solution, where we fix the right-hand side expansion at degree $n_f =11, 15, 21$ and 27 (right).}\label{fig:sols-asin}
\end{figure}

As in the previous two examples, we investigate the behaviour of the expansion coefficients in both the sum space and dual sum space for increasing degree $n$ in \cref{fig:rhs-asin-coeffs}. As in the discontinuous right-hand side example, the $l^\infty$-norm of the coefficient vector begins large and appears to blow-up, reaching a magnitude of approximately $\mathcal{O}(10^{6})$ at $n=17$ for the sum space expansion and $\mathcal{O}(10^{5})$ at $n=13$ for the dual sum space expansion. Thereafter, the norm decreases and for $n\geq 71$ plateaus around $\mathcal{O}(10^4)$ in both expansions.  The convergence rate may be surprising at first glance. The sum space and the dual sum space contain functions that themselves are nonsmooth. Since the interval $[-1,1]$ is included in $\vect{I}$, then the nonsmoothness in the sum space functions aligns with the nonsmoothness in the right-hand side. Hence, the basis functions are able to approximate the right-hand side at a surprising rate of accuracy.

A stagnation higher than machine precision (in this case at $10^{-8}$) is typical for approximation spaces that are frames and is at least partially explained by \cref{th:convergence}. Essentially, the choice of $\epsilon$ as the cutoff in an $\epsilon$-truncated SVD factorization dictates a minimum error in the expansion of the right-hand side. However, making $\epsilon$ too small leads to numerical instability.  It is possible that with more fine-tuning, we may be able to improve the accuracy before the stagnation in the error.

\begin{figure}[h!]
\centering
\includegraphics[width =0.49 \textwidth]{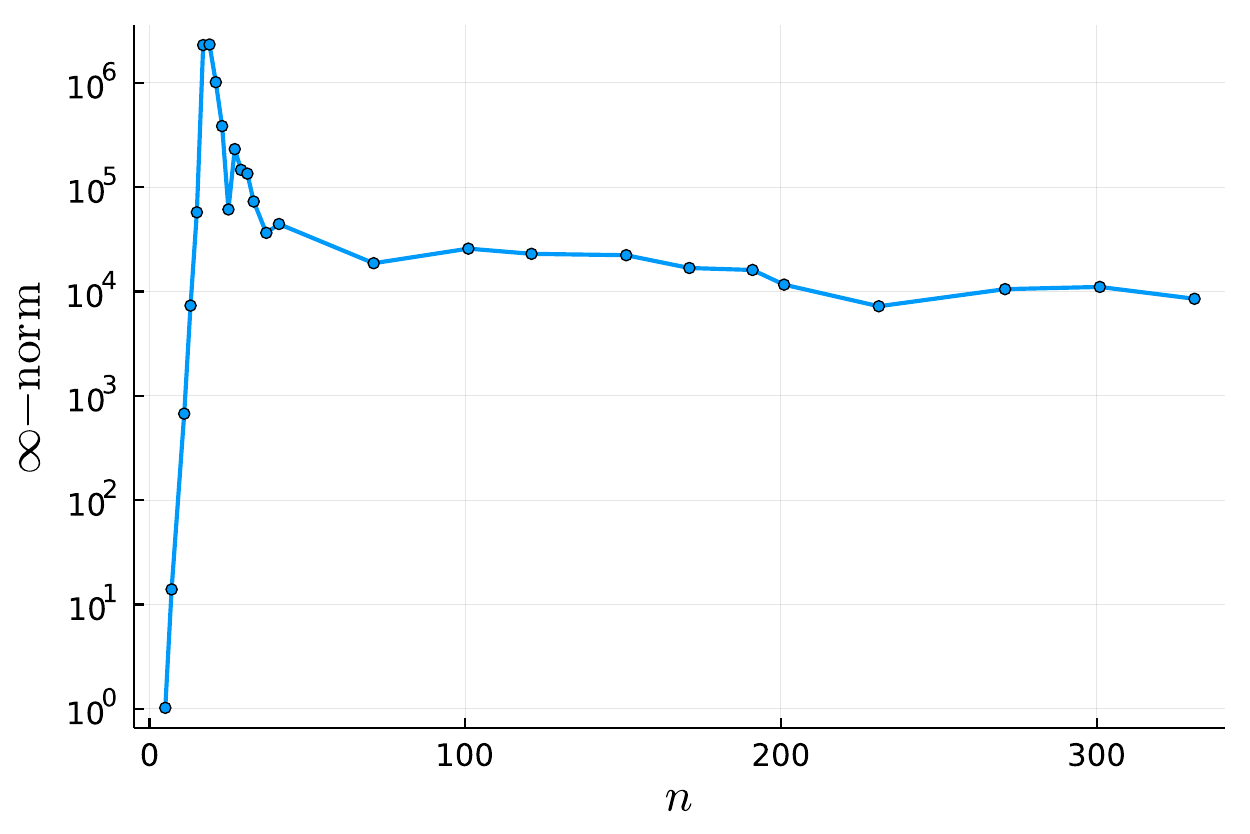}
\includegraphics[width =0.49 \textwidth]{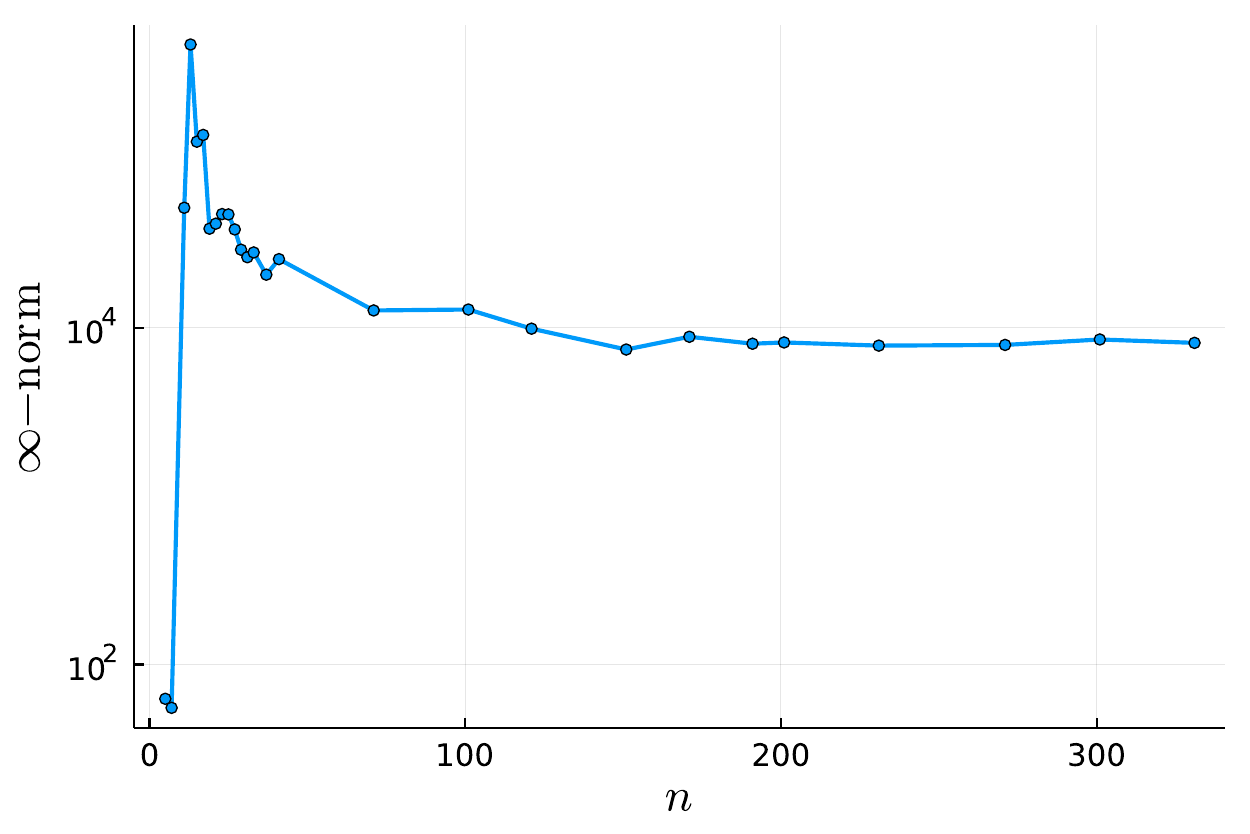}
\caption{The $l^\infty$-norm of the coefficient vector of the sum space (left) and the dual sum space (right) expansions of \cref{eq:f-asin} for increasing truncation degree $n$.  The approximation space contains $10n +31$ functions for each value of $n$. }\label{fig:rhs-asin-coeffs}
\end{figure}

\subsection{Fractional heat equation}
\label{sec:frac-heat-example}
Consider the fractional heat equation, i.e.~the following time-dependent fractional PDE:
\begin{align}
\begin{split}
\partial_t u(x,t) + (-\Delta)^{1/2} u(x,t) = 0, \;\;
 u(x,t) \to 0 \; \text{as} \; |x| \to \infty.
 \end{split}
 \label{eq:ex1}
\end{align}
We pick the following two choices for the initial condition:
\begin{align}
 u(x,0) &=(x^2+1)^{-1}, \label{eq:ic1}\\
 u(x,0) &= W_0(x).
  \label{eq:ic2}
\end{align}
The initial condition \cref{eq:ic1} coincides with a constant scaling of the fundamental solution, $t(2\pi(x^2+t^2))^{-1}$, to the fractional heat equation at $t=1$ \cite[Sec.~2]{Vazquez2018}. Hence, \cref{eq:ex1} with the initial condition \cref{eq:ic1} has the exact solution:
\begin{align}
u(x,t) = \frac{1+t}{x^2 + (1+t)^2}.
\label{eq:ex1-exact}
\end{align}
We discretize \cref{eq:ex1} in time with a backward Euler discretization to yield, for $k \geq 0$,
\begin{align}
\begin{split}
\lambda u_{k+1}(x) + (-\Delta)^{1/2} u_{k+1}(x) &= \lambda u_k(x), \\
 u_{k+1}(x) &\to 0 \; \text{as} \; |x| \to \infty,
 \end{split}
 \label{eq:ex1-2}
\end{align}
where $u_0(x) =   (x^2+1)^{-1}$ or $W_0(x)$ and $\lambda = (\Delta t)^{-1}$ where $\Delta t$ is the time step. By choosing $\mu = \eta = 0$, we recover the equation \cref{eq:fpde} which is discretized in space using our spectral method. We choose the appended sum space functions $v_0(x)$, $\tilde{u}_{-1}(x)$, $v_1(x)$, and $\tilde{u}_{0}(x)$. We use an FFT (as outlined in \cref{sec:FFT}) to approximate the necessary inverse Fourier transforms. 

Given the tuple of intervals $\vect{I}$ and the coefficient vector solution $\vectt{u}_k$ at time step iterate $k$, we solve the following for the coefficient vector solution $\vectt{u}_{k+1}$:
\begin{align}
(\lambda \vectt{E} + \vectt{A}^{\vect{I}})\vectt{u}_{k+1} = \lambda (\vectt{E} \vectt{R})\vectt{u}_k. \label{eq:fractional-heat1}
\end{align}
$\vectt{E}$ and $\vectt{A}^{\vect{I}}$ are defined in \cref{prop:element-mappings} and $\vectt{R}$ is defined in \cref{sec:Id:Sp:Sd}. We reiterate that $(\lambda \vectt{E} + \vectt{A}^{\vect{I}})$ is block diagonal. Hence, after the right-hand side $\lambda (\vectt{E} \vectt{R})\vectt{u}_k$ is computed, we decompose the solve into $K$ smaller linear solves, where $K$ is the number of intervals, as in \cref{eq:block-decompose}. Our approximate solution is given by
\begin{align}
u(x,k \Delta t ) \approx \Skp(x) \vectt{u}_k. 
\end{align}

We pick the multiple-interval sum space centred at the intervals $[-5,-3]$, $[-3,-1]$, $[-1,1]$, $[1,3]$, and $[3,5]$. We fix the truncation degree $n=5$. The initial condition \cref{eq:ic1} is expanded in the sum space via the least-squares matrix as discussed in \cref{sec:expansion}. We use 5001 equally spaced points in the interval $[-5,5]$ and 501 equally spaced points in $[-20,-5]$ and $[5,20]$ each. This results in 26,001 collocation points. The initial condition \cref{eq:ic2} is represented exactly as one of the intervals is $[-1,1]$, resulting in zero initial error. We choose a time step of $\Delta t = 10^{-2}$. Snapshots of the solution at $t = 0, 1/2$ and 1 are found in \cref{fig:ex1-3b} and contour plots of $t$ against $x$ are given in \cref{fig:ex1-4}. 

In \cref{fig:ex1-2} we estimate the errors of our numerical solutions. In \cref{fig:ex1-2a} we compare our numerical solution with the exact solution \cref{eq:ex1-exact}. An exact solution for the initial condition \cref{eq:ic2} is not available. Hence, in \cref{fig:ex1-2b}, we compare our approximate solution with one computed via an approximate inverse Fourier transform at each time step. By using that $\mathcal{F}[W_0] = \pi J_1(|\omega|)/|\omega|$ and an induction argument, one can show that \cref{eq:ex1-2} has the solution
\begin{align}
u_{k+1}(x) = \mathcal{F}^{-1}\left[\frac{\pi J_1(|\omega|)}{|\omega|(1+\lambda^{-1} |\omega|)^{k+1}}\right].
\end{align}
For each iteration $k$, we approximate the solution $u_k(x)$ with an FFT as outlined in \cref{sec:FFT}. For 100 iterations, this took 92.2 seconds. By contrast, 100 solves of \cref{eq:fractional-heat1} took 0.0162 seconds. The setup took 6.95 seconds to compute the four FFTs to approximate the solutions required to construct the appended sum space $\Skp(x)$ (as all the intervals have the same width) and another 1.06 seconds to compute their expansions in the sum space $\Sk(x)$. This totals to 8.01 seconds, approximately an 11 times speedup. We note that the setup is not tied to the initial condition or knowledge of its Fourier transform. Hence, after one initial setup, the time evolution of any initial condition can be computed in fractions of a second via the repeated solve of \cref{eq:fractional-heat1}. The $l^\infty$-norm error is measured on a 1001-point equally spaced grid at $[-20,20]$, i.e.
\begin{align}
\max_{x \in \{-20,-19.99,\cdots,19.99,20\}} | u(x) - \Sk(x) \vectt{u}|.
\label{eq:man-error-heat}
\end{align}

\begin{figure}[h!]
\centering
\includegraphics[width =0.49 \textwidth]{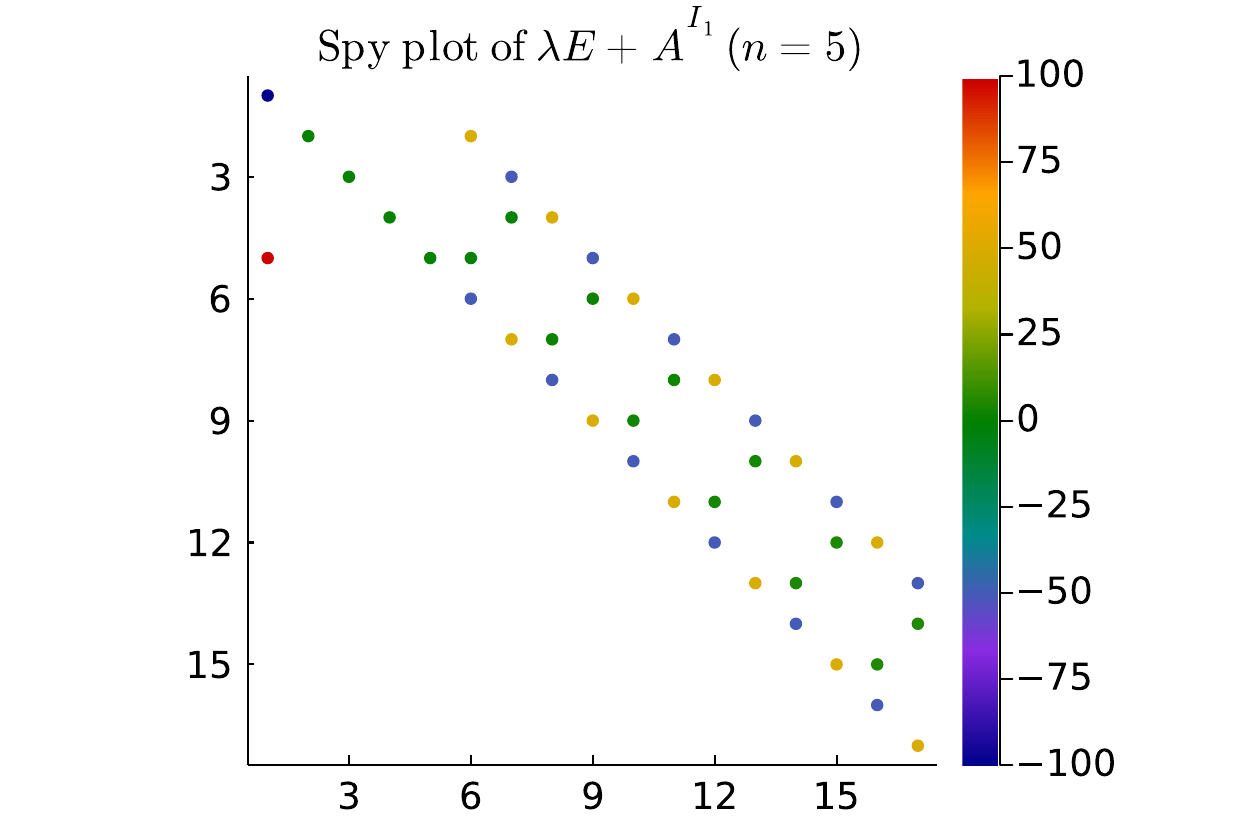}
\includegraphics[width =0.49 \textwidth]{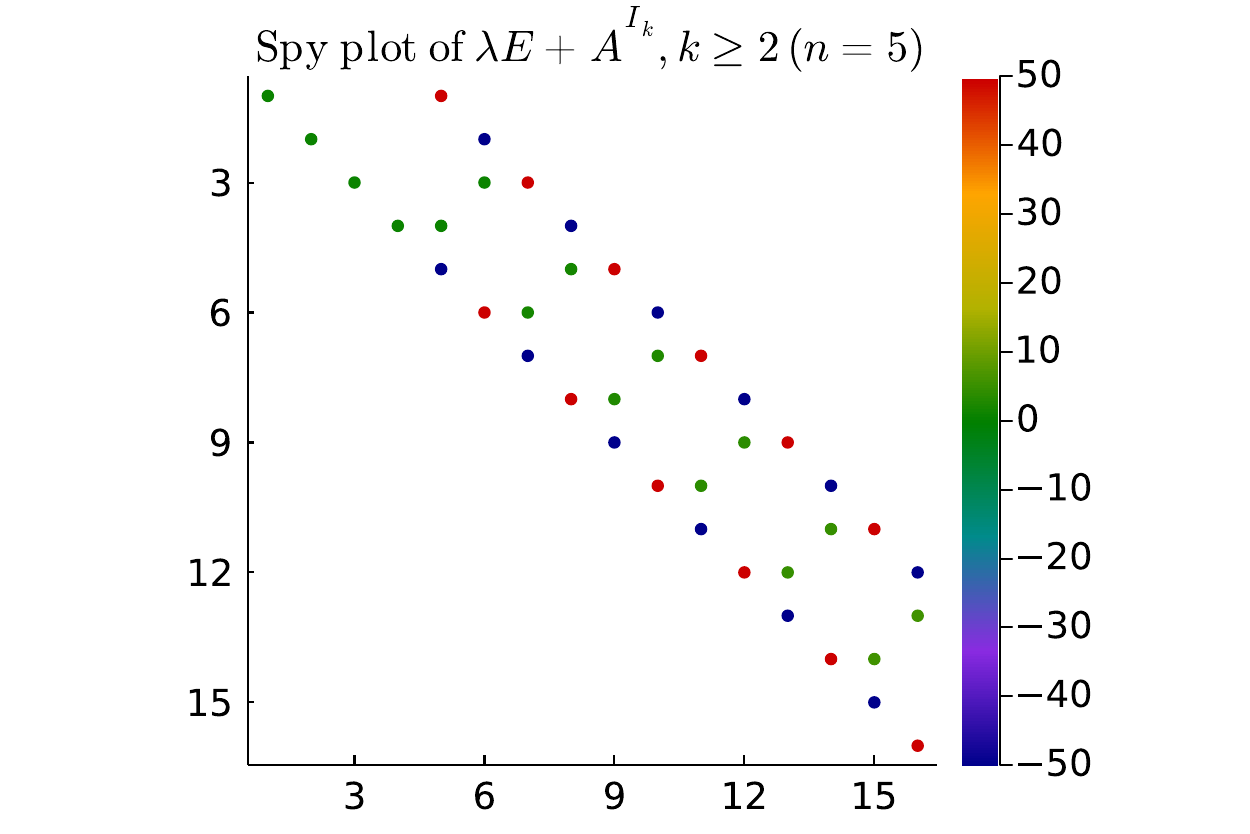}
\caption{\texttt{Spy} plots of the matrices in the linear systems after decomposing \cref{eq:fractional-heat1} block-wise with $\lambda = 100$  and using the additional functions $v_0$, $\tilde{u}_{-1}$,  $v_1$, $\tilde{u}_{0}$. The matrices are banded and sparse.}\label{fig:ex1-1}
\end{figure}
\begin{figure}[h!]
\centering
\subfloat[Initial condition \cref{eq:ic1}]{\includegraphics[width =0.49 \textwidth]{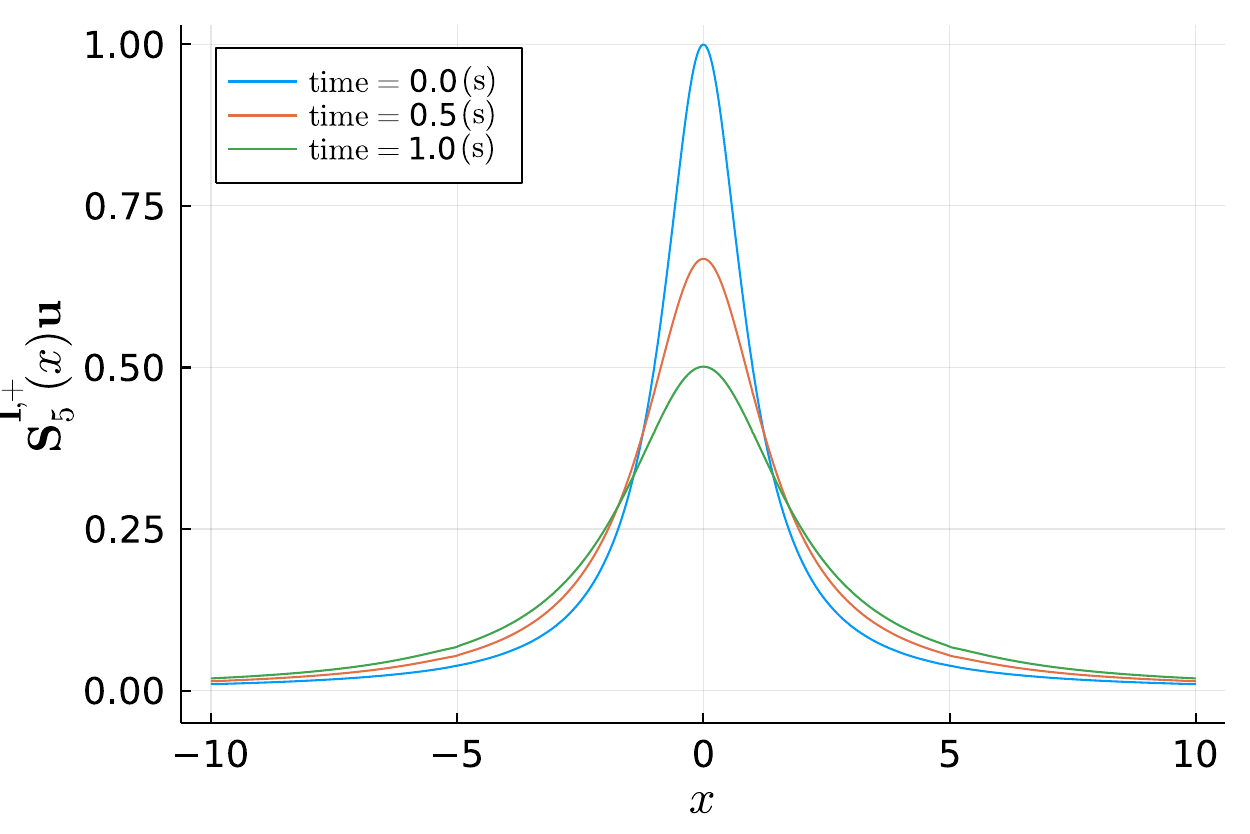}}
\subfloat[Initial condition \cref{eq:ic2}]{\includegraphics[width =0.49 \textwidth]{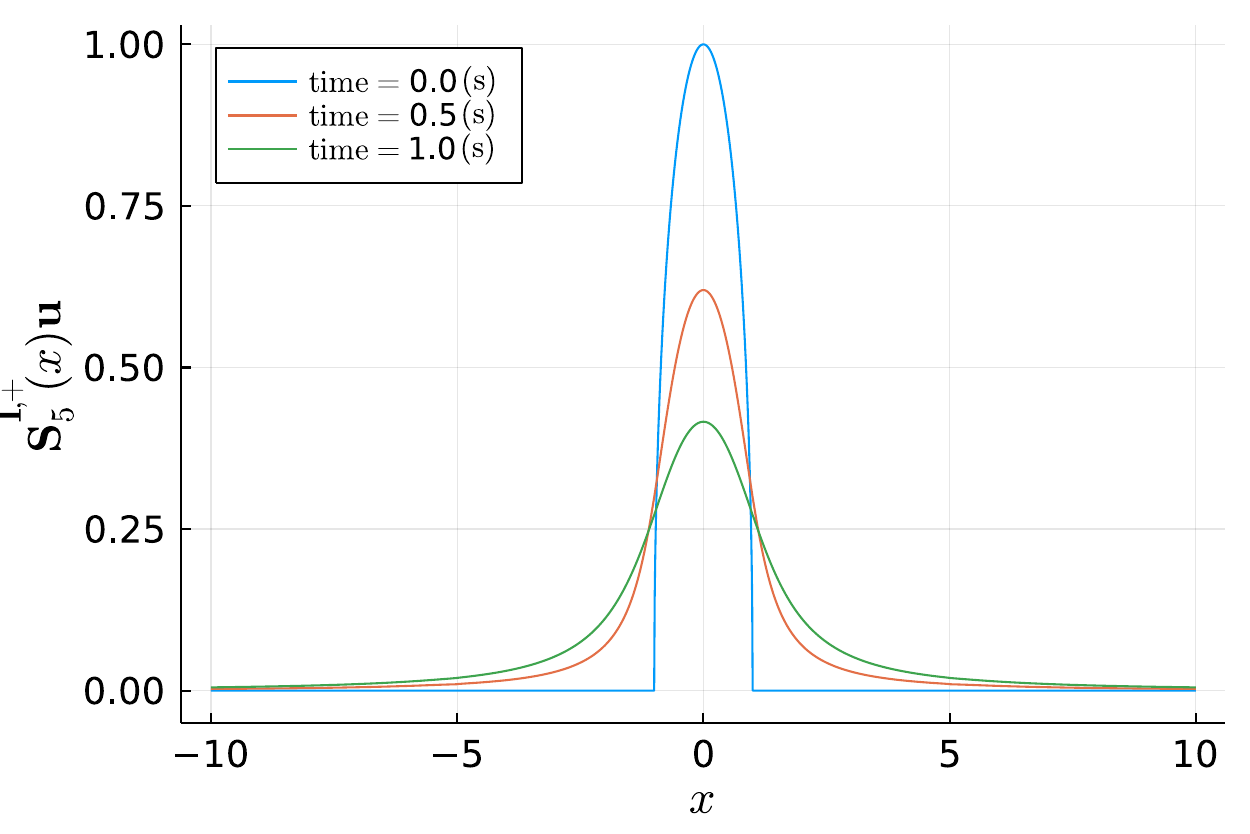} \label{fig:ex1-3b}}
\caption{Snapshots of the numerical solutions of the fractional heat equation at $t=0$, $0.5$, and 1 using the truncated space $\vectt{S}^{\vect{I},+}_5(x)$, $\Delta t = 10^{-2}$.}\label{fig:ex1-3}
\end{figure}
\begin{figure}[h!]
\centering
\subfloat[Initial condition \cref{eq:ic1}]{\includegraphics[width =0.49 \textwidth]{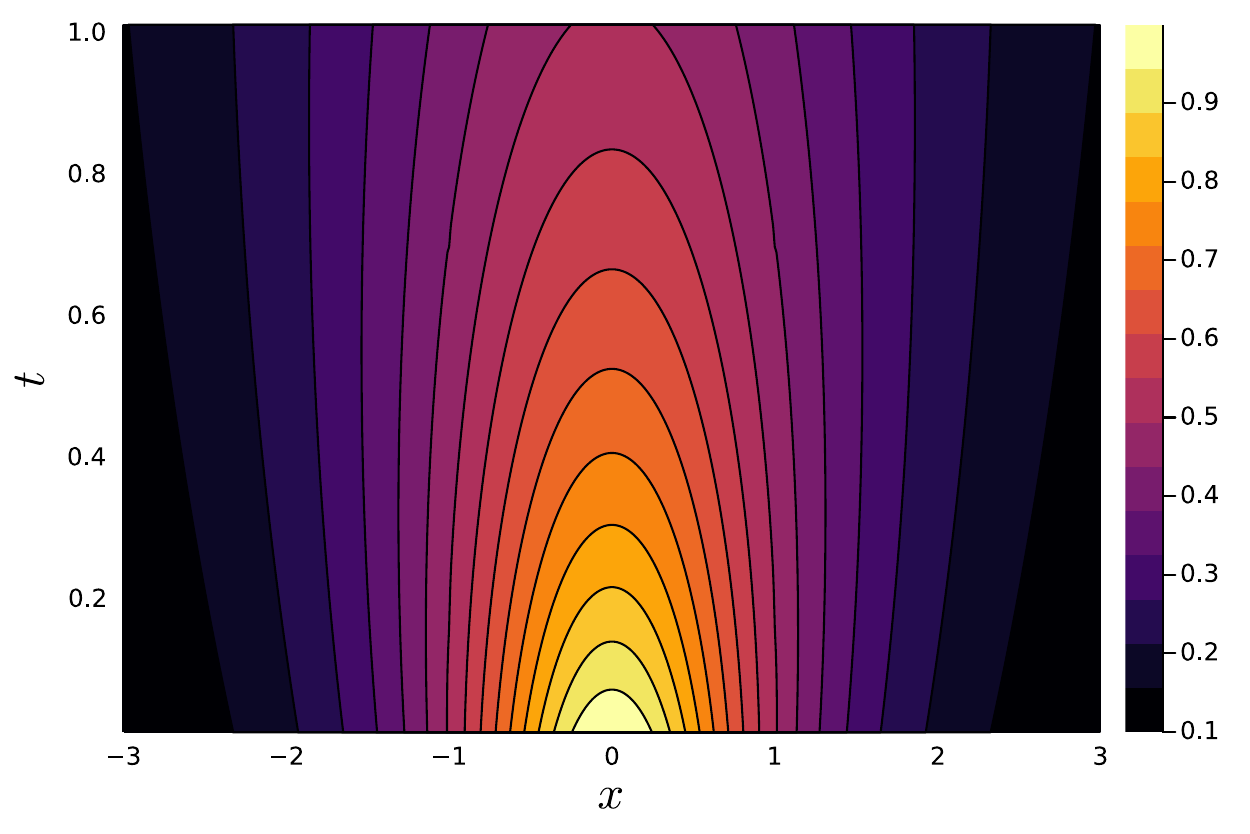}}
\subfloat[Initial condition \cref{eq:ic2}]{\includegraphics[width =0.49 \textwidth]{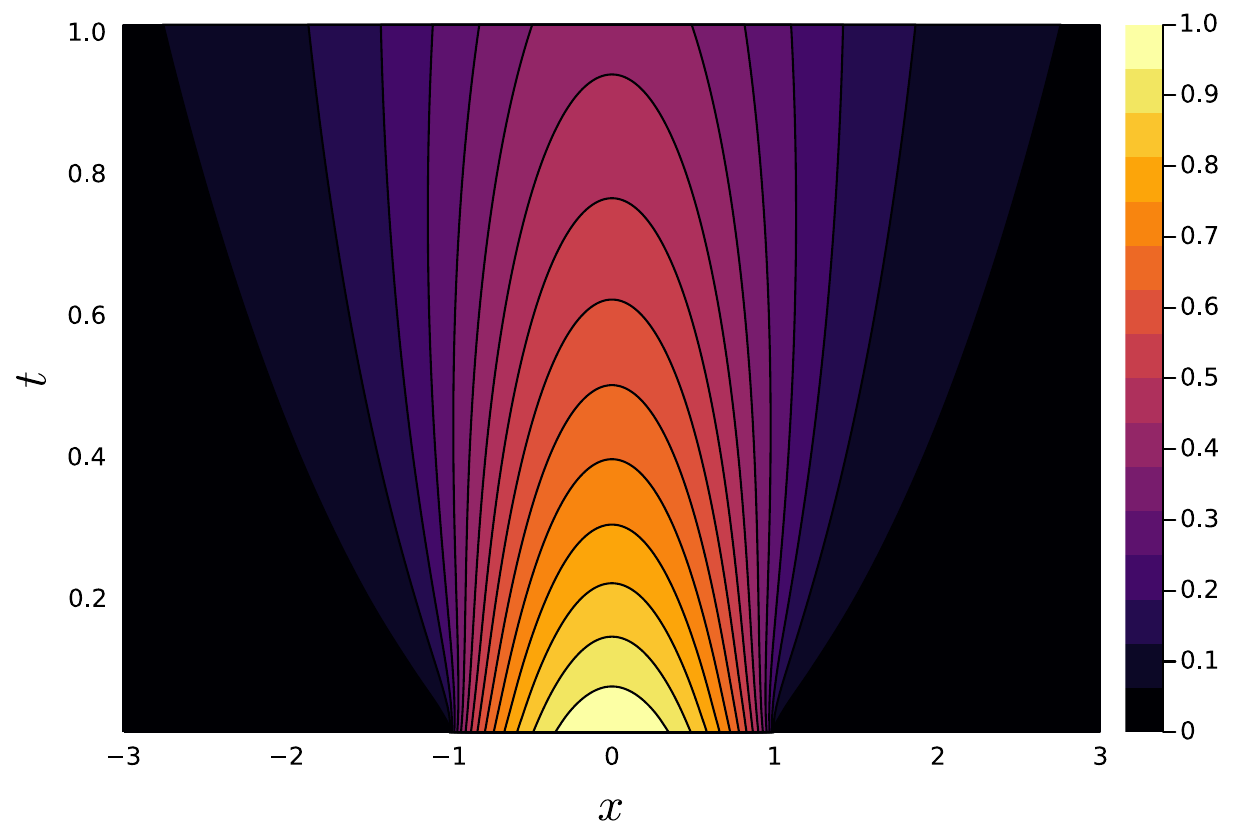}}
\caption{Contour plots of the numerical solutions to the fractional heat equation.}\label{fig:ex1-4}
\end{figure}
\begin{figure}[h!]
\centering
\subfloat[Initial condition \cref{eq:ic1}]{\includegraphics[width =0.49 \textwidth]{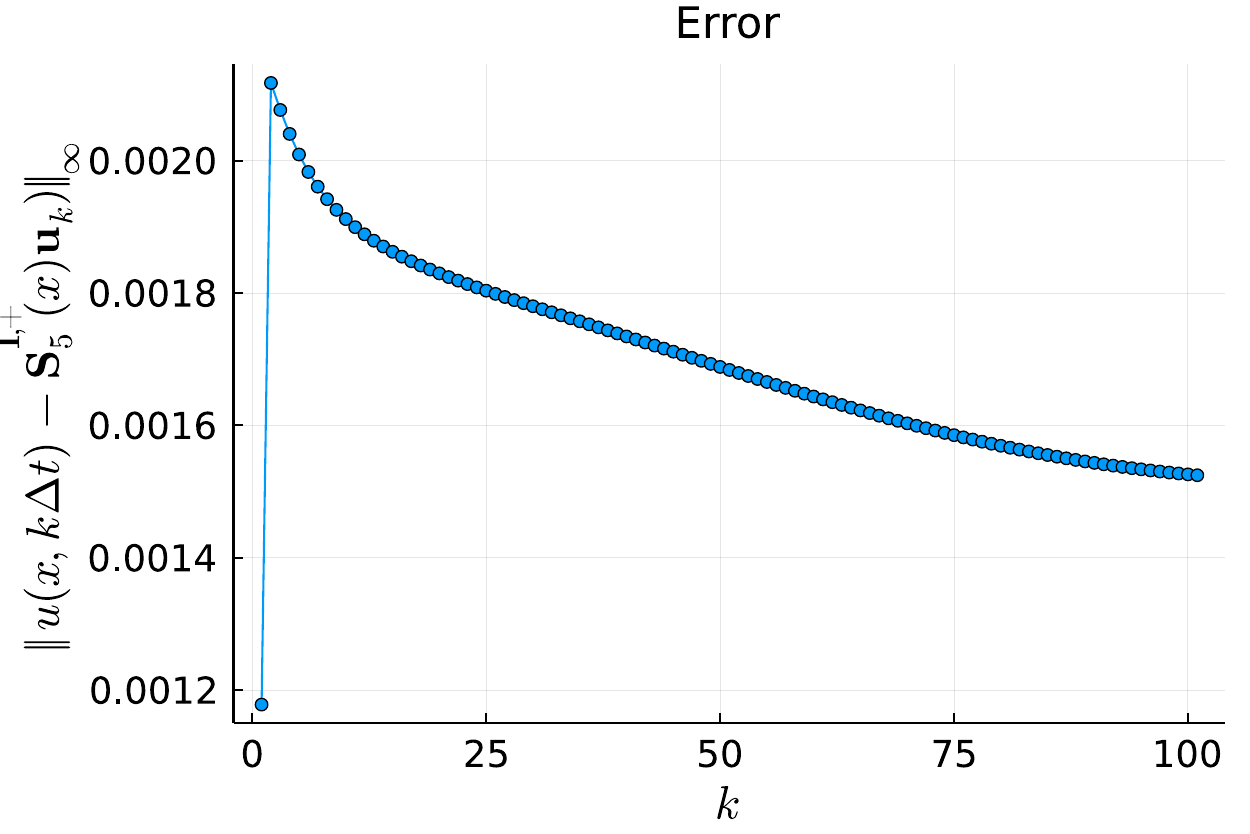} \label{fig:ex1-2a}}
\subfloat[Initial condition \cref{eq:ic2}]{\includegraphics[width =0.49 \textwidth]{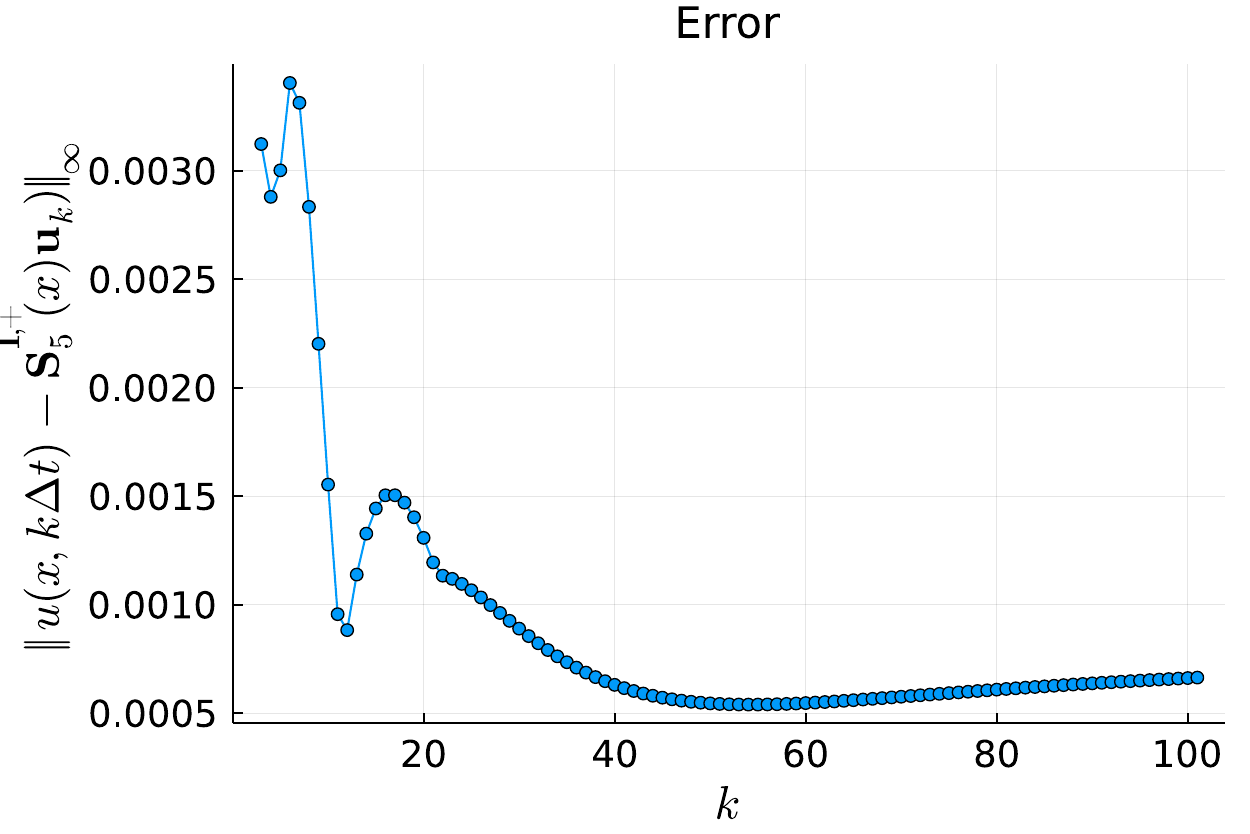} \label{fig:ex1-2b}}
\caption{Error as measured by \cref{eq:man-error-heat} of the numerical solutions to the fractional heat equation for iterate $k$ as measured against (a) the exact solution and (b) the approximate FFT solution.}\label{fig:ex1-2}
\end{figure}

\begin{remark}
The choice of time discretization is independent to the spectral method. In particular coupling the spectral method to a Runge--Kutta time discretization is no more difficult than for a standard finite difference of finite element discretization.
\end{remark}

\subsection{Wave propagation}

Consider the fractional Hilbert wave equation:
\begin{align}
[(-\Delta)^{1/2} + \Hil + \frac{\partial^2}{\partial t^2}] u(x,t) = f(x,t). 
\end{align}
After a Fourier transform with respect to $t$, we recover the equation
\begin{align}
[(-\Delta)^{1/2}+ \Hil - \omega^2] \hat{u}(x,\omega) = \hat{f}(x,\omega). \label{eq:wave1}
\end{align}
Equation \cref{eq:wave1} is of the form \cref{eq:fpde} with $\lambda = -\omega^2$, $\mu = 1$, and $\eta=0$. Thus the idea is to solve \cref{eq:wave1} to find $\hat{u}(x,\omega)$, for a range of values of $\omega$, and then take the inverse Fourier transform to recover the approximation of the physical solution $u(x,t)$. We choose the datum 
\begin{align}
f(x,t) = W_4(x) \me^{-t^2} \implies \hat{f}(x,\omega) = \sqrt{\pi} W_4(x) \me^{-\omega^2/4}.
\end{align}
This corresponds to a forcing term supported on $x \in [-1,1]$ that exponentially decays in time.

To fix a unique solution, we enforce that $\lim_{|x| \to \infty} \hat{u}(x,\omega) \to 0$. For this example, we approximate the additional functions $\tilde{u}_{-1}(x)$, $\tilde{u}_{n+1}(x)$, $v_0(x)$ and $v_{n+2}(x)$ via an FFT (as described in \cref{sec:FFT}). We pick $n=7$ and a uniform distribution of $\omega$ in the range $[0, 20]$ in increments of 1/10. The inverse Fourier transform from $\hat{u}(x,\omega)$ to $u(x,t)$ is approximated via an FFT. We provide the contour plots of the approximate solutions of $\hat{u}(x,\omega)$ and $u(x,t)$ in \cref{fig:ex4-1}. Even though the right-hand side is even, there is a drift in the solution $u(x,t)$ caused by the Hilbert transform operator. Note that the Hilbert transform maps even functions to odd functions and vice versa.

\begin{figure}[h!]
\centering
\includegraphics[width =0.49 \textwidth]{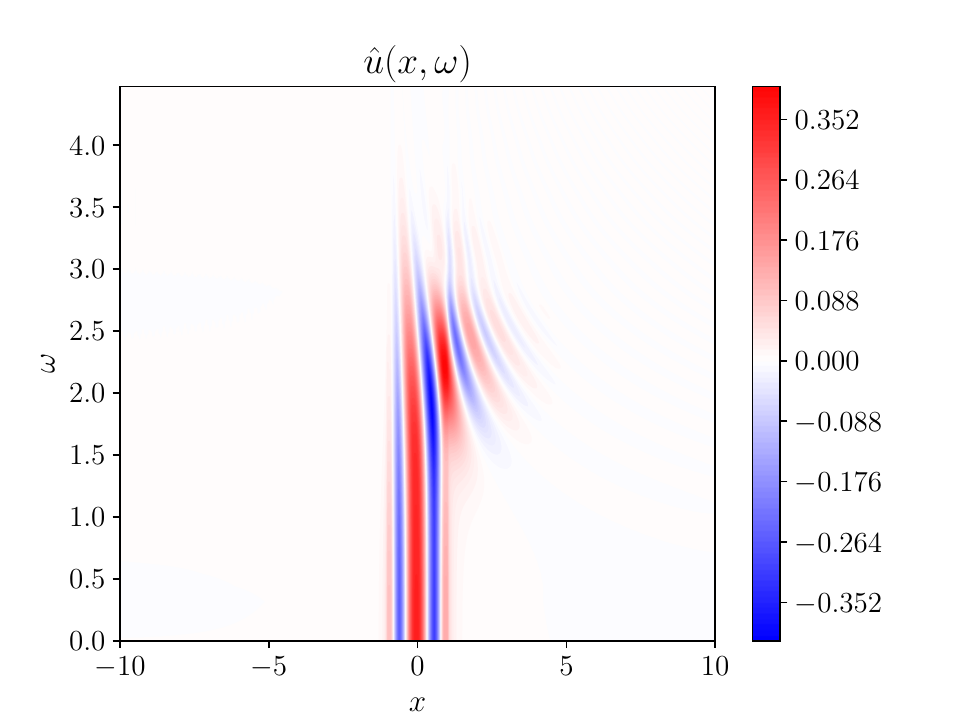}
\includegraphics[width =0.49 \textwidth]{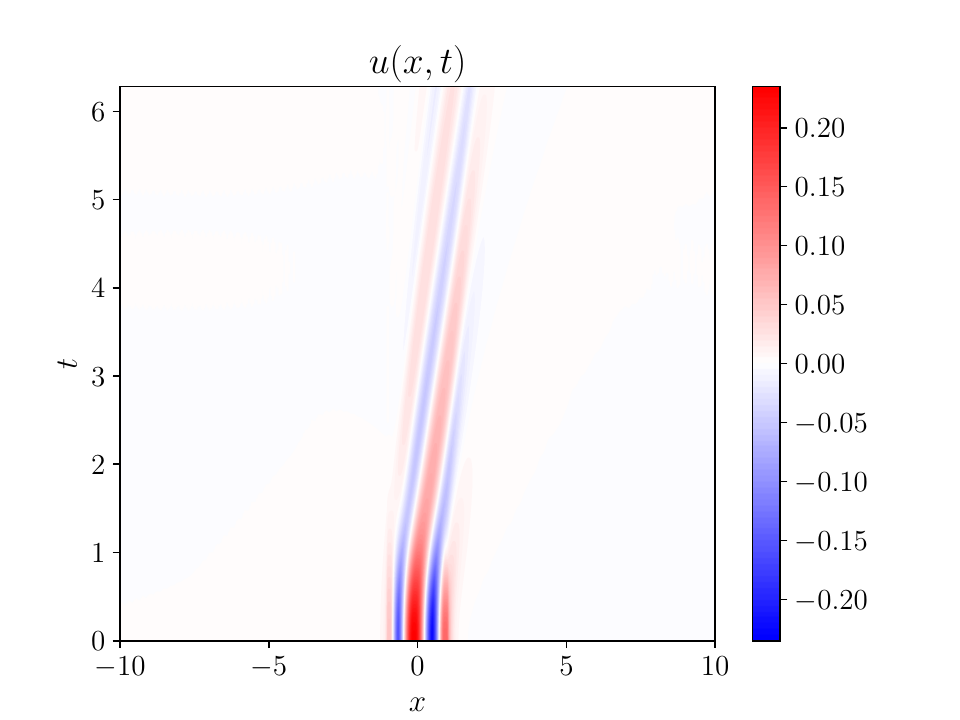}
\caption{Contour plot of the approximation of $\hat{u}(x,\omega)$ (left) and $u(x,t)$ (right) with $f(x,t) = W_4(x) \me^{-t^2}$ to the fractional Hilbert wave equation.}\label{fig:ex4-1}
\end{figure}

\section{Conclusions}
\label{sec:conclusions}
In this work we introduced a sparse spectral method for a one-dimensional fractional PDE posed on an unbounded domain that may contain the identity, Hilbert, derivative, and sqrt-Laplacian operators. The method is constructed by forming a sum space of weighted Chebyshev polynomials of the second kind extended to the whole of $\mathbb{R}$ by zero and their Hilbert transforms. We derive explicit identities for the actions of the identity, Hilbert, derivative, and sqrt-Laplacian operators which allows us to build our method. The operator applied to different affine transformations of the sum spaces decouples during the solve. Hence, the solve can be performed in parallel over each interval separately. Moreover, the induced matrices are sparse leading to fast sparse solves. Numerically, we observe spectral convergence when the data is smooth. We emphasize that the coupling of the different interval sum spaces occurs during the expansion of the right-hand side.

In future work we plan to orthogonalize the basis as well as extend the method to general fractional Laplacians, $(-\Delta)^s$, $s \in (0,1)$ by considering weighted Jacobi polynomials and their fractional Laplacian analogues \cite[Tab.~5]{Gutleb2023}. This will be achieved by combining the techniques introduced in \cite{Papadopoulos2023} for expanding the right-hand side with the methods developed in \cite{Hale2018} for the construction of linear systems. Moreover, by considering Zernike polynomials, the spectral method will be extended to two-dimensional problems posed on a disk \cite[Tab.~7]{Gutleb2023}.

\begin{appendices}
\crefalias{section}{appendix}
\label{sec:appendix}

\section[IFT via FFT]{Approximate inverse Fourier transform via the FFT}
\label{sec:FFT}
The setup of the spectral method requires four inverse Fourier transforms per interval. Although this can be reduced if the intervals are translations on $\mathbb{R}$ and are of the same size as resulting solutions are simply the equivalent translations. In this section we describe how to use the FFT to approximate the inverse Fourier transform. The FFT implements the discrete Fourier transform (DFT) or its inverse (IDFT) with $\mathcal{O}(N \log N)$ complexity where $N$ is the size of the input vector. Consider a vector $\vectt{u} \in \mathbb{R}^N$. We denote the IDFT as $\mathcal{F}_D^{-1}$ and define it as (as implemented in \texttt{ifft} of \texttt{FFTW.jl}):
\begin{align}
\mathcal{F}_D^{-1}[\vectt{u}]_j \coloneqq \frac{1}{N} \sum_{n=1}^N \exp \left(\mi \frac{2\pi(j-1)(n-1)}{N} \right) \vectt{u}_n, \;\; j = 1,\dots, N.
\end{align}
Suppose that $N$ is even. Then, we also define the shifted IDFT, $\hat{\mathcal{F}}_D^{-1}$ such that the components of $\mathcal{F}_D^{-1}[\vectt{u}]$ are reordered from $j=N/2+1,\dots,N,1,\dots, N/2 $, 
\begin{align}
\hat{\mathcal{F}}_D^{-1}[\vectt{u}]_j
= \begin{cases}
\mathcal{F}_D^{-1}[\vectt{u}]_{j+N/2} & 1 \leq j \leq N/2,\\
\mathcal{F}_D^{-1}[\vectt{u}]_{j-N/2} & N/2+1 \leq j \leq N.
\end{cases}
\label{eq:fft5}
\end{align}
This is implemented as \texttt{iffshift(ifft(u))} in \texttt{FFTW.jl}. Consider the approximation, $j=1,\dots,N$:
\begin{align}
\mathcal{F}^{-1}[u](x_j) \approx 
\frac{1}{2\pi} \int_{-W}^W u(\omega) \me^{\mi \omega x_j}\, \mathrm{d}\omega \approx \frac{\delta}{2\pi} \sum_{n=0}^{N-1} u(\omega_n) \me^{\mi \omega_n x_j}. 
\end{align}
Here we choose the parameters $W \gg 1$ and $N \gg 1$, $N$ even. The points $\omega_n \coloneqq n \delta -W$, $n=0,\dots,N-1$, where $\delta  \coloneqq 2W/N$. Substituting in the definition of $\omega_n$, we see that
\begin{align}
\frac{\delta}{2\pi} \sum_{n=0}^{N-1} u(\omega_n) \me^{\mi \omega_n x_j} 
= \frac{\delta N  \me^{-\mi Wx_j}}{2 \pi} \left [\frac{1}{N} \sum_{n=0}^{N-1} u(\omega_n) \me^{2 \mi x_j Wn/N } \right]. \label{eq:fft1}
\end{align}
Define the vector $\vectt{u}_n = u(\omega_{n-1})$, $n=1,\dots,N$. Moreover, we fix $x_j$ as 
\begin{align}
x_j \coloneqq (-N/2 + j-1) \frac{\pi}{W}, \;\; j = 1,\dots,N.
\end{align}
Then, the right-hand side of \cref{eq:fft1} is equal to
\begin{align}
\frac{\delta N  \me^{-\mi Wx_j}}{2 \pi} \left [\frac{1}{N} \sum_{n=1}^{N} \vectt{u}_n \me^{2\pi \mi   (j-1) (n-1)/N} \me^{-\mi \pi (n-1) } \right]. \label{eq:fft2}
\end{align}
By noting that $\me^{-\mi \pi (n-1) } = \me^{\mi \pi (n-1) }$  for $n \in \mathbb{Z}$, then for $1 \leq j \leq N/2$, \cref{eq:fft2} is equal to
\begin{align}
\frac{\delta N  \me^{-\mi Wx_j}}{2 \pi} \left [\frac{1}{N} \sum_{n=1}^{N} \vectt{u}_n \me^{2\pi\mi   (j+N/2-1) (n-1)/N } \right], \label{eq:fft3}
\end{align}
and for $N/2+1 \leq j \leq N$, \cref{eq:fft2} is equal to
\begin{align}
\frac{\delta N  \me^{-\mi Wx_j}}{2 \pi} \left [\frac{1}{N} \sum_{n=1}^{N} \vectt{u}_n \me^{2\pi \mi   (j-N/2-1) (n-1)/N } \right]. \label{eq:fft4}
\end{align}
Hence, by the definition of the shifted IDFT in \cref{eq:fft5}, we see that, for $j,n=1,\dots,N$,
\begin{align}
\mathcal{F}^{-1}[u](x_j) \approx \frac{\delta N  \me^{-\mi Wx_j}}{2 \pi} \hat{\mathcal{F}}_D^{-1}[\vectt{u}]_j,  
\end{align}
where $\vectt{u}_n = u(2W(n-1)/N - W)$, $n = 1,\dots,N$.

\section[Appended sum space]{Approximating the appended sum space functions}
\label{sec:app:appended-sum-space}
With a na\"ive approach, it would take four integrals per interval to approximate the additional functions found in the appended sum space. This setup cost may become prohibitive for a large number of intervals. However, the next proposition reveals that the additional functions on intervals that are translated (and not scaled) are simply translations of the additional functions associated to one reference interval. 

\begin{proposition}[Translations of the reference interval]
Consider the interval $I = [a, b]$ and its associated reference interval $I_R = [-(b-a)/2, (b-a)/2]$. Then,
\begin{align}
\begin{split}
v^{I}_0(x) &= v^{I_R}_0(x-(a+b)/2), \;\; 
\tilde{u}^{I}_{-1}(x) = \tilde{u}^{I_R}_{-1}(x-(a+b)/2), \\
v^{I}_1(x) &=v^{I_R}_1(x-(a+b)/2),\;\;
\tilde{u}^{I}_{0}(x) = \tilde{u}^{I_R}_{0}(x-(a+b)/2).
\end{split}
\end{align}
\end{proposition}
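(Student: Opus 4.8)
The plan is to reduce everything to the translation invariance of the operator $\mathcal{L}_{\lambda,\mu,\eta}$. Write $c \coloneqq (a+b)/2$ for the midpoint of $I$, and let $\tau_c$ denote the translation $(\tau_c g)(x) \coloneqq g(x-c)$; note that $I_R$ is centred at the origin with the same width $b-a$, so $I = I_R + c$ and the affine maps used to build the transformed functions on the two intervals differ \emph{only} by this shift. Concretely, $V^{I_R}_n(x) = V_n\big(2x/(b-a)\big)$, while $V^I_n(x) = V_n\big(2(x-c)/(b-a)\big) = V^{I_R}_n(x-c)$, so $V^I_n = \tau_c V^{I_R}_n$, and in the same way $\eU^I_n = \tau_c\,\eU^{I_R}_n$. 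In particular the right-hand sides of the defining relations \cref{eq:columns1}--\cref{eq:columns2} for the additional functions centred on $I$ are precisely the $\tau_c$-translates of those centred on $I_R$.

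First I would record that each of the four summands of $\mathcal{L}_{\lambda,\mu,\eta} = \lambda\mathcal{I} + \mu\Hil + \eta\fdx + (-\Delta)^{1/2}$ commutes with $\tau_c$: the identity and $\fdx$ do so trivially, and $\Hil$ and $(-\Delta)^{1/2}$ do so because each is a Fourier multiplier (see \cref{eq:fourier-hilbert} and \cref{eq:fourier-frac-laplace}) --- equivalently, a convolution-type operator with a translation-invariant kernel --- so the phase $\me^{-\mi c\omega}$ coming from $\mathcal{F}[\tau_c g](\omega) = \me^{-\mi c\omega}\mathcal{F}[g](\omega)$ passes straight through the symbol. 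Hence $\mathcal{L}_{\lambda,\mu,\eta}(\tau_c g) = \tau_c\big(\mathcal{L}_{\lambda,\mu,\eta} g\big)$ for every admissible $g$, and, since decay at $\pm\infty$ is preserved under translation, $\tau_c$ sends a decaying solution of $\mathcal{L}_{\lambda,\mu,\eta} w = \eU^{I_R}_n$ (respectively $= V^{I_R}_n$) to a decaying solution of $\mathcal{L}_{\lambda,\mu,\eta} w = \eU^{I}_n$ (respectively $= V^{I}_n$). Applying this to $v^{I_R}_0$, $\tilde{u}^{I_R}_{-1}$, $v^{I_R}_1$ and $\tilde{u}^{I_R}_0$ in turn gives the four claimed identities, provided the solution is unique.

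Uniqueness of the decaying solution is the one point needing care, since it can fail for certain parameter triples $(\lambda,\mu,\eta)$ (compare the discussion of standing waves following \cref{eq:strong-fourier}). The clean way around this is that, in the method, the additional functions are in fact \emph{constructed} through the inverse-Fourier-transform formulas \cref{eq:FeU-1}--\cref{eq:FVk} (and \cref{eq:uUk2}): since $V^I_n = \tau_c V^{I_R}_n$ and $\eU^I_n = \tau_c\,\eU^{I_R}_n$, the transforms of the right-hand sides acquire the factor $\me^{-\mi c\omega}$ relative to the reference case, the Fourier symbol of $\mathcal{L}_{\lambda,\mu,\eta}$ is unchanged, and $\mathcal{F}^{-1}\big[\me^{-\mi c\omega}\,\mathcal{F}[v^{I_R}_n]\big] = \tau_c v^{I_R}_n$. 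Thus the translation identities hold verbatim at the level at which these functions are defined, with no appeal to a uniqueness theorem; the affine-covariance (chain rule) argument already used in the proof of \cref{prop:maps} then confirms that the $\tau_c$-translate of the reference function is indeed the function attached to the affine map of $I$. The only real obstacle is this bookkeeping around well-posedness; the translation-invariance steps themselves are immediate.
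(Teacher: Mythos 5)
Your proposal is correct and, at its core, is the same argument as the paper's: the paper's proof simply invokes the Fourier shift identity $\mathcal{F}[f(\diamond+\alpha)](\omega)=\me^{\mi\alpha\omega}\mathcal{F}[f(\diamond)](\omega)$ together with a routine calculation, which is exactly your observation that $V^I_n=\tau_c V^{I_R}_n$, $\eU^I_n=\tau_c\eU^{I_R}_n$ produce only a phase factor that passes through the unchanged symbol of $\mathcal{L}_{\lambda,\mu,\eta}$ in the defining formulas \cref{eq:FeU-1}--\cref{eq:FVk}. Your preliminary detour through operator--translation commutation and uniqueness is unnecessary once you work, as you then do, directly at the level of the Fourier-transform construction of the additional functions.
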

\begin{proof}
The result follows from the identity $\mathcal{F}[f(\diamond + \alpha)](\omega) = \me^{\mi \alpha \omega} \mathcal{F}[f(\diamond)](\omega)$ and a routine calculation. 
\end{proof}

The additional functions on the reference interval are approximated with one integral each. Moreover, all intervals of the same width map to the same reference interval. Hence, the setup of the method only requires four integrals per interval of different width. In particular, if all the intervals have the same width, we only require four integrals in total for the setup, irrespective of the number of intervals used. 

\section{Special cases when $\lambda = 0$}
\label{sec:special-cases}

In this subsection we discuss the conditioning of $L^+$ for choices of $\mu$ and $\eta$ when $\lambda = 0$. The ill-conditioning is alleviated by removing the rows of $L^+$ associated with functions that are no longer in the range of $\mathcal{L}_{0,\mu,\eta} S_n^+$. 

\subsection{$\lambda=\mu=\eta=0$}
The first special case that we consider is $\lambda=\mu=\eta=0$. Here, \cref{eq:fpde} reduces to finding $u \in H^{1/2}(\mathbb{R})$ that satisfies
\begin{align}
(-\Delta)^{1/2}[u] = f. \label{eq:special2}
\end{align}
The first issue is that the matrix $L^+$, as constructed in \cref{eq:Ap}, is singular. This is due to three rows and one column whose entries are all zero. The three rows correspond to the dual sum space functions $\eU_{-2}(x)$, $\eU_{n+1}(x)$, and $V_{n+2}(x)$ which do not lie in the range of $(-\Delta)^{1/2} S_n^+(x)$. The column corresponds to the sum space function $\eT_0(x)$. Hence, we remove these rows and column from $L^+$ resulting in an $(2n+4) \times (2n+6)$ matrix. The second issue is that the additional functions computed to form the square matrix $L^+$ become problematic or redundant. By \cref{prop:halfLaplacian}, we have that $\tilde{u}_{j}(x) = W_j(x)$, $j \geq 0$ and $v_j(x) = \eT_j(x)$, $j \geq 1$. Hence, two of the additional functions are already included in the sum space and correspond to two columns that should be removed. At first glance, this is advantageous as this reduces $L^+$ to a  $(2n+4) \times (2n+4)$ matrix. However, computing the remaining additional functions $\tilde{u}_{-1}(x)$ and $v_0(x)$ poses a numerical difficulty. Although $\eU_{-1}, V_0 \in H^{-1/2}(\mathbb{R})$, since $\lambda = 0$, the Lax--Milgram theorem does not guarantee existence of solutions. Attempting to compute these two solutions by a forward and inverse Fourier transform fails. Hence, as discussed in \cref{sec:introduction}, there exist solutions that satisfy the Fourier multiplier reformulation \cref{eq:strong-fourier} but not \cref{eq:weak-form}. 
Ignoring these technical issues and na\"ively computing the inverse Fourier transform via \cref{def:fourier} does recover the non-decaying solutions:
\begin{align}
\tilde{u}_{-1}(x) &= 
\begin{cases}
-\arcsin(x) & |x| < 1,\\
-\mathrm{sgn}(x) \pi/2 & |x| \geq 1,
\end{cases}\\
v_0(x) &= 
\begin{cases}
\log(2) - \gamma & |x| < 1,\\
\log(2) - \gamma - \mathrm{arcsinh}(\sqrt{x^2-1}) & |x| \geq 1.
\end{cases}
\end{align}
However, as neither of these tend to zero as $|x| \to \infty$, they cannot live in $H^{1/2}(\mathbb{R})$.  Therefore, we choose to remove the columns associated with $\tilde{u}_{-1}(x)$ and $v_0(x)$ as well. Since all the additional functions have been removed, we are only required to consider the range of $(-\Delta)^{1/2} S_n(x)$. The range does not include $\eU_{-1}(x)$ and $V_0(x)$. Hence, these functions can be removed from the dual sum space (whilst still preserving the exact map $(-\Delta)^{1/2} : S_n \to {{\bf S}}^*_n$) and their corresponding rows from $L^+$. In summary, we remove the columns associated with $\eT_0(x)$, $v_0(x)$, $\tilde{u}_{-1}(x)$, $v_1(x)$, and $\tilde{u}_{0}(x)$, and the rows associated with $\eU_{-2}(x)$, $V_0(x)$, $\eU_{-1}(x)$, $V_{n+2}(x)$, and $\eU_{n+1}(x)$ reducing $L^+$ from a $(2n+7)\times(2n+7)$ matrix to a $(2n+2)\times(2n+2)$ matrix. We emphasize that the action of $(-\Delta)^{1/2}$ on $S_n(x)$ is still represented exactly by the reduced $L^+$. 

\subsection{$\lambda = \mu = 0$, $|\eta| > 0$}
We now consider where $\lambda = \mu = 0$ but $|\eta|>0$. This corresponds to finding $u \in  H^1(\mathbb{R})$ that satisfies
\begin{align*}
\left( \eta \fdx + (-\Delta)^{1/2} \right)[u]= f.
\end{align*}
This special case is similar to the previous one. Again $L^+$ is singular due to three rows and a column which contain only zeroes. The rows are associated with the dual sum space functions $\eU_{-2}(x)$, $\eU_{n+1}(x)$, and $V_{n+2}(x)$ which do not lie in the range of $(\eta \fdx + (-\Delta)^{1/2}) S_n^+(x)$ and the column corresponds to $\eT_0(x)$. Hence, we remove those three rows and column. Moreover, $\tilde{u}_{1}(x)$ and $v_0(x)$ cannot be computed via a forward and inverse Fourier transform due to the same issues as in the previous case. Hence, the columns associated with  $\tilde{u}_{1}(x)$ and $v_0(x)$ must also be removed. This implies that the rows associated with $V_0(x)$ and $\eU_{-1}(x)$ are now all zero and must also be removed. Hence, as in the previous case, we remove the columns associated with $\eT_0(x)$, $v_0(x)$, $\tilde{u}_{-1}(x)$, $v_1(x)$, and $\tilde{u}_{0}(x)$, and the rows associated with $\eU_{-2}(x)$, $V_0(x)$, $\eU_{-1}(x)$, $V_{n+2}(x)$, and $\eU_{n+1}(x)$ reducing $L^+$ from a $(2n+7)\times(2n+7)$ matrix to a $(2n+2)\times(2n+2)$ matrix. Moreover, the action of $(\fdx + (-\Delta)^{-1/2})$ on $S_n(x)$ is still represented exactly by the reduced $L^+$. 

\begin{remark}
Since we have removed the additional functions from the appended sum space, the setup cost is minimal when $\lambda=\mu=0$ and $\eta \in \mathbb{R}$ as no integrals are required.
\end{remark}

\subsection{$\lambda = 0$, $|\mu|>0$, $\eta \in \mathbb{R}$}
The final case to consider is when $\lambda = 0$ but $\mu \neq 0$. In this case there is only one zero row and column associated with $\eU_{-2}(x)$ and $\eT_0(x)$, respectively. Removing that row and column results in an invertible matrix that still represents the mapping exactly. Moreover, unlike the previous two cases, we recover $v_0(x), \tilde{u}_{-1}(x) \in H^{1/2}(\mathbb{R})$. However, we note that with increasing $n$ (truncation degree) $L^+$ has two singular values that quickly decrease to zero which impact the conditioning of $L^+$. If the problem setup requires high truncation degree $n$, and small parameter $|\mu| \ll 1$, then $L^+$ may become numerically singular. In this case, we suggest using the additional functions $v_{n+2}(x)$ and $\tilde{u}_{n+1}(x)$ instead of $v_1(x)$ and $\tilde{u}_{0}(x)$ as suggested in \cref{rem:stabiliseL}. This results in a well-conditioned $L^+$ irrespective of the choices of $n$ and $\mu$. 

\end{appendices}

\section*{Acknowledgements}
The authors would like to thank the anonymous reviewers for their insightful comments.

\clearpage

\printbibliography
\end{document}